\newtheorem{thm}{Theorem}[section]
\newtheorem*{thm*}{Theorem}
\newtheorem*{thm:tripleexp}{Theorem \ref{thm:tripleexp}}
\newtheorem*{thm:quasirandomness}{Theorem \ref{thm:quasirandomness}}
\newtheorem*{thm:strongquasirandom}{Theorem \ref{thm:strongquasirandom}}
\newtheorem{lem}[thm]{Lemma}
\newtheorem{cor}[thm]{Corollary}
\theoremstyle{definition}
\newtheorem{Def}[thm]{Definition}
\newtheorem*{rem}{Remark}
\newtheorem{numrem}[thm]{Remark}
\DeclareMathOperator\pr{Pr}
\DeclareMathOperator\ext{ext}
\DeclareMathOperator\codeg{codeg}
\newcommand\ol[1]{\overline{#1}}
\newcommand\up[1]{^{(#1)}}
\newcommand\wt[1]{\widetilde{#1}}
\newcommand\Q{\mathcal{Q}}
\newcommand\N{\mathbb{N}}
\newcommand\R{\mathbb{R}}
\newcommand\E{\mathbb{E}}
\renewcommand\P{\mathcal{P}}
\newcommand{\pardiff}[2]{\mathchoice{\frac{\partial #1}{\partial #2}}{\partial #1/\partial #2}{\partial #1/\partial #2}{\partial #1/\partial #2}}
\title{Ramsey Numbers of Books and Quasirandomness}
\author{David Conlon\thanks{Department of Mathematics, California Institute of Technology, Pasadena, CA 91125, USA. Email: {\tt dconlon@caltech.edu}. Research supported by ERC Starting Grant 676632 and NSF Award DMS-2054452.} \and Jacob Fox\thanks{Department of Mathematics, Stanford University, Stanford, CA 94305, USA. Email: {\tt jacobfox@stanford.edu}. Research supported by a Packard Fellowship and by NSF Career Award DMS-1352121.} \and Yuval Wigderson\thanks{Department of Mathematics, Stanford University, Stanford, CA 94305, USA. Email: {\tt yuvalwig@stanford.edu}. Research supported by NSF GRFP Grant DGE-1656518.}}
\date{}
\begin{document}
\maketitle
\begin{abstract}
	The \emph{book graph} $B_n \up k$ consists of $n$ copies of $K_{k+1}$ joined along a common $K_k$. The Ramsey numbers of $B_n \up k$ are known to have strong connections to the classical Ramsey numbers of cliques. Recently, the first author determined the asymptotic order of these Ramsey numbers for fixed $k$, thus answering an old question of Erd\H os, Faudree, Rousseau, and Schelp. In this paper, we first provide a simpler proof of this theorem. Next, answering a question of the first author, we present a different proof that avoids the use of Szemer\'edi's regularity lemma, thus providing much tighter control on the error term. Finally, we prove a conjecture of Nikiforov, Rousseau, and Schelp by showing that all extremal colorings for this Ramsey problem are quasirandom.
\end{abstract}

\section{Introduction}
Given two graphs $H_1$ and $H_2$, their \emph{Ramsey number} $r(H_1,H_2)$ is the minimum $N$ such that any red/blue coloring of the edges of the complete graph $K_N$ contains a red copy of $H_1$ or a blue copy of $H_2$. Ramsey's theorem asserts that $r(H_1,H_2)$ is finite for all graphs $H_1$ and $H_2$. In the special case where $H_1$ and $H_2$ are the same graph $H$, we write $r(H)$ rather than $r(H,H)$. Though Ramsey proved his theorem nearly a century ago, our understanding of the numbers $r(H)$ is still rather limited. Even for the basic case of identical cliques, the bounds ${\sqrt {2}}^r \leq r(K_r) \leq 4^r$ have remained almost unchanged since 1947 (more precisely, there have been no improvements to the exponential constants $\sqrt 2$ and $4$). 

One possible approach to improving the upper bound on $r(K_r)$ is as follows. Fix some $k<r$ and suppose we are given an edge coloring\footnote{For brevity, if not specified, we henceforth use the word ``coloring'' to refer to a red/blue edge coloring of a complete graph.} of $K_N$ with no monochromatic copy of $K_r$. Suppose some blue $K_k$ has at least $n=r(K_r,K_{r-k})$ extensions to a monochromatic $K_{k+1}$. Then, by the definition of $n$, among these $n$ vertices, we must find either a red $K_r$ or a blue $K_{r-k}$, which can be combined with our original blue $K_k$ to yield a blue $K_r$. This contradicts our assumption that the coloring has no monochromatic $K_r$ and, therefore, every monochromatic $K_k$ must have fewer than $n$ monochromatic extensions to a $K_{k+1}$. Equivalently, if we define the \emph{book graph} $B_n\up k$ to consist of $n$ copies of $K_{k+1}$ joined along a common $K_k$ (called the \emph{spine} of the book), then we have  shown that a coloring with no monochromatic $K_r$ must also not contain a monochromatic $B_n \up k$, where $n=r(K_r,K_{r-k})$. In other words,
\[
	r(K_r) \leq r(B_n \up k).
\]
Therefore, one could hope to improve the upper bound on $r(K_r)$ by finding good upper bounds on $r(B_n \up k)$. Such an approach, combined with other techniques coming from the theory of quasirandom graphs, was used by the first author \cite{Conlon2009} to obtain the first superpolynomial improvement to the upper bound of $4^r$.

These observations suggest that one should study the Ramsey numbers $r(B_n \up k)$. In fact, the study of $r(B_n \up k)$ implicitly goes back to Ramsey's original paper \cite{Ramsey}, where he proved the finiteness of $r(K_r)$ by inductively proving the finiteness of $r(B_n \up k)$. However, the modern study of book Ramsey numbers was initiated about four decades ago by Erd\H os, Faudree, Rousseau, and Schelp~\cite{ErFaRoSc} and by Thomason \cite{Thomason82}. Both papers prove a lower bound of the form
\[
	r(B_n \up k) \geq 2^k n-o_k(n),
\]
which follows by considering a uniformly random coloring of $K_N$; in such a coloring, a fixed monochromatic $K_k$ has, in expectation, $2^{-k}(N-k)$ monochromatic extensions and the desired bound follows from applying the Chernoff bound and then the union bound. Alternatively, one can check (e.g.,\ \cite[Theorem 6(i)]{Thomason82}) that a Paley graph of order $q=2^k(n-\Omega_k(\sqrt n))$ contains no $B_n \up k$, yielding a lower bound of the same form. 

Erd\H os, Faudree, Rousseau, and Schelp \cite{ErFaRoSc} asked whether this lower bound or a simple upper bound of the form $4^k n$ is asymptotically tight, while Thomason \cite{Thomason82} conjectured that the lower bound should be asymptotically correct. In fact, he made the stronger conjecture that, for all $n$ and $k$,
\[
	r(B_n \up k) \leq 2^k(n+k-2)+2.
\]
If true, this would yield a huge improvement on the upper bound for $r(K_r)$. Indeed, $B_1^{(r-1)} = K_r$, so we would immediately have $r(K_r) \leq r 2^{r-1}$. In fact, if one could prove that $r(B_n \up k)$ asymptotically matches the lower bound given by the random construction \emph{and} one had sufficiently strong control on the error term in this asymptotic, one might hope for an exponential improvement on $r(K_r)$. The first part of this plan was carried out by the first author~\cite{Conlon}, who answered the question of Erd\H os, Faudree, Rousseau, and Schelp and proved an approximate version of Thomason's conjecture. 
\begin{thm}[Conlon \cite{Conlon}]\label{thm:conlon}
	For any $k \geq 1$,
	\[
		r(B_n \up k)= 2^k n+o_k(n).
	\]
\end{thm}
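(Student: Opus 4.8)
The lower bound $r(B_n\up k)\geq 2^kn-o_k(n)$ is the one recalled in the introduction, coming from a uniformly random colouring (or a Paley graph of order $\approx 2^kn$), so the whole task is the matching upper bound: for every $\epsilon>0$ and all large $n$, any red/blue colouring of $K_N$ with $N\geq(2^k+\epsilon)n$ contains a monochromatic $B_n\up k$. My plan is to prove this by the regularity method. Apply Szemer\'edi's regularity lemma to the blue graph, producing an equitable partition $V_1,\dots,V_M$ into parts of size $\approx N/M$ in which all but an $\epsilon'$-fraction of the pairs are $\epsilon'$-regular; write $d_{ij}$ for the blue density of $(V_i,V_j)$, so that $1-d_{ij}$ is its red density.

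The first real step is to translate the absence of a monochromatic book into a condition on this reduced data. Call a $k$-tuple of parts $V_{i_1},\dots,V_{i_k}$ a \emph{blue reduced $K_k$} if every pair among them is $\epsilon'$-regular with blue density at least $\sqrt{\epsilon'}$. For such a tuple, the counting and embedding lemmas show that almost every transversal $(u_1,\dots,u_k)$, with $u_\ell\in V_{i_\ell}$, spans a blue $K_k$, and that for almost all of these the blue common neighbourhood has size $(1\pm o(1))\frac{N}{M}\sum_{j}\prod_{\ell=1}^k d_{i_\ell j}$. Since there is no blue $B_n\up k$, this forces
\[
	\sum_{j}\prod_{\ell=1}^k d_{i_\ell j}<(1+o(1))\frac{Mn}{N}
\]
for every blue reduced $K_k$, and symmetrically $\sum_j\prod_\ell(1-d_{i_\ell j})<(1+o(1))\frac{Mn}{N}$ for every red reduced $K_k$.

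It then suffices to prove a purely weighted statement about the reduced graph: if $w\colon\binom{[M]}{2}\to[0,1]$ has the property that every $k$-set $S$ with $w_{ij}\geq\delta$ throughout satisfies $\sum_j\prod_{i\in S}w_{ij}<T$ and every $k$-set with $w_{ij}\leq1-\delta$ throughout satisfies $\sum_j\prod_{i\in S}(1-w_{ij})<T$, then $M\leq(2^k+o_\delta(1))T$. Feeding in $w=(d_{ij})$ and $T=(1+o(1))\frac{Mn}{N}$ then yields $N\leq(2^k+o(1))n$, as required. The heuristic behind the weighted statement is a convexity computation: writing $p$ for the overall blue density and $\bar d_j$ for the average of $d_{ij}$ over $i$, one has $\E_{i_1,\dots,i_k}\bigl[\sum_j\prod_\ell d_{i_\ell j}\bigr]=\sum_j\bar d_j^{\,k}\geq Mp^k$, so some $k$-tuple has blue ``expansion'' at least $Mp^k\geq M/2^k$ (taking $p\geq\tfrac12$ without loss of generality).

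The crux -- and the step I expect to be the main obstacle -- is that the $k$-tuple handed to us by this averaging need not be a blue reduced $K_k$: if the blue graph is (nearly) complete bipartite there is no blue reduced $K_k$ at all, and one is forced over to the red side. What one genuinely needs is that \emph{either} some blue reduced $K_k$ has blue expansion $\geq(1-o(1))M/2^k$ \emph{or} some red reduced $K_k$ has red expansion $\geq(1-o(1))M/2^k$. I would establish this by a cleanup-and-stability argument: after discarding the irregular pairs and the pairs whose density lies within $\delta$ of $0$ or $1$, the surviving reduced colouring is a genuine red/blue colouring of a complete graph on $(1-o(1))M$ parts; if neither colour class contains a reduced $K_k$ of the required expansion, then the colouring must be close to an extremal configuration, which forces the part-densities $\bar d_j$ to concentrate near $\tfrac12$ and the bound $M\leq(2^k+o(1))T$ to hold outright. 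It is precisely this stability input that ties the problem to quasirandomness, and hence to the later theorems of the paper.
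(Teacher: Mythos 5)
Your route is genuinely different from the paper's. After applying regularity, you look for a \emph{pure} monochromatic reduced clique -- a $k$-tuple of parts that is all blue-dense or all red-dense -- with high expansion. The paper instead finds what it calls a \emph{good configuration}: $k$ parts $C_1,\ldots,C_k$ whose internal \emph{red} density is bounded away from $0$ and whose pairwise \emph{blue} densities are bounded away from $0$. This mixed structure is produced directly (Tur\'an's theorem in a reduced graph restricted to the ``red'' parts, after handling a low-degree vertex by hand), and then the analytic inequality
\[
\prod_{i=1}^k x_i + \frac 1k\sum_{i=1}^k(1-x_i)^k \;\geq\; 2^{1-k},
\]
applied with $x_i(v)=d_B(v,C_i)$ and summed over all $v$, shows that either a random blue transversal $K_k$ of $C_1,\ldots,C_k$ or a random red $K_k$ inside some single $C_i$ has at least $2^{-k}N$ monochromatic extensions in expectation. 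The two terms on the left correspond exactly to the two possible kinds of book, so the paper never faces the dichotomy you describe.

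The step you yourself flag as ``the main obstacle'' is a genuine gap, and it is where the whole difficulty of the theorem lives. Your averaging $\sum_j \bar d_j^{\,k}\geq Mp^k$ is an average over \emph{all} $k$-tuples of parts, not over blue reduced $K_k$'s, so the tuple witnessing the average can fail to be a blue reduced $K_k$, in which case the counting lemma gives nothing; the symmetric remark holds on the red side. Your proposed repair -- a ``cleanup-and-stability argument'' asserting that if neither colour supplies a reduced $K_k$ of the required expansion then the densities $\bar d_j$ concentrate near $\tfrac12$ -- is not a proof, and I do not think it can be made one without essentially re-deriving the theorem. Notice that your weighted statement, specialised to $\{0,1\}$-valued $w$, \emph{is} the book Ramsey upper bound you set out to prove: it says that in any $2$-colouring of $K_M$ in which every blue $K_k$ has fewer than $T$ blue common neighbours and every red $K_k$ has fewer than $T$ red common neighbours, one must have $M\leq(2^k+o(1))T$. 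Passing to a weighted reduced graph has therefore not converted the problem into a convexity computation; the convexity step only handles the case where essentially all pair densities lie in a bounded window around $\tfrac12$, and the difficult regime is exactly the one you wave at with ``stability.'' (The paper's quasirandomness theorems do express a closely related stability statement, but they are proved \emph{using} good configurations and the analytic lemma above, so invoking them here would be circular.) The hybrid good configuration is precisely the device that replaces the stability input by a one-line inequality, which is why the paper's proof closes and yours, as written, does not.
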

Unfortunately, the error term $o_k(n)$ decays extremely slowly. More specifically, to obtain the upper bound $2^k n+\varepsilon n$ for some $\varepsilon>0$, the argument in \cite{Conlon} requires $n$ to be at least a tower of twos whose height is a function of $k$ and $1/\varepsilon$. The first author raised the natural question of whether such a dependence is necessary. Our first main result shows that it is not.
\begin{thm}\label{thm:tripleexp}
	For any $k \geq 3$,
	\[
		r(B_n \up k) 
		= 2^k n+O_k\left(\frac{ n}{(\log \log \log n)^{1/25}}\right).
	\]
\end{thm}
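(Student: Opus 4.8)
The matching lower bound is classical (a random colouring, or a Paley graph) and comes with an error of size $O_k(\sqrt{n\log n})$, far smaller than what is claimed, so I concentrate on the upper bound. I recast it as the statement that every red/blue colouring $\chi$ of $K_N$ contains a monochromatic $K_k$ together with at least $2^{-k}N-O_k\bigl(N/(\log\log\log N)^{1/25}\bigr)$ common neighbours in the colour of that clique: the $K_k$ is the spine of a book and the common neighbours are the pages, and passing between $N$ and $n$ via $N\approx 2^kn$ is routine. The heuristic is that a colouring in which every monochromatic $K_k$ has few monochromatic common neighbours must be quasirandom: if the red graph $G$ of $\chi$ has edge density $p$ and $G$ is quasirandom, then the numbers of monochromatic $K_k$'s and $K_{k+1}$'s are forced up to lower-order terms, so the average — hence the maximum — number of same-colour common neighbours over monochromatic $K_k$'s of the majority colour is $\approx\max(p,1-p)^kN\ge 2^{-k}N$. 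The plan is therefore a dichotomy: either $\chi$ is quasirandom in a quantitative sense, in which case a direct double count finishes, or it is not, and one must still locate a monochromatic book of the required size — and it is this second horn, replacing the use of Szemer\'edi's regularity lemma in Conlon's proof, that is the heart of the matter.

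For the first horn, say $\chi$ is $\eta$-quasirandom if the codegree sequence of $G$ concentrates around $p^2N$ (equivalently, its $C_4$-count is near-extremal). Then the number of monochromatic $K_j$'s is $\bigl(p^{\binom j2}+(1-p)^{\binom j2}\bigr)\binom Nj+O_k(\eta^{c_k}N^j)$ for $j\in\{k,k+1\}$ and some constant $c_k>0$ — the usual counting lemma for quasirandom graphs, with polynomial loss in $\eta$ — so dividing the $(k+1)$-count by the $k$-count within the majority colour shows the average, and hence the maximum, monochromatic codegree over monochromatic $K_k$'s is at least $2^{-k}N-O_k(\eta^{c_k}N)$. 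Choosing $\eta$ a small power of $1/\log\log\log N$ makes this error of the advertised shape, so the theorem reduces to the contrapositive: a colouring with no monochromatic book of size $2^{-k}N-O_k(\eta^{c_k}N)$ must be $\eta$-quasirandom.

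This quantitative de-quasirandomisation — the step handled in Conlon's proof by regularity, at the cost of a tower-type error — is the crux, and I would attack it by an increment scheme. One maintains a vertex set $U$, initially all of $V$, and while $\chi[U]$ fails to be $\eta$-quasirandom one uses the witnessing discrepancy to pass to a substructure on which a potential — built from $|U|$ together with the $k$-th power of the majority density of $\chi[U]$, normalised so that its initial value is $N$ and any terminal value $\ge N$ already exhibits a book of the right size inside $\chi[U]$ — does not decrease. The difficulty, and the reason the final bound is so weak, is that passing to a subset shrinks the ground set while one cannot afford to lose a constant factor in $N$: one must squeeze out of each discrepancy a subset of nearly full density together with a genuine density gain, iterate, and bound the number of iterations. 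The few nested loops needed each cost roughly an exponential in the relevant parameters, so attaining quasirandomness with parameter $\eta$ forces $N$ to be triply exponential in a polynomial in $1/\eta$, i.e.\ $\eta\gtrsim(\log\log\log N)^{-\Theta(1)}$; chasing the accumulated polynomial losses through the loops is what pins down the exponent $1/25$, and is the main obstacle.

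With both horns in hand the rest is assembly: fix $\eta$ as a function of $N$, run the increment scheme to reduce to the $\eta$-quasirandom case (on a subset $U$ whose potential still exceeds $N$), apply the counting lemma to produce a monochromatic $K_k$ with at least $2^{-k}N-O_k\bigl(N/(\log\log\log N)^{1/25}\bigr)$ same-colour common neighbours, and read off the Ramsey bound. As a sanity check, the theorem is stated only for $k\ge 3$: when $k=2$, Goodman's bound on monochromatic triangles already gives $r(B_n\up{2})\le 4n+O(1)$ with no quasirandomness needed, whereas for $k\ge 3$ the naive clique-multiplicity statement (for $K_{k+1}$) is false by Thomason's construction, so one genuinely has to exploit the strength of the ``no large monochromatic book'' hypothesis — which is precisely the content of the companion result that all (near-)extremal colourings for this problem are quasirandom.
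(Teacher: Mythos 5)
Your first horn is fine as far as it goes: in a coloring whose majority-color graph is quasirandom with parameter $\eta$, the standard counting lemma (with polynomial loss in $\eta$) forces the average number of same-color extensions over majority-color $K_k$'s to be at least $2^{-k}N - O_k(\eta^{c_k})N$, and the lower bound is classical. The problem is the second horn, which you yourself identify as ``the crux'' and ``the main obstacle'' and then do not prove. The statement you need --- that a coloring with no monochromatic book of size $2^{-k}N - o(N)$ can be made $\eta$-quasirandom, with $\eta$ only a power of $1/\log\log\log N$, via an increment scheme on a potential of the form $|U|\,(2p_U)^k$ --- is precisely the content of the theorem beyond Theorem \ref{thm:conlon}, and nothing in the proposal establishes it. The specific missing lemma would have to say: if $\chi[U]$ is not $\eta$-quasirandom, one can pass to $U'\subseteq U$ of \emph{nearly full} size whose majority density gain compensates the loss $|U'|/|U|$ in the potential, with a progress measure bounding the number of iterations. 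A discrepancy witness gives no such thing in general: it produces sets of size only $\Omega(\eta|U|)$ with density gain $\Omega(\eta)$, and (for instance when the majority-color graph is regular) deleting a $\delta$-fraction of vertices can raise the density by at most $O(\delta)$, so the potential gain and loss are of the same order and the naive increment does not close. Since the pages of the book must be counted against the original $N$, you cannot absorb even one constant-factor loss in $|U|$, which is exactly why this route is hard; asserting that ``chasing the accumulated polynomial losses'' yields the exponent $1/25$ is not a proof, because no loop, no increment lemma, and no iteration count are actually specified.

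For contrast, the paper does not run a density increment at all. It partitions $V$ into parts each $\eta$-regular \emph{with itself} (Lemma \ref{lem:partition}, built on the Conlon--Fox regular-subset lemma, costing only a double exponential), takes $R$ to be the union of the red-majority parts, and splits on the number of blue $K_k$ spanned by $R$. If there are many, the Duke--Lefmann--R\"odl cylinder regularity lemma (Lemma \ref{dlr}) extracts a great configuration (Lemma \ref{goodconfig}), and the analytic inequality of Lemma \ref{xilemma}, summed over \emph{all} $N$ vertices, produces the book (Lemma \ref{lem:goodconfigsuffices}); note this is how the paper counts pages against the full vertex set without ever needing global quasirandomness. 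If there are few, an induction over clique sizes $r=k,k-1,\dots,2$ (Lemmas \ref{lowerreg}, \ref{extcount}, \ref{lem:kr-induction}) either finds a red book directly or passes to a large subset with few blue $K_{r-1}$, terminating in a nearly red-complete set. The triple-exponential bound comes from the choice $\eta=\zeta^{2k^2\zeta^{-5}}$ and the Conlon--Fox-type bound $N\geq 2^{1/\eta^{(10/\eta)^{15}}}$, not from an increment accounting. Your global dichotomy is closer in spirit to the quasirandomness results of Section \ref{sec:quasirandomness} (Theorems \ref{thm:quasirandomness} and \ref{thm:strongquasirandom}), but those are proved there with tower-type dependence, and the paper explicitly declines to make them regularity-free; so they cannot be invoked as a black box for the error term claimed here.
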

\noindent That is, if one wishes to obtain the upper bound $2^k n+\varepsilon n$, then one ``only'' needs $n$ to be triple exponential in $1/\varepsilon$. While this eliminates the tower-type dependence of Theorem \ref{thm:conlon}, it is still far from strong enough to give an exponential improvement to $r(K_r)$ via the approach outlined above.

A second major direction of research in graph Ramsey theory regards the structure of Ramsey colorings, that is, colorings of $K_N$ with no monochromatic $K_r$, where $N$ is ``close'' to the Ramsey number $r(K_r)$. More specifically, we say that an edge coloring of $K_N$ is \emph{$C$-Ramsey} if it contains no monochromatic $K_r$ with $r \geq C \log N$. Since our best lower bounds for $r(K_r)$ come from random constructions, there have been many attempts to show that such $C$-Ramsey colorings exhibit properties that are typical for random colorings. For instance, Erd\H os and Szemer\'edi \cite{ErSzem} proved that such colorings must have both red and blue densities bounded away from $0$; Pr\"omel and R\"odl \cite{PrRo} proved that both the red and blue graphs contain induced copies of all ``small'' graphs (see also \cite{FoSu} for a simpler proof with better bounds); Jenssen, Keevash, Long, and Yepremyan~\cite{JKLY} proved that both the red and blue graphs contain induced subgraphs exhibiting vertices with $\Omega(N^{2/3})$ distinct degrees; and Kwan and Sudakov \cite{KwSu} proved that both the red and blue graphs contain $\Omega(N^{5/2})$ induced subgraphs with distinct numbers of vertices or edges.

Following Chung, Graham, and Wilson~\cite{ChGrWi}, themselves building on work of Thomason~\cite{Thomason87}, we say that an edge coloring of $K_N$ is  \emph{$\theta$-quasirandom} if, for any pair of disjoint vertex sets $X$ and $Y$,
\[
	\left|e_B(X,Y)- \frac 12 |X||Y| \right| \leq \theta N^2,
\]
where $e_B(X,Y)$ denotes the number of blue edges between $X$ and $Y$. Note that since the colors are complementary, we could just as well have used red edges. The importance of this definition is that, for $\theta$ sufficiently small, it implies that the graph has other natural random-like properties, such as that of containing roughly the ``correct'' number of monochromatic copies of all graphs of any fixed order. In light of the research described above, it is natural to ask whether Ramsey colorings are quasirandom. Unfortunately, this is not the case, as may easily be seen by considering the disjoint union of two copies of a $C$-Ramsey coloring and making all edges between them blue, as this coloring is $2C$-Ramsey and is not quasirandom. Nevertheless, S\'os~\cite{Sos} conjectured that true extremal colorings are quasirandom. More precisely, she conjectured that for every $\theta>0$ there is some $r_0$ such that, for all $r \geq r_0$, all edge colorings on $r(K_r)-1$ vertices with no monochromatic $K_r$ are $\theta$-quasirandom. 

A proof of S\'os's conjecture seems completely out of reach at present, if only because it would seem to require an asymptotic determination of the Ramsey number $r(K_r)$. However, an analogous conjecture for book Ramsey numbers was made by Nikiforov, Rousseau, and Schelp \cite{NiRoSc} and, given that we now understand the asymptotic behavior of $r(B_n \up k)$ for all fixed $k$, we might hope that this conjecture is within range. For $k = 2$, where the asymptotic behavior has been long known~\cite{RoSh}, the conjecture was proved by Nikiforov, Rousseau, and Schelp \cite{NiRoSc} themselves. Our second main result establishes their conjecture in full generality. 

\begin{thm}\label{thm:quasirandomness}
	For any $k \geq 2$ and any $0<\theta<\frac 12$, there is some $c=c(\theta,k)>0$ such that if a $2$-coloring of $K_N$ is {not} $\theta$-quasirandom for $N$ sufficiently large, then it contains a monochromatic $B_n\up k$ with $n=(2^{-k}+c)N$. 
\end{thm}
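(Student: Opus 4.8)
The first step is an easy reduction from the failure of quasirandomness to a local density deviation. If the coloring is not $\theta$-quasirandom, fix disjoint $X,Y$ with $\left|e_B(X,Y)-\tfrac12|X||Y|\right|>\theta N^2$; after swapping the two colors if necessary, we may assume $e_B(X,Y)>\tfrac12|X||Y|+\theta N^2$. Since $X$ and $Y$ are disjoint, $|X||Y|\le N^2/4$, so blue has density at least $\tfrac12+4\theta$ between $X$ and $Y$, while $|X|,|Y|>\theta N$. Equivalently --- splitting $e_B(X\cup Y)$ into its three parts --- one can pass to a single vertex set $W$ with $|W|=\Omega_\theta(N)$ on which one color, say blue, has density at least $\tfrac12+\Omega_\theta(1)$. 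We work with such a deviation from now on and aim to produce a blue book $B_n\up k$ with $n=(2^{-k}+c)N$; the case in which red is the dense color is handled symmetrically.

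The plan for turning this density deviation into an oversized book is to run the argument behind Theorem \ref{thm:tripleexp} in a \emph{stability} mode. Recall its mechanism: a monochromatic $B_m\up k$ with a prescribed vertex $v$ in its spine is exactly a same-colored $B_m\up{k-1}$ inside the corresponding link of $v$, so, deleting one spine vertex at a time, the problem descends through $k$ nested monochromatic neighborhoods to the trivial case $k=1$ (the largest monochromatic star, which always has at least $(N-1)/2$ pages). At each step of the descent one passes from a monochromatic $K_j$ with monochromatic common neighborhood $U$ to the next level by choosing a vertex of $U$ of maximum same-colored degree within $U$; as long as that color stays in the majority on $U$, this costs at most a factor $2$ in the size of the surviving common neighborhood, and costs \emph{exactly} a factor $2$ only when the color is perfectly balanced on $U$ and its degree sequence there is perfectly regular. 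Hence, if the final book has fewer than $(2^{-k}+c)N$ pages, then at each of the $k$ levels the relevant color densities and codegrees must be balanced to within an error tending to $0$ with $c$; feeding these balance conditions through the standard Chung--Graham--Wilson equivalences \cite{ChGrWi} --- between the bipartite-discrepancy definition of quasirandomness and the degree- and codegree-regularity conditions that the descent naturally controls --- shows the coloring is $\theta$-quasirandom, contradicting our hypothesis. The blue density deviation on $W$ is exactly what lets one descent step \emph{gain} a constant factor rather than merely lose at most a factor $2$.

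Two points need care, and the second is the main obstacle. First, the descent does not commute with recoloring: once one has passed into a blue neighborhood, only blue books can be tracked below it, so one cannot simply switch to the current majority color midway. Matching the blue density surplus on $W$ to a genuinely \emph{blue} clique --- rather than the red clique that a naive application of Theorem \ref{thm:conlon} inside $W$ would return --- therefore requires an exchange step, using that a blue surplus on $W$ is a red deficit on $W$ and choosing which color to chase at each level accordingly. Second, and more seriously, Theorem \ref{thm:tripleexp} shows that the error in $r(B_n\up k)=2^kn+o(n)$ decays only like $(\log\log\log n)^{-1/25}$, reflecting the fact that the nested-neighborhood descent is genuinely delicate: the balance at successive levels can only be pinned down by passing to smaller and smaller subsets, and the loss compounds. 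Extracting from this a clean implication ``no monochromatic $B_{(2^{-k}+c)N}\up k$ $\Rightarrow$ $\theta$-quasirandom'' with $c$ depending only on $\theta$ and $k$ --- that is, a genuine constant-sized gain rather than one that itself decays as $N\to\infty$ --- is where the bulk of the work lies.
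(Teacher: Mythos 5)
There is a genuine gap, and it sits exactly where you say ``the bulk of the work lies'': the proposal never closes it. Your opening reduction is fine (non-quasirandomness does yield a linear-sized set $W$ on which one color has density at least $\tfrac12+\Omega_\theta(1)$), but the engine you propose to convert this into a book with $(2^{-k}+c)N$ pages does not exist. The nested-neighborhood descent you describe is not ``the mechanism behind Theorem \ref{thm:tripleexp}'': a greedy descent that picks a vertex of maximum same-colored degree loses a factor $2$ per step \emph{only if the color being chased remains in the majority at every level}, and nothing forces this; when the majority color flips, the loss per step in the fixed color is unbounded, which is precisely why the naive argument only gives the $4^k n$ bound mentioned in the introduction, not $2^k n$. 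Theorems \ref{thm:conlon} and \ref{thm:tripleexp} are instead proved via regularity-type partitions, good/great configurations, Corollary \ref{cor:randomclique}, and the analytic inequality of Lemma \ref{xilemma}, in which the $2^{1-k}$ arises from optimizing $\prod x_i+\frac1k\sum(1-x_i)^k$, not from $k$ successive halvings. Consequently the key assertion of your second paragraph --- that the absence of a $B^{(k)}_{(2^{-k}+c)N}$ forces near-balance ``at each level'' and hence, via Chung--Graham--Wilson, $\theta$-quasirandomness --- is exactly the statement to be proved, and the proposal offers no mechanism for it: the vague ``exchange step'' for choosing which color to chase is not an argument, and degree/codegree balance along one greedy descent path does not control discrepancy on arbitrary pairs $(X,Y)$.

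For comparison, the paper proves the contrapositive of the stronger Theorem \ref{thm:strongquasirandom}: assuming there are few monochromatic $K_k$ with many monochromatic extensions, it uses a stability version of Lemma \ref{xilemma} (Lemma \ref{stablexilemma}) to show that any good configuration must be $(\tfrac12,\varepsilon)$-regular to the whole vertex set (Lemmas \ref{densityhalf} and \ref{lem:regularityofgoodconfig}), then iteratively extracts such sets (Lemma \ref{lem:pull-out-regular}) until the graph is almost partitioned into pieces each $(\tfrac12,\varepsilon^2)$-regular to $V$, and finally shows that such a partition forces $\theta$-quasirandomness (Lemma \ref{lem:partition-implies-quasirandomness}). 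None of these ingredients appear in your sketch. You also do not address $k=2$, where the stability inequality of Lemma \ref{stablexilemma} is false (the minimum is attained along the whole line $x_1+x_2=1$) and the paper needs a separate argument based on Goodman's formula; any correct proof must either treat this case specially or explain why its method survives it.
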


Theorem \ref{thm:quasirandomness} will follow from the following stronger result, which says that in a non-quasirandom coloring, a constant fraction of the monochromatic $K_k$ form the spine of one of these large books.

\begin{thm}\label{thm:strongquasirandom}
	For any $k \geq 2$ and any $0<\theta<\frac 12$, there is some $c_1=c_1(\theta,k)>0$ such that if a $2$-coloring of $K_N$ is not $\theta$-quasirandom for $N$ sufficiently large, then it contains at least $c_1 N^k$ monochromatic $K_k$, each of which has at least $(2^{-k}+c_1)N$ extensions to a monochromatic $K_{k+1}$.
\end{thm}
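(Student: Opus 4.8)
\textbf{Proof proposal for Theorem \ref{thm:strongquasirandom}.}

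The plan is to leverage the failure of $\theta$-quasirandomness to locate a large, highly biased bipartite pattern in the coloring, and then to find within that pattern a positive-density family of monochromatic $K_k$'s each with abnormally many extensions. Concretely, since the coloring is not $\theta$-quasirandom, there exist disjoint vertex sets $X, Y$ with $\left|e_B(X,Y) - \frac12 |X||Y|\right| > \theta N^2$; without loss of generality $e_B(X,Y) > \frac12|X||Y| + \theta N^2$, so the blue bipartite graph between $X$ and $Y$ has density at least $\frac12 + \theta'$ for some $\theta' = \theta'(\theta) > 0$, and moreover $|X|, |Y| \geq \theta N$. The first step is a cleaning/defect argument: by a convexity or averaging argument, we can pass to subsets $X' \subseteq X$, $Y' \subseteq Y$ of size $\Omega_\theta(N)$ such that \emph{every} vertex of $X'$ sends blue edges to at least a $(\frac12 + \theta'')$-fraction of $Y'$, and vice versa. (This is the standard ``move the defect to one side'' manoeuvre; it costs only a constant factor in the set sizes.)

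The second step is to build the books. Inside $Y'$, we want to find many copies of $K_{k-1}$ in blue that extend (using one more vertex from $X'$) to a blue $K_k$ whose blue extension count into the rest of $Y'$ is large. The key numerical point: if $W \subseteq Y'$ is a blue clique of size $k-1$ and $v \in X'$ is blue-joined to all of $W$, then the common blue neighbourhood of $W \cup \{v\}$ inside $Y'$ has expected size $\geq (\frac12 + \theta'')^{k-1} \cdot \frac12 \cdot |Y'| = (2^{-k} + \delta)|Y'| = (2^{-k} + \delta')N$ for suitable constants $\delta, \delta' > 0$ depending only on $\theta, k$ — this beats the $2^{-k}N$ threshold by a constant. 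To make this rigorous and to get $\Omega(N^k)$ such $K_k$'s rather than just one, I would count: the number of pairs $(v, W)$ with $v \in X'$, $W$ a blue $(k-1)$-clique in the blue neighbourhood $N_B(v) \cap Y'$, is at least $|X'|$ times a supersaturation bound for $K_{k-1}$'s in a graph on $\Omega(N)$ vertices with density $\geq \frac12 + \theta''$; by the Kruskal–Katona theorem or simple convexity, this is $\Omega_{\theta,k}(N^k)$. For each such $(v,W)$, $W \cup \{v\}$ is a blue $K_k$. The issue is double-counting (different pairs giving the same $K_k$) and ensuring a constant fraction of these $K_k$'s genuinely have $(2^{-k}+c_1)N$ blue extensions: handle the former by noting each $K_k$ arises from at most $k$ pairs, and the latter by a Markov-type argument — since the \emph{average} extension count over these $K_k$'s exceeds $2^{-k}N$ by a constant, at least a constant fraction individually exceed $2^{-k}N$ by a (smaller) constant, because the extension count is trivially bounded above by $N$.

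The main obstacle I anticipate is making the density increments multiply cleanly across the $k-1$ vertices of $W$ while simultaneously retaining $\Omega(N^k)$ copies: a naive greedy selection of $W$ vertex-by-vertex loses density control, so the cleaning in Step 1 must be done carefully enough that \emph{every} sufficiently large subset of $Y'$ still has blue density $\geq \frac12 + \theta''$ in the relevant induced/neighbourhood subgraphs. One clean way around this is to first replace $Y'$ by a further subset on which the blue graph is ``everywhere dense'' (e.g., using a dependent random choice or a min-degree iteration), and then use a supersaturation count for blue $K_k$'s with one vertex in $X'$ directly, rather than building $W$ greedily. A secondary technical point is that $\theta < \frac12$ is needed so that the excess density $\theta'$ is genuinely positive and bounded away from $0$; tracking the chain of constants $\theta \to \theta' \to \theta'' \to \delta \to c_1$ is routine but must be done to confirm $c_1 = c_1(\theta, k) > 0$. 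Finally, Theorem \ref{thm:quasirandomness} follows immediately from Theorem \ref{thm:strongquasirandom} by taking any one of the $c_1 N^k \geq 1$ guaranteed monochromatic $K_k$'s and using its $(2^{-k} + c_1)N \geq (2^{-k} + c)N$ extensions to form the book $B_n^{(k)}$.
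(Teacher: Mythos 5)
There is a genuine gap, and it sits at the heart of your numerical step. Non-quasirandomness only hands you disjoint sets $X,Y$ of size $\Omega_\theta(N)$ with a \emph{bipartite} blue-density boost between them; it gives no control over the blue density inside $Y'$, nor over the densities from your clique to the rest of the graph $V \setminus (X' \cup Y')$. Two consequences. First, your supersaturation count of blue $K_{k-1}$'s inside $N_B(v)\cap Y'$ is unjustified: the blue graph induced on $Y'$ could be empty (all $Y$-internal edges red), so for $k \geq 3$ there may be no blue $(k-1)$-cliques in $Y'$ at all; the density $\geq \frac 12+\theta''$ you invoke is a density between $X'$ and $Y'$, not a density of the graph in which you are counting cliques. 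Second, and more fundamentally, the theorem demands $(2^{-k}+c_1)N$ extensions measured against the full vertex count $N$, whereas your estimate produces $(2^{-k}+\delta)|Y'|$; the identification $(2^{-k}+\delta)|Y'| = (2^{-k}+\delta')N$ silently assumes $|Y'|$ is nearly all of $N$, and when $|X|,|Y|$ are only of order $\theta N$ with $\theta$ small this falls far below the $2^{-k}N$ threshold. To beat $2^{-k}N$ you must count extensions into all of $V$, where the blue densities from your clique's vertices are uncontrolled and may be negligible --- in which case the theorem is rescued only because a \emph{red} book must then appear, and your proposal has no mechanism for this red/blue trade-off.

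That trade-off is exactly what the paper's argument is built around: the inequality $\prod_i x_i + \frac 1k \sum_i (1-x_i)^k \geq 2^{1-k}$ of Lemma \ref{xilemma} and its stability version, Lemma \ref{stablexilemma} (which in fact fails for $k=2$, forcing a separate argument via Goodman's formula in that case), applied to the densities $x_i(v)=d_B(v,C_i)$ from \emph{every} vertex $v\in V$ to the parts of a good configuration. The paper then proves the contrapositive: if fewer than $c_1N^k$ monochromatic $K_k$ have $(2^{-k}+c_1)N$ extensions, every good configuration must be $(\frac 12,\varepsilon)$-regular to the whole vertex set (Lemmas \ref{densityhalf} and \ref{lem:regularityofgoodconfig}), such pieces can be pulled out iteratively (Lemma \ref{lem:pull-out-regular}), and the resulting near-partition forces $\theta$-quasirandomness (Lemma \ref{lem:partition-implies-quasirandomness}). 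To repair your direct approach you would at minimum need to (i) count cliques using only the bipartite bias rather than a nonexistent internal density, (ii) bound extensions over all of $V$ rather than over $Y'$, and (iii) build in the stability inequality so that a shortfall of blue extensions produces a surplus of red ones; at that point you would essentially be reconstructing the paper's proof.
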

Moreover, in Theorem \ref{thm:quasirandomconverse}, we prove a converse to this result, which, when combined with Theorem \ref{thm:strongquasirandom}, implies the following result.

\begin{thm}\label{thm:combined-quasirandom}
	Fix $k \geq 2$. A $2$-coloring of the edges of $K_N$ is $o(1)$-quasirandom if and only if all but $o(N^k)$ monochromatic $K_k$ have at most $(2^{-k}+o(1))N$ extensions to a monochromatic $K_{k+1}$. 
\end{thm}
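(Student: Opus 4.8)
The plan is to establish the two implications separately: one is the contrapositive of Theorem~\ref{thm:strongquasirandom}, and the other (the ``converse'' mentioned just before the statement, packaged as Theorem~\ref{thm:quasirandomconverse}) is a short second-moment argument.

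\emph{Few large books $\Rightarrow$ quasirandom.} Here I would simply negate Theorem~\ref{thm:strongquasirandom}. If a coloring of $K_N$ is not $o(1)$-quasirandom, then it fails to be $\theta$-quasirandom for some fixed $\theta\in(0,\tfrac12)$ and infinitely many $N$; for each such $N$, Theorem~\ref{thm:strongquasirandom} produces at least $c_1(\theta,k)N^k$ monochromatic $K_k$ with more than $(2^{-k}+c_1)N$ extensions to a monochromatic $K_{k+1}$. Since $c_1>0$ is a fixed constant, this is incompatible with all but $o(N^k)$ of the monochromatic $K_k$ having at most $(2^{-k}+o(1))N$ monochromatic extensions. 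Nothing new is needed for this direction.

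\emph{Quasirandom $\Rightarrow$ few large books.} Fix $\varepsilon>0$ and suppose the coloring of $K_N$ is $\theta$-quasirandom with $\theta=\theta(\varepsilon,k)$ small and $N$ large. Averaging the defining inequality over a uniformly random bipartition of $V(K_N)$ first shows that each color class has edge density $\tfrac12+O(\theta)$, so each is quasirandom of density $\tfrac12+o(1)$ in the sense of Chung, Graham, and Wilson~\cite{ChGrWi}. By the $C_4$-count characterization of quasirandomness, together with a short inclusion--exclusion over the two colors, the number of ordered $m$-tuples of vertices realizing any fixed $2$-colored pattern on $m$ vertices is then $(1+o(1))2^{-\binom m2}N^m$ (for the finitely many patterns with $m\le k+2$ that we need). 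Writing $X_S$ for the number of monochromatic extensions of a monochromatic $K_k$ $S$ (equivalently, the number of common neighbors of $S$ in its own color) and $t$ for the number of monochromatic $K_k$, these pattern counts give $t=(1+o(1))\tfrac2{k!}2^{-\binom k2}N^k$ and $\sum_S X_S=(1+o(1))\tfrac2{k!}2^{-\binom{k+1}2}N^{k+1}$, so the average of $X_S$ is $(1+o(1))2^{-k}N$ --- exactly the threshold in the statement. For the second moment I would count triples $(S,u,v)$ with $u$ and $v$ both monochromatic extensions of $S$: the diagonal $u=v$ contributes only $O(N^{k+1})$, while each off-diagonal triple realizes one of four $2$-colored patterns on $k+2$ vertices (determined by the colors of $S$ and of $uv$), each realized $(1+o(1))\tfrac1{k!}2^{-\binom{k+2}2}N^{k+2}$ times; since $\binom{k+2}2-\binom k2=2k+1$, summing the four contributions yields $\sum_S X_S^2=(1+o(1))(2^{-k}N)^2\, t$.

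Thus the average of $X_S^2$ matches the square of the average of $X_S$ to leading order, so
\[
\sum_S\bigl(X_S-2^{-k}N\bigr)^2
=\Bigl(\frac{\sum_S X_S^2}{t}-2\cdot2^{-k}N\cdot\frac{\sum_S X_S}{t}+(2^{-k}N)^2\Bigr)t
=o\bigl(tN^2\bigr)=o\bigl(N^{k+2}\bigr),
\]
and Markov's inequality bounds the number of monochromatic $K_k$ with $X_S>(2^{-k}+\varepsilon)N$ by $o(N^{k+2})/(\varepsilon N)^2=o_\varepsilon(N^k)$. Letting $\varepsilon\to0$ sufficiently slowly with $N$ then gives that all but $o(N^k)$ of the monochromatic $K_k$ have at most $(2^{-k}+o(1))N$ extensions, completing this direction. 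The one genuinely delicate step is the transfer from the bipartite edge-count hypothesis to the $(1+o(1))$-accurate counts of the monochromatic $K_{k+1}$, $K_{k+2}$, and ``$K_{k+2}$ minus an edge'' configurations; I expect this to be the main (albeit entirely standard) obstacle. It is worth emphasizing that the cancellation in the displayed variance estimate is forced: it occurs precisely because quasirandomness pins down all of the relevant small pattern counts at once, so that the extension counts of the monochromatic $K_k$ are pairwise essentially uncorrelated.
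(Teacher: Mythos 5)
Your proposal is correct and follows essentially the same route as the paper: the first direction is exactly the contrapositive of Theorem~\ref{thm:strongquasirandom}, and your second-moment computation is the paper's proof of Theorem~\ref{thm:quasirandomconverse} in only slightly different packaging (your four colored patterns on $k+2$ vertices, split by the color of $uv$, together add up to the paper's count $M(K_{k+2}-e)=\sum_Q\binom{\ext(Q)}{2}$, and the cancellation you exhibit is the same one the paper gets from combining $M(K_{k+2}-e)$, $M(K_{k+1})$, and $M(K_k)$ before applying a Chebyshev-type bound). The ``delicate'' step you flag --- transferring $\theta$-quasirandomness to $(1+o(1))$-accurate monochromatic pattern counts --- is likewise handled in the paper by citing the standard Chung--Graham--Wilson counting result, so nothing further is needed.
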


This adds to the long list of properties known to be equivalent to quasirandomness. It also has an interesting consequence related to a famous (and famously false) conjecture of Erd\H os~\cite{Erdos62}. He conjectured that, for any fixed $k\geq 3$, every red/blue coloring of $K_N$ contains at least $(1-o(1))2^{1-\binom k2}\binom Nk$ monochromatic $K_k$, that is, that a uniformly random coloring asymptotically minimizes the number of monochromatic $K_k$. While this conjecture is a simple consequence of Goodman's formula~\cite{Goodman} when $k=3$, Thomason \cite{Thomason89} showed that it is false for all $k \geq 4$. Any coloring witnessing the failure of this conjecture (i.e.,\ with asymptotically fewer than $2^{1-\binom k2}\binom Nk$ monochromatic $K_k$) must not be $o(1)$-quasirandom, as a quasirandom coloring has the same count of $K_k$ as a uniformly random coloring. Therefore, Theorem \ref{thm:combined-quasirandom} implies that any coloring with ``too few'' monochromatic $K_k$ must have the property that a positive proportion of its $K_{k-1}$ lie in ``too many'' monochromatic $K_k$.
In other words, it is impossible to have asymptotically fewer monochromatic $K_k$ than in a random coloring unless these $K_k$ are somehow more clustered than in a random coloring.

The rest of the paper is organized as follows. In Section \ref{sec:regularitytools}, we collect some fairly standard results related to Szemer\'edi's regularity lemma which will be important in our proofs. In Section~\ref{sec:simplifiedproof}, we present a streamlined proof of Theorem \ref{thm:conlon}. Most of the ideas in this proof are already present in~\cite{Conlon}, but the presentation here is simpler. Moreover, various ideas and results from Section \ref{sec:simplifiedproof} will be adapted and reused later in the paper. In Section \ref{sec:basicresult}, we prove Theorem \ref{thm:tripleexp}, which improves the error term in Theorem \ref{thm:conlon}. We then present our quasirandomness results in Section \ref{sec:quasirandomness}, including the proof of Theorem \ref{thm:strongquasirandom} and its converse. We conclude with some further remarks, though there is also an appendix where we consign the proofs of certain technical lemmas.

\subsection{Notation and terminology}
If $X$ and $Y$ are two vertex subsets of a graph, let $e(X,Y)$ denote the number of pairs in $X \times Y$ that are edges. We will often normalize this and consider the \emph{edge density},
\[
	d(X,Y)=\frac{e(X,Y)}{|X||Y|}.
\]
If we consider a red/blue coloring of the edges of a graph, then $e_B(X,Y)$ and $e_R(X,Y)$ will denote the number of pairs in $X \times Y$ that are blue and red edges, respectively. Similarly, $d_B$ and $d_R$ will denote the blue and red edge densities, respectively. Finally, for a vertex $v$ and a set $Y$, we will sometimes abuse notation and write $d(v,Y)$ for $d(\{v\},Y)$ and similarly for $d_B$ and $d_R$. 

An \emph{equitable partition} of a graph $G$ is a partition of the vertex set $V(G)=V_1 \sqcup \dotsb \sqcup V_m$ with $||V_i|-|V_j||\leq 1$ for all $1 \leq i,j \leq m$. A pair of vertex subsets $(X,Y)$ is said to be \emph{$\varepsilon$-regular} if, for every $X' \subseteq X$, $Y' \subseteq Y$ with $|X'| \geq \varepsilon|X|$, $|Y'| \geq \varepsilon |Y|$, we have
\[
	|d(X,Y)-d(X',Y')| \leq \varepsilon.
\]
Note that we do not require $X$ and $Y$ to be disjoint. In particular, we say that a single vertex subset $X$ is \emph{$\varepsilon$-regular} if the pair $(X,X)$ is $\varepsilon$-regular. We will often need a simple fact, known as the \emph{hereditary property} of regularity, which asserts that for any $0<\alpha\leq\frac 12$, if $(X,Y)$ is $\varepsilon$-regular and $X' \subseteq X$, $Y' \subseteq Y$ satisfy $|X'| \geq \alpha |X|$, $|Y'| \geq \alpha |Y|$, then $(X',Y')$ is $(\varepsilon/\alpha)$-regular. 

\begin{numrem}\label{rem:equitable-partitions}
All logarithms are to base $2$ unless otherwise stated. For the sake of clarity of presentation, we systematically omit floor and ceiling signs whenever they are not crucial. In this vein, whenever we have an equitable partition of a vertex set, we will always assume that all of the parts have exactly the same size, rather than being off by at most one. Because the number of vertices in our graphs will always be ``sufficiently large'', this has no effect on our final results.
\end{numrem}

\section{Some regularity tools}\label{sec:regularitytools}
The main regularity result we will need is the following, which is a slight strengthening of the usual version of Szemer\'edi's regularity lemma. 
\begin{lem}\label{reglem}
	For every $\varepsilon>0$ and $M_0 \in \mathbb N$, there is some $M=M(\varepsilon,M_0)>M_0$ such that for every graph $G$, 
	there is an equitable partition $V(G)=V_1 \sqcup \dotsb \sqcup V_m$ into $M_0 \leq m \leq M$ parts so that the following hold:
	\begin{enumerate}
		\item Each part $V_i$ is $\varepsilon$-regular and
		\item For every $1 \leq i \leq m$, there are at most $\varepsilon m$ values $1 \leq j \leq m$ such that the pair $(V_i,V_j)$ is not $\varepsilon$-regular.
	\end{enumerate}
\end{lem}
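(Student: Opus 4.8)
The plan is to obtain Lemma~\ref{reglem} by bootstrapping from the standard Szemer\'edi regularity lemma, taken in the form: for every $\eta>0$ and $m_0\in\N$ there is $M_1=M_1(\eta,m_0)$ so that every graph admits an equitable partition into $m_0\le m\le M_1$ parts with all but at most $\eta m^2$ of the ordered pairs $(V_i,V_j)$ being $\eta$-regular. Two of the three features we need are cheap. The lower bound $m\ge M_0$ is immediate, since we may feed $m_0=M_0$ into the standard lemma. The per-part form of item~2 --- that \emph{every} $V_i$, not merely most of them, has at most $\varepsilon m$ non-$\varepsilon$-regular partners --- follows from the standard manipulation: run the lemma with parameter $\eta=\varepsilon^2$, note that at most $\varepsilon m$ parts have more than $\varepsilon m$ irregular partners, and dispose of these few exceptional parts by redistributing their vertices among the others (a perturbation affecting at most an $\varepsilon$-fraction of the vertices). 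I would record both of these as preliminary reductions.

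The substantive requirement is item~1, that \emph{every} part be internally $\varepsilon$-regular. This is genuinely new: the standard lemma says nothing about the internal structure of its parts, and indeed for a disjoint union of two cliques there are regular equipartitions in which every part consists of a large piece of one clique together with a small piece of the other, the small pieces spoiling internal regularity. The natural remedy is to refine. Beginning from a regular partition with parameter $\varepsilon_1\ll\varepsilon$, I would, inside each part, apply the regularity lemma again, and iterate this a bounded number of rounds. At each round the pair structure of item~2 is inherited by the refinement (via the hereditary property of regularity, applied carefully so that the accumulated loss in the parameter stays controlled), while the portion of the partition's mean-square density not yet ``explained'' by regular pairs --- essentially the contribution of the still-internally-irregular parts --- gets pushed down. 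One would like the internally irregular parts to occupy only a negligible fraction of the vertices after enough rounds, and to finish by redistributing those vertices among the (internally regular) good parts; provided the good parts were made regular with room to spare and the leftover fraction is correspondingly tiny, a routine perturbation estimate shows the enlarged parts remain $\varepsilon$-regular and still satisfy item~2. One then re-equitizes and relabels, absorbing all floor/ceiling issues via Remark~\ref{rem:equitable-partitions}.

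I expect the main obstacle to be the bookkeeping in this refinement-and-cleanup step, and in particular \emph{bounding the number of parts}. The usual index-increment argument is only of limited help, because the diagonal pairs $(V_i,V_i)$ together carry just a $1/m$ share of the mean-square density: refining all the internally irregular parts at once buys only $O(\varepsilon^4/m)$ of index, which is not bounded below once $m$ is large, so this move cannot simply be iterated a bounded number of times. Overcoming this forces one to choose the auxiliary parameters --- the initial $\varepsilon_1$, the number of refinement rounds, and the threshold below which leftover parts are absorbed --- in the right order, so that the accumulated hereditary-property loss, the number of rounds required to make the leftover mass negligible, and the resulting number of parts all remain bounded purely in terms of $\varepsilon$ and $M_0$. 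That delicate interplay of parameters is, I expect, the heart of the proof, and presumably why it is consigned to an appendix.
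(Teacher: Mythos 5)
Your reductions for the lower bound $m\ge M_0$ and for the per-part form of item~2 are fine and in fact match the final cleanup step of the paper (at most $(2\eta/\varepsilon)m$ ``bad'' parts are dissolved and their vertices redistributed, with a perturbation estimate of the form ``adding at most $\beta|Y|$ vertices costs at most $2\sqrt\beta$ in regularity'' showing the enlarged parts stay $\varepsilon$-regular). The genuine gap is exactly where you suspect it, and your sketch does not close it: your mechanism for item~1 is to iterate the regularity lemma inside the internally irregular parts and drive the process by an index/energy increment, but, as you yourself compute, refining all internally irregular parts buys only $O(\varepsilon^{O(1)}/m)$ of index, and $m$ grows at every round, so the increments form a convergent-looking series and the iteration need not terminate after boundedly many rounds. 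Saying that one must ``choose the auxiliary parameters in the right order'' does not resolve this; no choice of $\varepsilon_1$, number of rounds, and absorption threshold rescues a scheme whose per-round gain degrades with the current number of parts, and (as your two-cliques example already shows) a part whose refinement has almost all cross-pairs regular need not be anywhere near internally regular, so there is no way to declare victory early either.

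The paper avoids the increment entirely by importing a different ingredient: the Conlon--Fox lemma (Lemma~\ref{conlonfox}) that \emph{every} graph contains an $\varepsilon$-regular vertex subset of proportional size $\delta(\varepsilon)|V(G)|$, with $\delta$ depending only on $\varepsilon$. From this, Lemma~\ref{lem:partition} equitably partitions an arbitrary graph into a fixed constant $K(\varepsilon)$ of internally $\varepsilon$-regular parts, by repeatedly pulling out regular subsets trimmed to a common exact size $\delta_1 n$ (the trimming uses the random-sampling Lemma~\ref{lem:random-subgraph-reg}) until only an $\varepsilon^2/10$-fraction of vertices remains, which is then absorbed with the perturbation estimate. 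The proof of Lemma~\ref{reglem} then runs: apply the usual regularity lemma with a tiny parameter $\eta$, apply Lemma~\ref{lem:partition} inside each Szemer\'edi part $W_i$ (the fixed number $K_1=K(\varepsilon_1)$ of sub-parts per $W_i$ is what keeps the refined partition equitable), use the hereditary property to transfer $\eta$-regularity of pairs $(W_{i_1},W_{i_2})$ to $\eta K_1$-regularity of the sub-pairs, and finish with the bad-part redistribution you described. So the heart of the lemma is not a delicate parameter interplay in an energy argument but a qualitatively different tool — the existence of linear-sized regular subsets — and without it (or some substitute of comparable strength) your proposed route does not go through.
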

\noindent 
Note that this strengthens Szemer\'edi's regularity lemma in two ways: first, it ensures that every part of the partition is $\varepsilon$-regular with itself and, second, it imposes some structure on the fewer than $\varepsilon m^2/2$ irregular pairs, ensuring that they are reasonably well-distributed. 

In order to prove Lemma \ref{reglem}, we will need some other results. The first asserts that every graph contains a reasonably large $\varepsilon$-regular subset.
\begin{lem}[Conlon--Fox {\cite[Lemma 5.2]{CoFo}}]\label{conlonfox}
	Given $0<\varepsilon<\frac 12$, let $\delta=2^{-\varepsilon^{-(10/\varepsilon)^4}}$. Then, for every graph $G$, there is an $\varepsilon$-regular subset $W \subseteq V(G)$ with $|W| \geq \delta |V(G)|$.
\end{lem}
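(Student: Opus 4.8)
The plan is to prove this by a ``zoom-in'' argument: start with $W = V(G)$, and as long as $W$ is not $\varepsilon$-regular, pass to a substantial subset of $W$ on which a suitable bounded potential has strictly increased, iterating until $W$ becomes regular. Two preliminary observations shape the setup. First, $\varepsilon$-regularity of a set is preserved under complementation, so it suffices to drive the internal density $d(W,W)$ to within $\varepsilon^3$ of \emph{either} $0$ or $1$: if $d(W,W) \le \varepsilon^3$, then any $X,Y \subseteq W$ with $|X|,|Y| \ge \varepsilon|W|$ satisfy $d(X,Y) \le d(W,W)|W|^2/(|X||Y|) \le \varepsilon$, so $(W,W)$ is automatically $\varepsilon$-regular (and dually near density $1$); hence the process must terminate once the density has been pushed close enough to an endpoint. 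Second, whenever $W$ is not $\varepsilon$-regular there are $X, Y \subseteq W$ with $|X|,|Y| \ge \varepsilon|W|$ and $|d(X,Y) - d(W,W)| > \varepsilon$, and this is exactly the kind of data that drives an energy/mean-square-density increment.

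The engine is the defect form of the Cauchy--Schwarz inequality. Partitioning $W$ using the pair $(X,Y)$ increases the mean square of the densities across the resulting cells by at least $\tfrac{|X||Y|}{|W|^2}\,(d(X,Y) - d(W,W))^2 \ge \varepsilon^4$. What I actually want to extract from this is a \emph{large piece} of $W$ whose internal density is bounded away from $d(W,W)$: among the cells of the $(X,Y)$-refinement (and their pairwise unions, using the identity $d(A \cup B, A \cup B) = \big(|A|^2 d(A,A) + |B|^2 d(B,B) + 2|A||B|\,d(A,B)\big)/(|A|+|B|)^2$ for disjoint $A,B$), one can always locate a set $W'$ of size a constant fraction of $|W|$ -- after, if necessary, first applying an auxiliary partition of $W$ into a number of parts polynomial in $N/|W|$ in order to witness the deviation robustly -- with $|d(W',W') - d(W,W)| \ge \varepsilon^{O(1)}$. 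Replacing $W$ by $W'$ is the zoom-in step: it moves the internal density by a definite amount while shrinking $|W|$ by a controlled factor.

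Putting this together, I would run the zoom-in for $T = (10/\varepsilon)^4$ rounds. The potential -- roughly, the internal density measured against whichever endpoint it is being pushed toward, together with a bookkeeping of the mean-square-density gains, which live in $[0,1]$ and increase by order $\varepsilon^4$ per round -- cannot sustain more than $T$ rounds, at which point the observation of the first paragraph guarantees regularity. In each round the relative density $\alpha = |W|/N$ degrades to roughly $\alpha^{1 + O(1/\varepsilon)}$ (one loses a factor equal to the size of the auxiliary partition needed to expose the density deviation), so after $T$ rounds $\alpha \ge 2^{-\varepsilon^{-O((1/\varepsilon)^4)}} = \delta$. The hard part -- and the reason the final bound is a double exponential rather than a tower -- is precisely this quantitative bookkeeping: one must run each round using an auxiliary partition whose size stays polynomial in $1/\alpha$ (never invoking the full regularity lemma, which would cost a tower per round), one must ensure the piece one zooms into carries a full constant share of $|W|$ and not a negligible cell of the $(X,Y)$-refinement, and one must choose the potential so that it is not reset by the zoom-in. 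Balancing these three constraints simultaneously, with the density loss compounding only mildly ($\log(1/\alpha)$ multiplied by $O(1/\varepsilon)$ per round), is the technical heart of the proof.
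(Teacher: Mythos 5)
You should first note that the paper does not actually prove this lemma: it is quoted verbatim from Conlon--Fox \cite{CoFo}, so there is no in-paper argument to compare against and your sketch has to stand on its own. As a plan it is reasonable --- the endpoint observation (internal density within $\varepsilon^3$ of $0$ or $1$ forces $\varepsilon$-regularity, and the two endpoints are exchanged by complementation) is correct, and the rough shape of your bookkeeping is at least consistent with the stated value of $\delta$. But the two steps that would make it a proof are precisely the two you flag and then defer, and as written both are genuine gaps.

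First, the extraction step. The defect Cauchy--Schwarz inequality you invoke gives an increment of the mean-square density over the cells of the $(X,Y)$-refinement of $W$, but it does not by itself produce a single \emph{large} set $W'$ with $|d(W',W')-d(W,W)|\ge \varepsilon^{O(1)}$: the cells $X\cap Y$, $X\setminus Y$, $Y\setminus X$, $W\setminus(X\cup Y)$ can be tiny or empty, and in a union $A\cup B$ the deviation of the cross density $d(A,B)$ can be cancelled by the internal terms in the identity you quote, with the transfer factor $(|A|+|B|)^2/(2|A||B|)$ blowing up when the pieces are unbalanced. You patch this with an unspecified ``auxiliary partition into a number of parts polynomial in $N/|W|$,'' but no argument is given for why such a partition exists, why its size should depend on $N/|W|$ rather than on $1/\varepsilon$, or how it yields the deviating piece; this choice appears to be reverse-engineered from the target bound rather than derived. (A cleaner route here is the contrapositive: if every $S\subseteq W$ with $|S|\ge \mathrm{poly}(\varepsilon)|W|$ has $|d(S,S)-d(W,W)|\le \mathrm{poly}(\varepsilon)$, then $W$ is $\varepsilon$-regular, by decomposing $e(X,Y)$ into internal edge counts of $X\cap Y$, $X\setminus Y$, $Y\setminus X$ and their unions --- though this gives a $\mathrm{poly}(\varepsilon)$-fraction subset, not the ``constant share'' your bookkeeping assumes.) Second, and more seriously, termination. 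Your increment is inherently two-sided: $d(W',W')$ may move toward either endpoint, so ``distance to the nearer endpoint'' is not monotone, and the energy gain is computed with respect to partitions of the \emph{current} set $W$, so it is reset the moment you replace $W$ by $W'\subseteq W$ --- exactly the obstruction you name (``the potential must not be reset by the zoom-in'') without resolving it. Nothing you have written rules out the internal density oscillating up and down by $\varepsilon^{O(1)}$ forever while the set shrinks, so there is no bound of $(10/\varepsilon)^4$ on the number of rounds and hence no lower bound on $|W|/|V(G)|$. Exhibiting a bounded potential that increases by a definite amount per round \emph{and} survives passing to a subset is the heart of the matter; until that is supplied, the proposal is a strategy outline rather than a proof.
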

The next lemma asserts that inside an $\varepsilon$-regular set of vertices, we may find a subset of any specified cardinality whose regularity is not much worse, provided we do not restrict to too small a cardinality. The proof of this lemma may be found in the appendix.
\begin{lem}\label{lem:random-subgraph-reg}
	Fix $0<\varepsilon<\frac 15$ and let $t \geq \varepsilon^{-4}$ be an integer. Let $G$ be a graph on at least $t$ vertices and suppose that $V(G)$ is $\varepsilon$-regular. Then there is a subset $U \subseteq V(G)$ with $|U|=t$ such that $U$ is $(10 \varepsilon)^{1/3}$-regular. In fact, a randomly chosen $U \in \binom {V(G)}t$ will be $(10 \varepsilon)^{1/3}$-regular with probability tending to $1$ as $\varepsilon\to 0$.
\end{lem}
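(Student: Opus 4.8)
The plan is to pick $U \in \binom{V(G)}{t}$ uniformly at random and show that with high probability it inherits a slightly weakened regularity from $V(G)$. Write $V = V(G)$ and $n = |V|$. The key point is that for a fixed pair of subsets $A', B' \subseteq V$ with $|A'|, |B'| \geq \varepsilon' n$ (for an appropriate threshold $\varepsilon'$), the restriction $(U \cap A', U \cap B')$ will, with very high probability, have roughly the size and edge count predicted by the densities in $V$, simply by concentration of hypergeometric-type random variables. The subtlety is that we cannot afford a union bound over all pairs of subsets of $U$; instead we transfer the condition to pairs of subsets of the ambient set $V$ and use that there are at most $4^n$ of those, against failure probabilities that are $e^{-\Omega(t^2/n^2 \cdot (\varepsilon')^2 n)}$ — but this naive bound is not good enough, so the standard trick is needed.

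Here are the steps in order. First, I would recall (or quote) the standard fact that it suffices to test regularity on pairs $(A', B')$ that are sufficiently large relative to $U$; more precisely, $(U, U)$ being $\eta$-regular can be verified by checking $|d(U,U) - d(A',B')| \leq \eta/2$ only for $A', B' \subseteq U$ with $|A'|, |B'| \geq (\eta/2)|U|$, up to adjusting constants. Second, I would fix a pair of subsets $A, B \subseteq V$ with $|A|, |B| \geq \frac{\eta}{4} t$ in \emph{expected} intersection size with $U$ — that is, fix $A, B \subseteq V$ with $|A|, |B| \geq \frac{\eta}{4} n$ — and show that, with probability at least $1 - e^{-c t}$ for a constant depending on $\eta$, we have simultaneously $||U \cap A| - \frac{t}{n}|A|| \leq \delta t$ and $|e(U\cap A, U \cap B) - d(A,B)|U\cap A||U\cap B|| \leq \delta t^2$, using Azuma/McDiarmid on the vertex-exposure martingale (changing one vertex of $U$ changes $|U \cap A|$ by at most $1$ and the edge count by at most $t$). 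The condition $t \geq \varepsilon^{-4}$ is exactly what makes $e^{-ct}$ beat the $4^n$-type loss \emph{after} one replaces the union bound over subsets of $V$ by a union bound over subsets of $U$: since $|U| = t$, there are only $4^t$ pairs $(A', B')$ with $A', B' \subseteq U$, and $4^t \cdot e^{-ct} \to 0$ provided $t$ is large, i.e.\ provided $\varepsilon$ is small. Third, given a pair $A', B' \subseteq U$ witnessing a density deviation, I would take $A = A'$, $B = B'$ as subsets of $V$ (so they are automatically large enough in $V$ since $|A'| \geq (\eta/2) t = (\eta/2)(t/n) \cdot n$ forces $|A| \geq$ a constant fraction of $n$ only if... )—and here is where care is needed: a subset of $U$ of size $(\eta/2) t$ need not be a constant fraction of $V$. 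The fix is to use the regularity of $(V,V)$ not against $(A', B')$ directly but against a \emph{random extension}: more simply, one shows that if $V$ is $\varepsilon$-regular then for \emph{every} pair $A, B \subseteq V$ of size $\geq \varepsilon n$, $d(A,B) \in [d(V,V) - \varepsilon, d(V,V)+\varepsilon]$, and separately that a random $U$ has $d(U \cap A, U \cap B)$ close to $d(A,B)$ for all such $A, B$; then for $A', B' \subseteq U$ one passes through their ``shadows'' appropriately. The cleanest route, which I would adopt, is: condition on the (high-probability) event that for \emph{all} $A, B \subseteq V$ with $|A|, |B| \geq \frac{\eta^2}{100} n$ one has $|d(U \cap A, U \cap B) - d(A, B)| \leq \frac{\eta}{10}$; combine with $\varepsilon$-regularity of $V$; and then check that any $A', B' \subseteq U$ of size $\geq \frac{\eta}{2} t$ can be ``padded'' inside $V$ to sets of size $\geq \frac{\eta^2}{100} n$ with density changing by $\leq \frac{\eta}{10}$, because $U$ itself has density close to $d(V,V)$ and the padding vertices can be chosen with controlled density. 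Finally I would bookkeep the constants: starting from $\varepsilon$, after passing through these three or four density-deviation budgets each of size a fixed multiple of $\varepsilon^{1/3}$ or $\varepsilon$, the final regularity parameter comes out as $(10\varepsilon)^{1/3}$, which dominates all the linear-in-$\varepsilon$ error terms once $\varepsilon < 1/5$.

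The main obstacle, as flagged above, is the mismatch of scales: regularity of $U$ must be tested against subsets that are a constant fraction of $|U| = t$, but these can be an arbitrarily small fraction of $|V| = n$, so one cannot directly invoke $\varepsilon$-regularity of $V$ against them, nor can one afford a union bound over all $\binom{n}{s}$ such subsets for small $s$. The resolution is to do the union bound \emph{only} over subsets of $U$ (of which there are $4^t$, cheap), establishing the transference statement ``$d(U \cap A, U \cap B) \approx d(A,B)$ for all large $A, B \subseteq V$'' as the high-probability event, and then to argue deterministically that this transference statement plus $\varepsilon$-regularity of $V$ forces $(10\varepsilon)^{1/3}$-regularity of $U$. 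The inequality $t \geq \varepsilon^{-4}$ is precisely calibrated so that $4^t e^{-c(\eta) t} = o(1)$ and so that the concentration errors $\delta t$, when divided by $|U \cap A| \geq \frac{\eta}{4} t - \delta t$, are swallowed into the $(10\varepsilon)^{1/3}$ budget; I would verify this calibration at the end rather than carry it symbolically throughout.
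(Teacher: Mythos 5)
There is a genuine gap at the heart of your argument: the ``transference'' event you want to condition on cannot be established by the concentration-plus-union-bound scheme you describe, in either of the two forms you offer. If you quantify over pairs $A,B\subseteq V$ with $|A|,|B|\geq \frac{\eta^2}{100}n$, there are $4^n$ such pairs, while the per-pair failure probability from Azuma/McDiarmid is only $e^{-c(\eta)t}$ (each exposed vertex of $U$ can shift $e(U\cap A,U\cap B)$ by up to $t$, so the exponent scales with $t$, not $n$); since $n$ may be arbitrarily large compared with $t$, this union bound is hopeless, as you yourself note at the outset. If instead you union bound over index patterns of subsets of $U$, there are $4^t$ pairs, but the deviation probability for a pair of subsets of sizes $\geq \eta t$ at accuracy $\delta$ is of order $e^{-c\delta^2\eta^4 t}$, and $c\delta^2\eta^4<\ln 4$ for every relevant choice of $\delta,\eta<1$; hence $4^t e^{-c(\eta)t}=e^{(\ln 4-c(\eta))t}\to\infty$, and taking $t$ larger (i.e.\ $\varepsilon$ smaller) makes this \emph{worse}, not better. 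The hypothesis $t\geq\varepsilon^{-4}$ cannot repair this, and in the paper it plays a different role entirely (it makes the sampling error $8t^{-1/4}$ at most $8\varepsilon$). This obstruction is exactly why ``random subsets inherit regularity'' is not provable by a naive union bound and why the literature treats it via nontrivial sampling lemmas. Your ``padding'' step for small subsets of $U$ is also left as a sketch, but it is secondary to the failed union bound.

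The paper's proof sidesteps the problem by never union bounding over subsets at all: it replaces $\varepsilon$-regularity by the polynomially equivalent notion of $\varepsilon$-homogeneity, i.e.\ smallness of the cut distance $d_\square(G,H_d)$ to the constant-density graph, and then invokes the First Sampling Lemma for cut distance (\cite[Lemma 10.5]{Lovasz}) as a black box, together with a single Azuma estimate for the one scalar quantity $|d-d'|$ (the density of the sample). The triangle-inequality decomposition then gives $d_\square(G[U],H_{d'}[U])\leq 10\varepsilon$ with probability $1-6e^{-\sqrt t/10}$, whence $G[U]$ is $10\varepsilon$-homogeneous and therefore $(10\varepsilon)^{1/3}$-regular. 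If you want an argument in the spirit of yours that actually closes, you must likewise route the regularity of $U$ through a single global statistic that concentrates (the cut distance to a constant graph, or a $C_4$/codegree count via the quasirandomness equivalences), rather than through all pairs of large subsets.
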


Using the previous two lemmas, we can prove that any graph may be equitably partitioned into $\varepsilon$-regular subsets. We will use this result instead of Lemma \ref{reglem} as the main partitioning lemma in the proof of Theorem \ref{thm:tripleexp}.
\begin{lem} \label{lem:partition}
	Fix $0<\varepsilon< \frac 1{100}$ and suppose that $G$ is a graph on $n \geq 2^{1/\varepsilon^{(10/\varepsilon)^{15}}}$ vertices. Then $G$ has an equitable partition $V(G)=V_1 \sqcup \dotsb \sqcup V_K$ such that each $V_i$ is $\varepsilon$-regular, where $K=K(\varepsilon)$ is a constant depending only on $\varepsilon$ satisfying $2^{1/\varepsilon^{(10/\varepsilon)^{12}}} \leq K(\varepsilon) \leq 2^{1/\varepsilon^{(10/\varepsilon)^{15}}}$.
\end{lem}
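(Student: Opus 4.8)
The plan is to build the partition by greedily peeling off, one part at a time, a regular ``core'' of a fixed size $s$, and then distributing the few leftover vertices among these cores; the point is that attaching a small number of vertices to a regular set barely damages its regularity, so the padded cores will remain $\varepsilon$-regular even though the raw cores will only be, say, $(\varepsilon/2)$-regular. To fix parameters, I would set $\varepsilon_0 = \varepsilon^3/80$ (so that $(10\varepsilon_0)^{1/3} = \varepsilon/2$) and let $\delta_0 = 2^{-\varepsilon_0^{-(10/\varepsilon_0)^4}}$ be the density guaranteed by Lemma \ref{conlonfox} for the parameter $\varepsilon_0$. Then I would take $K$ to be the smallest integer with $K \geq C\varepsilon^{-3}\delta_0^{-1}$ for a suitable absolute constant $C$, and fix the core size $s$ to be any integer in the interval $\bigl[(n/K)(1-c\varepsilon^3),\ n/(K-1+\delta_0^{-1})\bigr]$ for a suitable small absolute constant $c$.

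The greedy step is then routine. Maintaining the set $R$ of unused vertices (initially all of $V(G)$), at each of $K$ stages I apply Lemma \ref{conlonfox} to $G[R]$ to get an $\varepsilon_0$-regular $W\subseteq R$ with $|W|\geq\delta_0|R|$. Since at most $K-1$ cores have been removed and $s\leq n/(K-1+\delta_0^{-1})$, one checks that $|R|\geq s/\delta_0$, hence $|W|\geq s$; and since $s\geq\varepsilon_0^{-4}$ (which holds because $n$ is a huge tower in $1/\varepsilon$ while $K$ is not), Lemma \ref{lem:random-subgraph-reg} lets me trim $W$ to a subset $C$ of size exactly $s$ that is $(10\varepsilon_0)^{1/3}=(\varepsilon/2)$-regular. (Here I use that $\varepsilon_0$-regularity of $W$ is an intrinsic property of the induced subgraph on $W$, so it does not matter that $W$ was found inside $G[R]$.) Removing $C$ and iterating produces pairwise disjoint $(\varepsilon/2)$-regular cores $C_1,\dots,C_K$ of size $s$, leaving a set $R^*=V(G)\setminus\bigcup_i C_i$ with $|R^*|=n-Ks$. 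I then split $R^*$ evenly, $R^*=R_1^*\sqcup\dots\sqcup R_K^*$ with $|R_i^*|=n/K-s$, and set $V_i=C_i\cup R_i^*$; since $|V_i|=n/K$ for every $i$, this is an equitable partition. Finally, each $V_i$ is $\varepsilon$-regular by the elementary fact — an easy edge count — that enlarging an $(\varepsilon/2)$-regular set of size $s$ by at most $O(\varepsilon^3 s)$ vertices leaves it $\varepsilon$-regular, applied with $|R_i^*|=n/K-s=O(\varepsilon^3 s)$ by the choice of $s$.

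The main obstacle, and really the only substantive point, is the parameter bookkeeping: the single choice of $K$ must simultaneously (i) lie in the window $[2^{1/\varepsilon^{(10/\varepsilon)^{12}}},2^{1/\varepsilon^{(10/\varepsilon)^{15}}}]$, (ii) make the interval of admissible core sizes $s$ nonempty, which unwinds to an inequality of the shape $\delta_0^{-1}-1\leq c\varepsilon^3(K-1+\delta_0^{-1})$ and is exactly why $K\gtrsim\varepsilon^{-3}\delta_0^{-1}$ is imposed, and (iii) keep $s$ (which is $n/K$ up to a factor $1+O(\varepsilon^3)$) above $\varepsilon_0^{-4}$. All three reduce to comparisons of towers $\varepsilon^{-(10/\varepsilon)^j}$, and the crux is to verify that after substituting $\varepsilon_0=\varepsilon^3/80$, the exponent $\varepsilon_0^{-(10/\varepsilon_0)^4}$ sits between $\varepsilon^{-(10/\varepsilon)^{12}}$ and $\varepsilon^{-(10/\varepsilon)^{15}}$ with room to spare for the polynomial factors — the cube in $\varepsilon_0=\varepsilon^3/80$ being precisely what buys back the factor lost in shrinking the exponent from $(10/\varepsilon)^{12}$ to $(10/\varepsilon_0)^4$. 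This then pins $\delta_0^{-1}$, and hence $K$, into the required window.
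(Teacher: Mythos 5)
Your proposal is correct and follows essentially the same route as the paper: greedily peel off equal-sized regular cores by combining Lemma \ref{conlonfox} with Lemma \ref{lem:random-subgraph-reg}, then distribute the small leftover equitably among the cores and absorb it using the fact that adding few vertices barely harms regularity (the paper cites \cite[Lemma 5.6]{CoFo} for this last step, where you use a direct edge count). The remaining differences --- fixing $K$ first and sizing the cores accordingly rather than fixing the core size $\delta_1 n$ and letting $K$ fall out, and the choice of an $O(\varepsilon^3)$ rather than $O(\varepsilon^2)$ leftover fraction --- are cosmetic.
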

\begin{rem}
	Unlike Szemer\'edi's regularity lemma, Lemma \ref{lem:partition} makes no assertion about regularity between the parts, only that they are all $\varepsilon$-regular with themselves. Lemma \ref{lem:partition} is very similar to \cite[Lemma 5.7]{CoFo}, which is also proven by repeatedly applying Lemma \ref{conlonfox}. However, our lemma is stronger both in guaranteeing that the partition is equitable and in having the number of parts be a fixed constant depending only on $\varepsilon$, rather than lying in some range. 
\end{rem}
\begin{proof}[Proof of Lemma \ref{lem:partition}]
	Let $n=|V(G)|$. Let $\varepsilon_0=\varepsilon^3/10^4$, let $\delta_0=\delta(\varepsilon_0)=2^{-\varepsilon_0^{-(10/\varepsilon_0)^4}}$ be the parameter from Lemma \ref{conlonfox}, and set $\delta_1=\varepsilon^2 \delta_0/10$. Note that by our assumption on $n$, we have that $n \geq \varepsilon_0^{-4} \delta_1^{-1}$.

	We will iteratively construct a sequence of disjoint $(\varepsilon/10)$-regular vertex subsets $U_1,U_2,\ldots$ with $|U_i|=\delta_1 n$ for all $i$. To begin the sequence, we apply Lemma \ref{conlonfox} to find a set $W_1 \subseteq V(G)$ with $|W_1| \geq \delta_0 n$ that is $\varepsilon_0$-regular. We now apply Lemma \ref{lem:random-subgraph-reg} with $t=\delta_1 n$ to $W_1$ to find an $(\varepsilon/10)$-regular subset $U_1 \subseteq W_1$ with $|U_1|=\delta_1 n$. Note that we may apply Lemma \ref{lem:random-subgraph-reg} since, by our assumption on $n$, we have that $t \geq \varepsilon_0^{-4}$. 

	Suppose now that we have defined disjoint sets $U_1,\ldots,U_i$ with $i \leq (1- \varepsilon^2/10)/\delta_1$. Let $V_{i+1}=V \setminus (U_1 \cup \dotsb \cup U_i)$. Then we apply Lemma \ref{conlonfox} to $V_{i+1}$ to find an $\varepsilon_0$-regular subset $W_{i+1} \subseteq V_{i+1}$ with
	\[
		|W_{i+1}| \geq \delta_0 |V_{i+1}|=\delta_0 (1-i \delta_1)n \geq \delta_0\left(1- \frac{1- \varepsilon^2/10}{\delta_1}\delta_1\right)n =\frac{\varepsilon^2 \delta_0 n}{10}=\delta_1 n.
	\]
	Therefore, we may apply Lemma \ref{lem:random-subgraph-reg} to $W_{i+1}$ to find an $(\varepsilon/10)$-regular subset $U_{i+1} \subseteq W_{i+1}$ with $|U_{i+1}|=\delta_1 n$, so continuing the sequence. 

	This process stops once we have $K := \lfloor (1- \varepsilon^2/10)/\delta_1 \rfloor + 1$ sets $U_1,  U_2, \dots, U_K$. At that point, we will have placed at least a $(1- \varepsilon^2/10)$-fraction of the vertices into one of the sets $U_1,\ldots,U_K$. The remaining vertices we arbitrarily and equitably partition into sets $Z_1,\ldots,Z_K$, where
	\[
		|Z_i|\leq\frac{\varepsilon^2 n/10}{K}\leq\frac{\varepsilon^2/10}{1- \varepsilon^2/10} (\delta_1 n) <\frac{\varepsilon^2}9 |U_i|.
	\]
	Finally, we set $V_i=U_i \cup Z_i$ to obtain an equitable partition of $V(G)$. Lemma 5.6 in \cite{CoFo} shows that if $(X,Y)$ is an $\alpha$-regular pair of vertices and $Z$ is a set of vertices disjoint from $Y$ with $|Z| \leq \beta |Y|$, then $(X,Y \cup Z)$ is $(\alpha+\beta+\sqrt \beta)$-regular. Applying this fact to $(U_i, U_i)$ twice with $\beta=\varepsilon^2/9$ shows that $V_i$ is $(\varepsilon/10+2(\varepsilon^2/9)+2(\varepsilon/3))$-regular and thus $\varepsilon$-regular. 
\end{proof}

\begin{proof}[Proof of Lemma \ref{reglem}]
	Let $\varepsilon_1=\varepsilon/2$ and $\varepsilon_2=\varepsilon^2/128$ and let $K_1=K(\varepsilon_1)\leq 2^{1/\varepsilon_1^{(10/\varepsilon_1)^{15}}}$ be the parameter from Lemma \ref{lem:partition}. Finally, let $\eta=\min\{\varepsilon_1/K_1, \varepsilon \cdot\varepsilon_2/2\}>0$. The usual form of Szemer\'edi's regularity lemma (e.g.,\ \cite[Theorem 2]{KoShSiSz}) says that there is some $L=L(\eta,M_0)>M_0$ such that we can find an equitable partition $V(G)=W_1 \sqcup \dotsb \sqcup W_\ell$ with $\max\{M_0,1/\eta\} \leq \ell \leq L$ where all but at most $\eta \binom \ell 2$ pairs of parts $(W_i,W_j)$ are $\eta$-regular. We now apply Lemma \ref{lem:partition} to each $W_i$ to get an equitable partition $W_i=U_{i1} \sqcup \dotsb \sqcup U_{iK_1}$ such that each part is $\varepsilon_1$-regular. Since the $W_i$ formed an equitable partition and each $W_i$ is cut up into the same number $K_1$ of parts, the resulting partition of $V(G)$ is equitable. Moreover, since each $U_{ij}$ is at least a $1/K_1$-fraction subset of $W_i$, the hereditary property of regularity implies that if $(W_{i_1},W_{i_2})$ is $\eta$-regular, then $(U_{i_1 j_1},U_{i_2 j_2})$ is $\eta K_1$-regular for all $j_1,j_2$. Therefore, all but an $\eta$-fraction of the pairs $(U_{i_1j_1},U_{i_2j_2})$ with $i_1 \neq i_2$ are $\eta K_1$-regular. By our choice of $\eta$, we know that $\eta K_1\leq \varepsilon_1$. So we have found an equitable partition where each part is $\varepsilon_1$-regular and all but an $\eta$-fraction of the pairs $(U_{i_1 j_1},U_{i_2 j_2})$ with $i_1 \neq i_2$ are $\varepsilon_1$-regular. Moreover, the fraction of pairs $(U_{i_1 j_1}, U_{i_2 j_2})$ with $i_1 =i_2$ is $1/\ell$, so we see that the total fraction of irregular pairs is at most $\eta + 1/\ell \leq 2 \eta$.

	We will now rename the parts as $U_1,\ldots,U_m$, where $m=K_1\ell$, since we no longer need to track which $W$ part each $U$ part came from. We are almost done, except that the irregular pairs might still be badly distributed: some $U_i$ might be involved in more than $\varepsilon m$ irregular pairs. However, since there are at most $2\eta \binom m2$ irregular pairs, the number of such ``bad'' $U_i$ is at most $(2\eta/\varepsilon)m  \leq \varepsilon_2 m$. Therefore, at most an $\varepsilon_2$-fraction of the vertices are contained in a bad $U_i$. We now equitably, but otherwise arbitrarily, distribute these vertices into the remaining at least $(1- \varepsilon_2)m$ parts to obtain a new partition $V_1,\ldots,V_{m'}$, where $V_i$ is obtained from $U_i$ by adding to it at most $\beta|U_i|$ vertices, where $\beta=\frac{\varepsilon_2}{1- \varepsilon_2}<2\varepsilon_2$. We again apply Lemma 5.6 from \cite{CoFo}. In fact, we will only need a slightly weaker bound, namely, that if $(X,Y)$ is an $\alpha$-regular pair of vertices and $Z$ is a set of vertices disjoint from $Y$ with $|Z| \leq \beta |Y|$, then $(X,Y \cup Z)$ is $(\alpha+2\sqrt \beta)$-regular. Therefore, by applying this fact twice, we see that if $(U_i,U_j)$ was an $\varepsilon_1$-regular pair of good parts, then $(V_i,V_j)$ is $(\varepsilon_1+4\sqrt \beta)$-regular. Moreover,
	\[
		\varepsilon_1+4\sqrt \beta < \varepsilon_1+4\sqrt{2 \varepsilon_2}=\frac{\varepsilon}2+4\sqrt{\frac {\varepsilon^2}{64}}=\varepsilon.
	\]
	By the exact same computation, we see that each $V_i$ is $\varepsilon$-regular, since each $U_i$ was $\varepsilon_1$-regular. Therefore, $V_1,\ldots,V_{m'}$ is the desired partition. 
\end{proof}

Another important tool will be a standard counting lemma (see, e.g.,\ \cite[Theorem~3.27]{Zhao}).

\begin{lem}\label{countinglemma}
	Suppose that $V_1,\ldots,V_k$ are (not necessarily distinct) subsets of a graph $G$ such that all pairs $(V_i,V_j)$ are $\varepsilon$-regular. Then the number of labeled copies of $K_k$ whose $i$th vertex is in $V_i$ for all $i$ is 
	\[
		\left( \prod_{1 \leq i<j\leq k}d(V_i,V_j) \pm \varepsilon \binom k2 \right) \prod_{i=1}^k |V_i|.
	\]
\end{lem}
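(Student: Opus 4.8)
The plan is to use the standard telescoping (hybrid) argument, which reduces the statement to a single elementary consequence of $\varepsilon$-regularity. Write $d_{ij}=d(V_i,V_j)$ for $1\le i<j\le k$, and for each such pair let $c_{ij}\colon V_i\times V_j\to\{0,1\}$ be the adjacency indicator of $G$. Since $K_k$ is complete, a labeled copy of $K_k$ with $i$th vertex in $V_i$ is precisely a tuple $(v_1,\dots,v_k)\in V_1\times\dots\times V_k$ with $c_{ij}(v_i,v_j)=1$ for all $i<j$ (injectivity of the $v_i$ being automatic, as $G$ has no loops), so the quantity to estimate equals $\sum_{(v_1,\dots,v_k)}\prod_{i<j}c_{ij}(v_i,v_j)$. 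The asserted main term is $\prod_{i<j}d_{ij}\cdot\prod_i|V_i|=\sum_{(v_1,\dots,v_k)}\prod_{i<j}d_{ij}$, so it suffices to bound $\bigl|\sum_{(v_1,\dots,v_k)}\bigl(\prod_{i<j}c_{ij}(v_i,v_j)-\prod_{i<j}d_{ij}\bigr)\bigr|$ by $\varepsilon\binom k2\prod_i|V_i|$.

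First I would enumerate the pairs as $e_1,\dots,e_m$, $m=\binom k2$, and apply the identity $\prod_t a_t-\prod_t b_t=\sum_s\bigl(\prod_{t<s}b_t\bigr)(a_s-b_s)\bigl(\prod_{t>s}a_t\bigr)$ with $a_t=c_{e_t}$ and $b_t=d_{e_t}$. Writing $e_s=\{i,j\}$, this expresses the difference above as $\sum_{s=1}^m\bigl(\prod_{t<s}d_{e_t}\bigr)T_s$ with
\[
	T_s=\sum_{(v_1,\dots,v_k)}\bigl(c_{ij}(v_i,v_j)-d_{ij}\bigr)\prod_{t>s}c_{e_t},
\]
and since $0\le\prod_{t<s}d_{e_t}\le1$ it is enough to show $|T_s|\le\varepsilon\prod_\ell|V_\ell|$ for each $s$. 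To this end I would fix the coordinates $\vec w=(v_\ell)_{\ell\ne i,j}$ and carry out the sum over $v_i,v_j$ last. Each later pair $e_t$ ($t>s$) is distinct from $e_s=\{i,j\}$ and so contains at most one of $i,j$; hence, for fixed $\vec w$, the product $\prod_{t>s}c_{e_t}$ factors as $\mathbf 1[v_i\in A]\,\mathbf 1[v_j\in B]\,h$ for some $A\subseteq V_i$, $B\subseteq V_j$ (depending on $\vec w$) and some $h\in\{0,1\}$, so the inner double sum collapses to $h\bigl(e(A,B)-d_{ij}|A||B|\bigr)$.

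The one genuine input is now the standard fact that $\varepsilon$-regularity of $(V_i,V_j)$ — which we are given for every pair, including the case $V_i=V_j$, where it is $\varepsilon$-regularity of the single set — gives $\left|e(A,B)-d_{ij}|A||B|\right|\le\varepsilon|V_i||V_j|$ for \emph{every} $A\subseteq V_i$, $B\subseteq V_j$: when $|A|\ge\varepsilon|V_i|$ and $|B|\ge\varepsilon|V_j|$ this follows from the regularity inequality applied to $(A,B)$, and otherwise $|A||B|\le\varepsilon|V_i||V_j|$ and both $e(A,B)$ and $d_{ij}|A||B|$ lie in $[0,|A||B|]$. Summing $\left|h(e(A,B)-d_{ij}|A||B|)\right|\le\varepsilon|V_i||V_j|$ over the $\prod_{\ell\ne i,j}|V_\ell|$ choices of $\vec w$ gives $|T_s|\le\varepsilon\prod_\ell|V_\ell|$, and summing over $s$ yields total error at most $m\varepsilon\prod_\ell|V_\ell|=\binom k2\varepsilon\prod_\ell|V_\ell|$, as claimed. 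There is no real obstacle — this is a textbook argument — and the only places needing mild care are the auxiliary inequality just stated (its degenerate small-set cases in particular) and the clean factorization of $\prod_{t>s}c_{e_t}$ when several of the $V_\ell$ coincide, where the key point is that no later pair equals $\{i,j\}$, so the two newly summed coordinates never appear together in a later factor. One could instead induct on $k$, embedding vertices one at a time into common neighborhoods while discarding atypical vertices, but this forces one to track how regularity degrades as the candidate sets shrink; the telescoping argument is shorter and produces the stated error term exactly.
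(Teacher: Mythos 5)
Your proof is correct. The paper gives no proof of Lemma \ref{countinglemma} at all, citing it as a standard counting lemma (Theorem 3.30 in Zhao's book), and your telescoping argument is exactly the textbook proof being invoked, with the error term $\varepsilon\binom k2\prod_i|V_i|$ coming out precisely as stated; the two delicate points you flag --- the small-set degenerate case of the inequality $|e(A,B)-d(V_i,V_j)|A||B||\le\varepsilon|V_i||V_j|$, and the factorization of the later factors when some of the $V_\ell$ coincide, which only uses that later pairs of \emph{positions} differ from $\{i,j\}$ --- are both handled correctly, as is the observation that non-injective tuples contribute zero since $G$ has no loops.
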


We will frequently use the following consequence of Lemma~\ref{countinglemma}, designed to count monochromatic extensions of cliques and thus estimate the size of monochromatic books.

\begin{cor}\label{cor:randomclique}
	Let $\eta,\delta \in (0,1)$ be parameters with $\eta \leq \delta^3/k^2$. Suppose $U_1,\ldots,U_k$ are (not necessarily distinct) vertex sets in a graph $G$ and all pairs $(U_i,U_j)$ are $\eta$-regular with $\prod_{1 \leq i<j\leq k}d(U_i,U_j) \geq \delta$. Let $Q$ be a randomly chosen copy of $K_k$ with one vertex in each $U_i$ with $1 \leq i \leq k$ and say that a vertex $u$ extends $Q$ if $u$ is adjacent to every vertex of $Q$. Then, for any $u$,
	\begin{equation}\label{eq:randomclique}
		\pr(u \text{ extends }Q) \geq \prod_{i=1}^k d(u,U_i)-4 \delta.
	\end{equation}
\end{cor}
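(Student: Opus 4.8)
The plan is to write the probability in \eqref{eq:randomclique} as a ratio of two labeled $K_k$-counts, estimate each of them with the counting lemma (Lemma~\ref{countinglemma}), and exploit the fact that the hypothesis $\eta\le\delta^3/k^2$ makes every error term quadratically small in $\delta$.

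First I would dispose of the trivial case. Write $\rho:=\prod_{i=1}^k d(u,U_i)$ for the main term on the right-hand side of \eqref{eq:randomclique}. If $\rho\le 4\delta$, then the right-hand side is nonpositive and there is nothing to prove, so assume $\rho>4\delta$; since each density is at most $1$, this forces $d(u,U_i)>4\delta$ for every $i$ (and in particular $\delta<\tfrac14$). Similarly, the hypothesis $\prod_{i<j}d(U_i,U_j)\ge\delta$ forces $d(U_i,U_j)\ge\delta$ for every pair. Now set $W_i=N(u)\cap U_i$, so $|W_i|=d(u,U_i)|U_i|>4\delta|U_i|$, and note that a labeled copy of $K_k$ with $i$th vertex in $U_i$ is extended by $u$ precisely when its $i$th vertex actually lies in $W_i$. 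Hence $\pr(u\text{ extends }Q)=N/D$, where $D$ counts labeled copies of $K_k$ with $i$th vertex in $U_i$ and $N$ counts those with $i$th vertex in $W_i$. By the hereditary property of regularity, each pair $(W_i,W_j)$ (including $i=j$) is $\eta'$-regular, with $\eta'\binom k2\le\delta^2$ (using $4\delta$ as the shrinking ratio when $\delta<\tfrac18$, and $\tfrac12$ in the narrow range $\tfrac18\le\delta<\tfrac14$).

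Next I would apply Lemma~\ref{countinglemma} to both counts. For the denominator, $D=(P\pm\eta\binom k2)\prod_i|U_i|$ with $P:=\prod_{i<j}d(U_i,U_j)\ge\delta$, so $D>0$ and $D\le(P+\tfrac{\delta^3}{2})\prod_i|U_i|$. For the numerator, $N=(\prod_{i<j}d(W_i,W_j)\pm\eta'\binom k2)\prod_i|W_i|$, and since $\prod_i|W_i|=\rho\prod_i|U_i|$ the cardinality products cancel in the ratio $N/D$. The one remaining ingredient is to compare $\prod_{i<j}d(W_i,W_j)$ with $P$: because $(U_i,U_j)$ is $\eta$-regular and $|W_i|,|W_j|$ exceed an $\eta$-fraction of $|U_i|,|U_j|$, each density shifts by at most $\eta$, so $d(W_i,W_j)\ge d(U_i,U_j)-\eta\ge\delta-\eta>\tfrac\delta2$; factoring out $P$ and using $d(U_i,U_j)\ge\delta$ together with $\eta\le\delta^3/k^2$ gives $\prod_{i<j}d(W_i,W_j)\ge P\bigl(1-\binom k2\tfrac\eta\delta\bigr)\ge P-\tfrac{\delta^2}{2}$. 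Feeding all of this into $N/D$ and using $P\ge\delta$ and $\rho\le1$, a short calculation yields $\pr(u\text{ extends }Q)\ge\rho\cdot\tfrac{P-3\delta^2/2}{P+\delta^2/2}\ge\rho(1-2\delta)\ge\rho-4\delta$, as desired.

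The point of the argument—and the reason the hypothesis is $\eta\le\delta^3/k^2$ rather than something weaker—is that every error above is $O_k(\delta^2)$: the counting-lemma error on $D$ is at most $\eta\binom k2=O_k(\delta^3)$, the total density shift in $\prod d(W_i,W_j)$ contributes a multiplicative factor $1-O_k(\eta/\delta)=1-O_k(\delta^2)$, and the counting-lemma error on $N$ is $\eta'\binom k2=O_k(\delta^2)$. Only after dividing by $P$, which could be as small as $\delta$, do these degrade to the $O(\delta)$ that the bound $\rho-4\delta$ can absorb. So the only real thing to watch is that none of $P$, the individual $d(U_i,U_j)$, or the $|W_i|$ is smaller than the product hypotheses force it to be; the remaining details (nonemptiness of the $U_i$, the $\tfrac18\le\delta<\tfrac14$ boundary case, and pinning down the constants) are routine bookkeeping.
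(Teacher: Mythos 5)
Your proposal is correct and follows essentially the same route as the paper's proof: reduce to the nontrivial case, pass to the neighborhoods of $u$ inside each $U_i$, apply the hereditary property of regularity and the counting lemma to both the restricted and unrestricted clique counts, and bound the resulting ratio using $\eta\le\delta^3/k^2$. The only differences are cosmetic (you truncate at $\rho\le 4\delta$ rather than at $d(u,U_i)\le\delta$, and you organize the final algebra via additive $O_k(\delta^2)$ errors instead of the paper's ratio estimate), and your constants check out.
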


\begin{proof}
	Since the right-hand side of (\ref{eq:randomclique}) is negative if $d(u,U_i) \leq 4\delta$ for some $i$, the conclusion is vacuously true in this case. Thus, we may assume that $d(u,U_i) > 4\delta$ for all $i$. 

	First, by the counting lemma, Lemma \ref{countinglemma}, the number of copies of $K_k$ with one vertex in each $U_i$ is at most
	\[
		\left(\prod_{1 \leq i<j \leq k} d(U_i,U_j)+\eta \binom k2\right)\prod_{i=1}^k |U_i|.
	\]
	On the other hand, for a vertex $u$, let $U_i'$ be its neighborhood in $U_i$, so that $|U_i'|=d(u,U_i)|U_i|\geq \delta |U_i|$. Then, by the hereditary property of regularity, we know that each pair $(U_i',U_j')$ is $\frac \eta \delta$-regular. Therefore, by Lemma \ref{countinglemma}, we know that the number of $K_k$ with one vertex in each $U_i'$ is at least
	\[
		\left(\prod_{1 \leq i<j \leq k} d(U_i',U_j')-\frac \eta \delta \binom k2\right)\prod_{i=1}^k |U_i'|.
	\]
	Note that since $(U_i, U_j)$ is $\eta$-regular and $\delta > \eta$, we also know that $d(U_i',U_j') \geq d(U_i,U_j)- \eta$
	and, therefore,
	\[
		\prod_{1 \leq i <j \leq k} d(U_i',U_j') \geq \prod_{1 \leq i<j \leq k} d(U_i,U_j)-\eta \binom k2 \geq \prod_{1 \leq i<j \leq k} d(U_i,U_j)-\frac \eta \delta \binom k2.
	\]
	Putting this together, we find that the number of $K_k$ with one vertex in each $U_i'$ is at least
	\[
		\left( \prod_{1 \leq i<j \leq k} d(U_i,U_j)-2 \frac \eta \delta \binom k2 \right) \prod_{i=1}^k d(u,U_i)|U_i|.
	\]
	Now, the probability that $u$ extends $Q$ is precisely the probability that $Q$ has one vertex in each $U_i'$. Therefore, dividing the number of such cliques by the total number of cliques with one vertex in each $U_i$ gives us the probability that $u$ extends $Q$. By the calculations above, we get
	\begin{align}
		\pr(u \text{ extends }Q) &\geq \frac{\left(\prod_{1 \leq i<j \leq k} d(U_i,U_j)-2\frac \eta \delta \binom k2\right)\prod_{i=1}^k (d(u,U_i)|U_i|)}{\left(\prod_{1 \leq i<j \leq k} d(U_i,U_j)+\eta \binom k2\right)\prod_{i=1}^k |U_i|} \notag\\
		&= \frac{\prod_{1 \leq i<j \leq k} d(U_i,U_j)-2\frac \eta \delta \binom k2}{\prod_{1 \leq i<j \leq k} d(U_i,U_j)+\eta \binom k2} \prod_{i=1}^k d(u,U_i) \notag \\
		& \geq \frac{\delta- 2 \frac \eta \delta \binom k2}{\delta+\eta \binom k2}\prod_{i=1}^k d(u,U_i)\label{eq:monotonicity}\\
		&\geq (1-4 \delta) \prod_{i=1}^k d(u,U_i)\label{eq:dividing-trick} \\
		& \geq \prod_{i=1}^k d(u,U_i)- 4 \delta \label{eq:one-minus-trick}.
	\end{align}
	In (\ref{eq:monotonicity}), we used that the function $(x-y)/(x+z)$ is monotonically increasing in $x$ for all $y,z>0$, as well as the assumption that $\prod d(U_i,U_j)\geq \delta$. In (\ref{eq:dividing-trick}), we used that $(x-2y)/(x(1+y)) \geq 1-2x$ for all positive $x,y$ with $y<x^2/2$, applying this with $x=\delta$ and $y = \frac \eta \delta \binom k2$, where the bound $y<x^2/2$ holds by our assumption that $\eta \leq \delta^3/k^2 <\delta^3/2\binom k2$. Finally, in (\ref{eq:one-minus-trick}), we used that $(1-x)y \geq y-x$ for all $y \in [0,1]$.
\end{proof}

\section{A simplified proof of Theorem~\ref{thm:conlon}}\label{sec:simplifiedproof}
In this section, we present another proof of Theorem \ref{thm:conlon} which gives bounds comparable to those obtained in~\cite{Conlon}. Though many of the ideas are the same in both proofs, we believe that the proof here is conceptually simpler than that in~\cite{Conlon}. The main differences are that we use  Lemma~\ref{reglem} instead of the usual regularity lemma and also that we use averaging arguments in a few more places. As a result, we only need to find a clique in the reduced graph, instead of a clique blow-up as in~\cite{Conlon}.

Suppose we are given a red/blue coloring of the edges of $K_N$, where $N=(2^k+\varepsilon)n$ for some $\varepsilon>0$. We wish to find a monochromatic $B_n \up k$ in this coloring. Doing this for all $\varepsilon$ and all sufficiently large $n$ will prove Theorem \ref{thm:conlon}. The key observation, which also implicitly underlies the proof in \cite{Conlon}, is that to find the ``large'' structure of a monochromatic $B_n\up k$, it suffices to find a different ``small'' structure, which we call a \emph{good configuration}.

\begin{Def}\label{def:good-config}
	Fix $k \geq 2$ and let $\eta,\delta>0$ be some parameters. A \emph{$(k,\eta,\delta)$-good configuration} is a collection of $k$ disjoint vertex sets $C_1,\ldots, C_k \subseteq V(K_N)$ with the following properties:
	\begin{enumerate}
		\item \label{cond:red-density-lb} Each $C_i$ is $\eta$-regular with itself and has red density at least $\delta$ and
		\item \label{cond:blue-density-lb} For all $i \neq j$, the pair $(C_i,C_j)$ is $\eta$-regular and has blue density at least $\delta$.
	\end{enumerate}
\end{Def}

\begin{Def}
	A $(k,\eta,\delta)$-good configuration $C_1,\ldots,C_k$ is called a \emph{$(k,\eta,\delta)$-great configuration} if the density conditions in Properties \ref{cond:red-density-lb} and \ref{cond:blue-density-lb} are replaced by the stronger conditions that
	\[
		d_R(C_i)^{\binom k2} \geq \delta \qquad \text{ and } \qquad \prod_{1 \leq i <j \leq k} d_B(C_i,C_j) \geq \delta,
	\]
	where the first condition holds for all $i \in [k]$. Note that we still require the same $\eta$-regularity conditions as in Definition \ref{def:good-config}, while strengthening the density assumptions.
\end{Def}

\begin{rem}
	Note that good and great configurations are equivalent up to a polynomial change in the parameters. Certainly, a $(k,\eta,\delta)$-great configuration is also $(k,\eta,\delta)$-good, for if the product of some numbers in $[0,1]$ is at least $\delta$, then each of these numbers must be at least $\delta$. On the other hand, every $(k,\eta,\delta)$-good configuration is also $(k,\eta,\delta^{\binom k2})$-great. 
\end{rem}

We will first describe a process that finds either a monochromatic $B_n \up k$ or a good configuration and then later see how to use this good configuration to find a monochromatic book. We set $\delta=2^{-4k}\varepsilon$ and $\eta=\delta^{2k^2}$.

We begin by applying Lemma \ref{reglem} to the red graph, with the parameter $\eta$ as above and with $M_0=1/\eta$. We obtain an equitable partition $V(K_N)=V_1 \sqcup \dotsb \sqcup V_m$ with a bounded number of parts such that, for each $i$, $V_i$ is $\eta$-regular and there are at most $\eta m$ values of $j$ such that $(V_i,V_j)$ is not $\eta$-regular. Note that since the colors are complementary, the same holds for the blue graph. Without loss of generality, at least $m' \geq m/2$ of the parts have internal red density at least $\frac 12$. By renaming if necessary, we may assume that $V_1,\ldots,V_{m'}$ are these red parts. We introduce new vertices $v_1,\ldots,v_m$ and form a reduced graph $G$ on the vertex set $v_1,\ldots,v_{m}$ by connecting $v_i$ to $v_j$ (for $i \neq j$) if $(V_i,V_j)$ is $\eta$-regular and $d_B(V_i,V_j) \geq \delta$. Let $G'$ be the subgraph of $G$ induced by the ``red'' vertices $v_i$ with $1 \leq i \leq m'$. Suppose that, in $G'$, some $v_i$ has at least $(2^{1-k}+2\eta)m'$ non-neighbors. Then, since $v_i$ has at most $\eta m \leq 2 \eta m'$ non-neighbors coming from irregular pairs, this means that there are at least $2^{1-k}m'$ parts $V_j$ with $1 \leq j \leq m'$ such that $(V_i,V_j)$ is $\eta$-regular and $d_R(V_i,V_j) \geq 1- \delta$. Let $J$ be the set of all these indices $j$ and let $U=\bigcup_{j \in J}V_j$ be the union of all of these $V_j$. We then have
\begin{equation}\label{eq:density-lb}
	e_R(V_i,U)=\sum_{j \in J} e_R(V_i,V_j) \geq \sum_{j \in J} (1- \delta) |V_i||V_j|=(1- \delta) |V_i||U|.
\end{equation}
Let $V_i' \subseteq V_i$ denote the set of vertices $v \in V_i$ with $e_R(v,U) \geq (1-2 \delta)|U|$. Then we may write
\begin{equation}\label{eq:density-ub}
	e_R(V_i,U)=\sum_{v \in V_i'} e_R(v,U)+\sum_{v \in V_i \setminus V_i'} e_R(v,U) \leq |V_i'||U|+(1-2 \delta)|V_i \setminus V_i'| |U|.
\end{equation}
Combining equations (\ref{eq:density-lb}) and (\ref{eq:density-ub}), we find that $|V_i'| \geq \frac 12 |V_i|$, where every vertex in $V_i'$ has red density at least $1-2 \delta$ into $U$. Moreover, we may apply the $\eta$-regularity of $V_i$ to conclude that the internal red density of $V_i'$ is at least $\frac 12- \eta \geq \delta$, while the hereditary property of regularity implies that $V_i'$ is $2\eta$-regular. By the counting lemma, Lemma \ref{countinglemma}, $V_i'$ contains at least 
\begin{equation*}\label{eq:cliquecount}
	\left( \delta^{\binom k2}- 2\eta \binom k2 \right) |V_i'|^k \geq \frac 12 \delta^{\binom k2} |V_i'|^k>0
\end{equation*}
red copies of $K_k$, where we used that $2 \eta \binom k2<\frac 12 \delta^{\binom k2}$. Fix one such red $K_k$. Since each vertex in this red $K_k$ has at least $(1- 2 \delta)|U|$ red edges into $U$, this red $K_k$ has at least $(1-2k \delta)|U|$ red common neighbors in $U$. Finally, since $U$ contains at least $2^{1-k} m' \geq 2^{-k}m$ parts $V_j$ and the partition is equitable (and, as explained in Remark~\ref{rem:equitable-partitions}, we are assuming equitable always means exactly equitable),
we have that $|U| \geq 2^{-k} N$. Thus, the red $K_k$ we found in $V_i'$ has at least $(1-2k \delta)|U|$ extensions to a red $K_{k+1}$ and
\[
	(1- 2k \delta)|U| \geq (1- 2k \delta)(1+2^{-k}\varepsilon)n\geq n,
\]
where we used that $2^{-k} \varepsilon \geq 4k \delta$ and $(1-x)(1+2x) \geq 1$ for all $x \in [0,\frac 12]$. Thus, this red $K_k$ gives us our desired $B_n \up k$. 

Therefore, we may assume that every vertex in $G'$ has degree at least $(1-2^{1-k}- 2\eta)m'$. By Tur\'an's theorem, as long as
\[
	2^{1-k}+2\eta < \frac{1}{k-1},
\]
$G'$ will contain a $K_k$. But, since $\eta<2^{-k}$, this condition holds for all $k \geq 2$, so $G'$ contains a copy of $K_k$ with vertices $v_{i_1},\ldots,v_{i_k}$. Set $C_j=V_{i_j}$. Then we have found a $(k,\eta,\delta)$-good configuration, since every pair $(C_i,C_j)$ with $i \leq j$ is $\eta$-regular, each $C_i$ has red density at least $\frac 12 \geq \delta$, and $d_B(C_i,C_j) \geq \delta$ for $i \neq j$. 
The following lemma therefore completes the proof. It is stated for great configurations for later convenience, but, as noted above, our $(k,\eta,\delta)$-good configuration is also $(k,\eta,\delta^{\binom k2})$-great. 

\begin{lem}\label{lem:goodconfigsuffices}
	Suppose a red/blue coloring of $K_N$, with $N=(2^k+\varepsilon)n$, contains a $(k,\eta,\delta)$-great configuration with $\delta \leq 2^{-2k-3} \varepsilon$ and $\eta\leq\delta^{3}/k^2$. Then it also contains a monochromatic $B_n \up k$.
\end{lem}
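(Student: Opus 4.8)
The plan is to use the great configuration $C_1, \dots, C_k$ to build a monochromatic book $B_n^{(k)}$. The natural candidate for the spine is a \emph{blue} $K_k$ with one vertex in each $C_i$: by Property \ref{cond:blue-density-lb} (strengthened to the great version, $\prod_{i<j} d_B(C_i, C_j) \geq \delta$), the counting lemma guarantees many such blue $K_k$, and we want one of them to have at least $n$ blue extensions to a $K_{k+1}$. The key idea is that the red edges inside each $C_i$ (guaranteed by Property \ref{cond:red-density-lb}) are useful precisely because \emph{red is the complement of blue}: if a vertex $v$ has small red density into some $C_i$, it has large blue density there, which is exactly what we need for $v$ to blue-extend a blue clique meeting $C_i$.

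\begin{proof}[Proof sketch of Lemma \ref{lem:goodconfigsuffices}]
Let $C_1, \dots, C_k$ be the great configuration, and write $C = C_1 \cup \dots \cup C_k$. Since each $C_i$ has red density at least $\delta$ and is $\eta$-regular, while the pairs $(C_i, C_j)$ are $\eta$-regular with $\prod d_B(C_i,C_j) \geq \delta$, we may apply Corollary \ref{cor:randomclique} in both colors. Choose $Q$ to be a uniformly random blue $K_k$ with one vertex in each $C_i$ (these exist in abundance by the counting lemma, as $\prod d_B(C_i, C_j) \geq \delta > \eta \binom k2$). For a fixed vertex $u \in V(K_N) \setminus C$, Corollary \ref{cor:randomclique} applied to the \emph{blue} graph gives
\[
	\pr(u \text{ blue-extends } Q) \geq \prod_{i=1}^k d_B(u, C_i) - 4\delta,
\]
where here the probability is over the choice of $Q$ among blue $k$-cliques; one must be slightly careful that conditioning $Q$ to be blue only rescales by the number of blue cliques, which the counting lemma controls from below. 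Summing over all $u$ outside $C$ and swapping the order of summation, we get that the expected number of blue extensions of $Q$ is at least $\sum_{u} \big(\prod_i d_B(u, C_i) - 4\delta\big)$, so some blue $K_k$ spine has at least this many blue extensions. It therefore suffices to show $\sum_{u \notin C} \prod_{i=1}^k d_B(u, C_i) \geq n + 4\delta N$, say, since then after subtracting the $4\delta N$ error there remain at least $n$ extensions.

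To lower bound $\sum_u \prod_i d_B(u, C_i)$, fix any vertex $u$ and consider $\prod_i d_B(u, C_i) = \prod_i (1 - d_R(u, C_i))$. The point is that a \emph{red} copy of $K_k$ with one vertex $w_i$ in each $C_i$ certifies, via its common red neighborhood, many vertices $u$ with \emph{all} $d_R(u, C_i)$ values pinned down — but more cleanly, we instead use a counting/convexity argument: the average of $\prod_i (1 - d_R(u, C_i))$ over all $u$ in a suitable large set is, by AM–GM or by expanding, comparable to $\prod_i (1 - \overline{d_R}(C_i))$ where $\overline{d_R}(C_i)$ is the average red density from outside vertices into $C_i$. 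The cleanest route, and the one I expect the authors take, is to pick $u$ ranging over a red $K_k$'s common red-neighborhood: choose a random red $K_k$ inside the configuration (one vertex per $C_i$, using the red densities and Corollary \ref{cor:randomclique} in red), and observe that each such red clique has roughly $\prod_i d_R(\cdot, C_i) \cdot N$-ish common red neighbors — no, rather: every vertex $u$ contributes $\prod_i d_B(u,C_i)$, and we need the sum over a set of size essentially $N$ to beat $n \approx 2^{-k} N$. Since the configuration sits inside a coloring of $K_N$ and $\sum_u d_B(u, C_i) = e_B(V \setminus C_i, C_i)/|C_i| \approx (1 - d_R)N$, a single application of convexity (the function $x \mapsto \log(1-x)$ or direct Jensen on the product) shows $\sum_u \prod_i d_B(u,C_i) \geq N \prod_i \big(1 - \tfrac{1}{N}\sum_u d_R(u,C_i)\big) \geq N \cdot 2^{-k}(1 + \Omega(\varepsilon))$ once we know each average red density into $C_i$ is at most $\tfrac12$; and this last fact is where Property \ref{cond:red-density-lb} combined with $\eta$-regularity of $C_i$ enters — an $\eta$-regular set with internal red density at least $\delta$ cannot have almost all outside vertices red-dense to it without forcing a red $B_n^{(k)}$ directly.

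\textbf{The main obstacle} I anticipate is precisely this last bookkeeping: arguing that either the average red density $\tfrac{1}{N}\sum_u d_R(u, C_i)$ into each $C_i$ is bounded away from $\tfrac12$ (so that the product of blue densities beats $2^{-k}$ with room to spare, giving the blue book), or else some $C_i$ has so many outside red-neighbors that, using its internal red $K_k$'s (which exist since $d_R(C_i) \geq \delta$ and $C_i$ is $\eta$-regular, via the counting lemma) and a red version of Corollary \ref{cor:randomclique}, we directly find a red $B_n^{(k)}$ — a red $K_k$ inside $C_i$ with at least $n$ red common neighbors. Balancing these two cases against the constraint $N = (2^k + \varepsilon)n$, and choosing the threshold for "red-dense" so that the error terms $4\delta$ and $\eta\binom k2$ are absorbed (which is exactly what the hypotheses $\delta \leq 2^{-2k-3}\varepsilon$ and $\eta \leq \delta^3/k^2$ are calibrated for), is the crux; everything else is an application of the counting lemma and Corollary \ref{cor:randomclique}.
\end{proof}
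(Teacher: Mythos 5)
Your first half matches the paper: pick a random blue $K_k$ spanning $C_1,\ldots,C_k$, apply Corollary \ref{cor:randomclique}, and reduce to lower-bounding $\sum_u \prod_{i=1}^k d_B(u,C_i)$. But the way you propose to get that lower bound is where the argument breaks. The inequality you invoke, $\sum_u \prod_i d_B(u,C_i) \geq N\prod_i\bigl(1-\tfrac1N\sum_u d_R(u,C_i)\bigr)$, is false: an average of products is not bounded below by the product of averages, and Jensen applied coordinatewise goes the wrong way when the densities to different $C_i$ are anti-correlated across vertices. Concretely, split $V$ into $k$ classes and make the vertices of class $j$ completely red to $C_j$ and completely blue to every other $C_i$. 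Then the average red density into each $C_i$ is about $1/k<\tfrac12$, so your proposed dichotomy lands in the ``blue'' branch, yet $\prod_i d_B(u,C_i)=0$ for every vertex $u$, and no blue book through the configuration exists. The monochromatic book in this example is red (class $i$ is red-complete to $C_i$, and $N/k\geq 2^{-k}N$), but your ``red'' branch, triggered by some $C_i$ having average outside red density well above $\tfrac12$, never fires. So the case split on per-part \emph{average} densities cannot work; the dichotomy has to be made vertex by vertex, not part by part.

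That pointwise dichotomy is exactly what the paper supplies via the analytic inequality of Lemma \ref{xilemma}: for every single vertex $v$, with $x_i(v)=d_B(v,C_i)$,
\[
\prod_{i=1}^k x_i(v)+\frac 1k\sum_{i=1}^k\bigl(1-x_i(v)\bigr)^k\geq 2^{1-k},
\]
and summing this over all $v$ shows that either $\sum_v\prod_i x_i(v)\geq 2^{-k}N$ (then Corollary \ref{cor:randomclique} in blue gives a blue $K_k$ spanning the $C_i$ with at least $(2^{-k}-4\delta)N\geq n$ blue extensions) or $\sum_v(1-x_i(v))^k\geq 2^{-k}N$ for some $i$ (then Corollary \ref{cor:randomclique} in red, applied to a random red $K_k$ inside $C_i$, gives a red book of the same size). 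Note also that even in your red branch the relevant quantity is the $k$-th moment $\sum_v d_R(v,C_i)^k$, not the first moment, though there convexity of $x\mapsto x^k$ does rescue you; it is the blue branch, where convexity points the wrong way, that requires the inequality of Lemma \ref{xilemma} and is genuinely missing from your sketch.
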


\begin{proof}
	Let $C_1,\ldots,C_k$ be the $(k,\eta,\delta)$-great configuration. First, observe that by the counting lemma, Lemma \ref{countinglemma}, the number of blue $K_k$ with one vertex in each $C_i$ is at least
	\begin{align*}
		\left( \prod_{1 \leq i <j \leq k}d_B(C_i,C_j) -\eta \binom k2 \right) \prod_{i=1}^k |C_i| &\geq \left( \delta- \eta \binom k2 \right) \prod_{i=1}^k |C_i| \geq \left(\delta-\delta^3\right)\prod_{i=1}^k |C_i|>0,
	\end{align*}
	since the definition of a great configuration includes that $\prod d_B(C_i,C_j) \geq \delta$. Therefore, there is at least one blue $K_k$ with one vertex in each $C_i$. Similarly, for any $i$, the number of red $K_k$ inside $C_i$ is at least
	\begin{align*}
		\left( d_R(C_i)^{\binom k2} -\eta \binom k2 \right) |C_i|^k &\geq \left( \delta- \eta \binom k2 \right) |C_i|^k>0.
	\end{align*}
	Thus, every $C_i$ contains at least one red $K_k$. 

	Next we will need an analytic inequality, essentially \cite[Lemma~8]{Conlon}. The proof of this lemma and a stronger, stability version that we will need later may be found in the appendix. 

	\begin{lem}\label{xilemma}
		For any $x_1,\ldots,x_k \in [0,1]$,
		\[
			\prod_{i=1}^k x_i + \frac 1k \sum_{i=1}^k (1-x_i)^k \geq 2^{1-k}.
		\]
	\end{lem}

	Now, for any vertex $v$ and any $i \in [k]$, consider the blue density $x_i(v):=d_B(v,C_i)$. By Lemma \ref{xilemma}, we know that
	\[
		\prod_{i=1}^k x_i(v)+ \frac 1k \sum_{i=1}^k (1-x_i(v))^k \geq 2^{1-k}.
	\]
	Summing this inequality over all $v$, we get that
	\[
		\sum_{v \in V}\prod_{i=1}^k x_i(v) +\frac 1k \sum_{i=1}^k \sum_{v \in V} (1-x_i(v))^k \geq 2^{1-k}N.
	\]
	Since the sum of these two quantities is at least $2^{1-k}N$, one of them must be at least $2^{-k}N$. First, suppose that
	\begin{equation}\label{eq:probability}
		\sum_{v \in V} \prod_{i=1}^k x_i(v) \geq 2^{-k}N.
	\end{equation}
	For a given vertex $v$, if we pick $v_i \in C_i$ with $1 \leq i \leq k$ uniformly and independently at random, then $\prod_{i=1}^k x_i(v)$ is the probability that the edges $(v,v_i)$ are blue. Hence, inequality (\ref{eq:probability}) implies that for a random $v$ and random $v_i \in C_i$, there is a probability at least $2^{-k}$ that all the edges $(v,v_i)$ are blue. Heuristically, this fact, combined with the regularity of the pairs $(C_i,C_j)$, implies that a random blue $K_k$ spanned by $(C_1,\ldots,C_k)$ will also have probability close to $2^{-k}$ of being in the blue neighborhood of a random $v$. More formally, by applying Corollary \ref{cor:randomclique} for each $v$ and summing, we see that the expected number of blue extensions of a randomly chosen blue $K_k$ spanned by $(C_1,\ldots,C_k)$ is at least
	\begin{align*}
		\sum_{v \in V} \left( \prod_{i=1}^k x_i(v)-4 \delta \right)&\geq (2^{-k}-4 \delta)N
		=(2^{-k}-4 \delta)(2^k+\varepsilon)n
		\geq(1+2^{-k} \varepsilon-2^{k+3}\delta)n
		\geq n,
	\end{align*}
	by our choice of $\delta \leq 2^{-2k-3}\varepsilon$. Therefore, there must exist some blue $K_k$ with at least $n$ blue extensions, giving us our desired blue $B_n \up k$.

	On the other hand, suppose that 
	\[
		\frac 1k \sum_{i=1}^k \sum_{v \in V} (1-x_i(v))^k \geq 2^{-k}N.
	\]
	Then there must exist some $1 \leq i\leq k$ for which $\sum_{v \in V} (1-x_i(v))^k \geq 2^{-k} N$.
	For this $i$, similar logic applies: this fact, together with the regularity of $C_i$, implies that for a random red $K_k$ in $C_i$ and for a random $v \in V$, $v$ will form a red extension of the $K_k$ with probability close to $2^{-k}$. More precisely, by Corollary \ref{cor:randomclique}, the expected number of extensions of a random red $K_k$ in $C_i$ is at least\footnote{Strictly speaking, if $v \in C_i$, then $d_R(v,C_i) \neq 1-x_i(v)$,
	as $v$ has no edge to itself. However, this tiny loss can be absorbed into the error terms 
	and the result does not change.} 
	\[
		\sum_{v \in V} \left((1-x_i(v))^k-4 \delta\right) \geq (2^{-k}-4 \delta)N \geq n,
	\]
	by the same computation as above. Therefore, we see that a randomly chosen red $K_k$ inside $C_i$ will have at least $n$ red extensions in expectation. Hence, there must exist a red $B_n \up k$, completing the proof.
\end{proof}

Recall that previously we found a $(k,\eta,\delta)$-good configuration with $\delta=2^{-4k}\varepsilon$ and $\eta=\delta^{2k^2}$. This is also a $(k,\eta,\delta')$-great configuration, where $\delta'=\delta^{\binom k2}$. Therefore, we can apply Lemma \ref{lem:goodconfigsuffices}, since $\delta' \leq 2^{-2k-3} \varepsilon$ and $\eta=\delta^{2k^2} \leq (\delta')^{4} \leq (\delta')^3/k^2$. Applying Lemma \ref{lem:goodconfigsuffices} yields our desired monochromatic $B_n \up k$ and completes the proof of Theorem \ref{thm:conlon}.

\section{A new proof, with better bounds}\label{sec:basicresult}

In this section, we prove Theorem~\ref{thm:tripleexp}, which we now restate in the following more precise form.

\begin{thm:tripleexp}
	Fix $k \geq 3$. Then, for any $n \geq 2^{2^{2^{k^{25k^2}2^{100 k^3}}}}$,
	\[
		r(B_n \up k) \leq 2^k n+\frac{n}{(\log \log \log n)^{1/25}}.
	\]
\end{thm:tripleexp}

The proof of this result follows similar lines to the proof of Theorem \ref{thm:conlon} presented in Section~\ref{sec:simplifiedproof}, except that we must now avoid invoking Szemer\'edi's regularity lemma at all costs, as doing so would necessarily result in tower-type bounds. Instead, we will invoke Lemma \ref{lem:partition} to obtain a somewhat structured partition of our vertex set (which results in only a double-exponential loss in our parameters) and then attempt to locate a great configuration in this partition. If we are able to do so, then Lemma \ref{lem:goodconfigsuffices} guarantees us our desired monochromatic book. Assuming that at least half the parts in the partition are red, we first show that such a great configuration exists unless there are very few blue $K_k$ between the red parts of our partition and then, in Section \ref{fewbluekks}, we show how to guarantee a monochromatic book also in that case (without finding a great configuration). The rest of the section spells out the details of this approach. We will assume throughout that $\varepsilon\leq 2^{-4k^3}/k^{k^2}$ and set $\delta=2^{-2k-3} \varepsilon$, $\zeta=\delta^3/k^2$, and $\eta=\zeta^{2k^2 \zeta^{-5}}$. 

Suppose we are given a red/blue coloring of $K_N$, where $N=(2^k+\varepsilon)n$, and we wish to find a monochromatic $B_n \up k$. We apply Lemma \ref{lem:partition} to the red graph with parameter $\eta$ as above to obtain an equitable partition of the vertices $V=V_1 \sqcup \dotsb \sqcup V_m$ where each part is $\eta$-regular in red and $m=K(\eta)$; to do this, we assume that $N \geq 2^{1/\eta^{(10/\eta)^{15}}}$. Since the colors are complementary, each part is also $\eta$-regular in blue. We call a part $V_i$ \emph{red} if at least half its edges are colored red and \emph{blue} otherwise. Without loss of generality, we may assume that at least $m/2$ of the parts are red. Let $R$ be the set of all vertices in red parts, that is,
\begin{equation}
	R=\bigcup_{i:V_i \text{ red}} V_i.\label{eq:def-of-R}
\end{equation}

The main tool that we will use to find a great configuration in $R$ is a weak regularity lemma due to Duke, Lefmann, and R\"odl, stated below. We need the following terminology. A \emph{cylinder} is simply a product set, that is, a set of the form $S=S_1 \times \dotsb \times S_k$. We say that such a cylinder of vertex sets is \emph{$\varepsilon$-regular} if every pair $(S_i,S_j)$ with $1 \leq i \neq j \leq k$ is $\varepsilon$-regular.

\begin{lem}[Duke--Lefmann--R\"odl \cite{DuLeRo}]\label{dlr}
	For any $0<\zeta<\frac 12$ and any $k \in \N$, let $M=\zeta^{-k^2 \zeta^{-5}}$. Suppose $U_1,\ldots,U_k$ are disjoint vertex subsets of a graph $G$. Then there is a partition $\P$ of the cylinder $U_1 \times \dotsb \times U_k$ into at most $M$ parts, each a cylinder of the form $W_1 \times \dotsb \times W_k$ with $W_i \subseteq U_i$, such that the following hold:
	\begin{enumerate}
		\item All but a $\zeta$-fraction of the tuples $(v_1,\ldots,v_k) \in U_1 \times\dotsb \times U_k$ are contained in $\zeta$-regular parts of $\P$ and
		\item For each $W_1 \times \dotsb \times W_k \in \P$ and each $i \in [k]$, $|W_i|\geq |U_i|/M$. 
	\end{enumerate}
\end{lem}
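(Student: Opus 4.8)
\emph{Proof proposal.} The plan is to prove this by the standard energy-increment (index-increment) argument adapted to cylinders; the point that keeps the number of parts singly exponential rather than tower-type is that we refine \emph{within} cells rather than globally. We build a sequence of partitions $\P_0,\P_1,\dots$ of the cylinder $U_1\times\dots\times U_k$, each consisting of subcylinders $W=W_1\times\dots\times W_k$ with $W_i\subseteq U_i$, starting from the trivial partition $\P_0=\{U_1\times\dots\times U_k\}$. To each partition we attach the \emph{mean-square density}
\[
	q(\P)=\sum_{W\in\P}\mu(W)\,\binom{k}{2}^{-1}\sum_{1\le i<j\le k}d(W_i,W_j)^2\in[0,1],
\]
where $\mu(W)=\prod_{i=1}^k|W_i|\big/\prod_{i=1}^k|U_i|$ is the fraction of $k$-tuples lying in $W$. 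Given $\P_t$, if at most a $\zeta$-fraction of the $k$-tuples lie in cells that are not $\zeta$-regular cylinders, we stop and output $\P_t$. Otherwise, for each non-$\zeta$-regular cell $W$ we pick a pair $(i,j)$ for which $(W_i,W_j)$ is not $\zeta$-regular, choose a witness $W_i^*\subseteq W_i$, $W_j^*\subseteq W_j$ with $|d(W_i^*,W_j^*)-d(W_i,W_j)|>\zeta$ and both relative sizes in $[\zeta,1-\zeta]$, and refine $W$ into the (at most four) subcylinders obtained by splitting coordinate $i$ along $\{W_i^*,W_i\setminus W_i^*\}$ and coordinate $j$ along $\{W_j^*,W_j\setminus W_j^*\}$, leaving all other cells and coordinates untouched; this yields $\P_{t+1}$.

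The heart of the argument is a defect Cauchy--Schwarz / convexity estimate. Since $d(W_i,W_j)$ is the $\mu$-weighted average of the four subcell densities $d(W_i^s,W_j^t)$, the total pair-$(i,j)$ contribution to $q$ from the descendants of $W$ exceeds $\mu(W)d(W_i,W_j)^2$ by at least $\mu(W)\cdot\zeta^2\cdot\zeta^2$ (weight of the deviating cell times the square of the deviation), while splitting a single coordinate never decreases the contribution of any other pair by convexity. Hence refining $W$ increases $q$ by at least $\binom{k}{2}^{-1}\mu(W)\zeta^4$, and summing over the non-$\zeta$-regular cells, whose total $\mu$-measure is at least $\zeta$, gives $q(\P_{t+1})\ge q(\P_t)+\zeta^5/\binom{k}{2}$.

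Since $q$ is bounded by $1$, the process halts after $T\le\binom{k}{2}\zeta^{-5}\le k^2\zeta^{-5}$ steps. Each step at most quadruples the number of cells, so $|\P|\le 4^{k^2\zeta^{-5}}\le\zeta^{-k^2\zeta^{-5}}=M$ (using $\zeta<\tfrac12$); and since each slice is multiplied by a factor at least $\zeta$ at each step (here we use that the chosen witnesses, and hence also their complements, have relative size at least $\zeta$), every cell satisfies $|W_i|\ge\zeta^T|U_i|\ge|U_i|/M$, which is condition (2). Condition (1) is precisely the stopping rule, after relabelling $\zeta$ by a constant multiple to absorb the constant-factor loss incurred in arranging the witness sizes to lie in $[\zeta,1-\zeta]$.

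I expect the main obstacle to be bookkeeping rather than ideas: one must (a) verify that a witness of irregularity can always be taken with relative sizes bounded away from $1$ without degrading the regularity parameter by more than a constant factor — this is what prevents the slices from shrinking uncontrollably — and (b) track the constants carefully enough that both the number of parts and the slice-size lower bound come out as the single quantity $M=\zeta^{-k^2\zeta^{-5}}$. Once the ``refine only inside cells'' framework is in place, the energy increment itself is routine.
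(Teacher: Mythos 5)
The paper does not prove this lemma at all: it is imported verbatim from Duke--Lefmann--R\"odl \cite{DuLeRo} and used as a black box, so there is no internal proof to compare against. Your proposal is essentially a reconstruction of the original DLR argument: an energy (index) increment carried out cell-by-cell inside the cylinder rather than globally. The skeleton is sound --- the defect Cauchy--Schwarz step, the observation that refining never decreases any other pair's contribution, the increment of order $\zeta^5/\binom k2$ per round, the factor-$4$ growth in the number of cells per round, and the factor-$\zeta$ shrinkage of slices per round are exactly the right ingredients, and they do produce bounds of the claimed shape.

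The two items you flag do need attention, and both close cleanly. For (a), rather than arranging witnesses with relative size in $[\zeta,1-\zeta]$ (awkward for $\zeta$ near $\tfrac12$, and your halving fix costs constant factors that then strain the target $M$), shrink the witnesses: if $|d(A,B)-d(W_i,W_j)|>\zeta$ with $|A|\ge\zeta|W_i|$, then since $d(A,B)$ is the average of $d(A'',B)$ over subsets $A''\subseteq A$ of any fixed size, one may take $|A''|=\lceil \zeta|W_i|\rceil$ (and likewise for $B$) without losing any of the deviation; then, up to trivial rounding and using $\zeta<\tfrac12$, both the witness and its complement have relative size at least $\zeta$, which is exactly what the slice bound $|W_i|\ge\zeta^T|U_i|$ requires, while the increment weight is still at least $\zeta^2$. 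For (b), note that your displayed chain $4^{k^2\zeta^{-5}}\le\zeta^{-k^2\zeta^{-5}}$ is false for $\zeta\in(\tfrac14,\tfrac12)$; but the iteration count is really $T\le\binom k2\zeta^{-5}\le\tfrac12 k^2\zeta^{-5}$, so $4^T\le 2^{k^2\zeta^{-5}}\le\zeta^{-k^2\zeta^{-5}}$ and $\zeta^T\ge\zeta^{k^2\zeta^{-5}}=1/M$, and the stated value of $M$ comes out for all $\zeta<\tfrac12$. With these repairs your argument is a complete proof of the lemma, matching the route taken in \cite{DuLeRo}.
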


Using this lemma, we now show that we can find a great configuration in $R$, provided that $R$ contains a reasonable number of blue $K_k$. 

\begin{lem}\label{goodconfig}
	Let $0<\zeta<\frac 12$, $0<\delta<2^{-2k^2}$, and $\alpha \geq 2\delta+\zeta k^2$.
	Suppose the set $R$ from (\ref{eq:def-of-R}) contains at least $\alpha |R|^k$ blue $K_k$ and that $\eta\leq \min\{\alpha,\zeta^{2k^2 \zeta^{-5}},\frac 14\}$.
	Then there exist disjoint $C_1,\ldots,C_k \subseteq R$ which form a $(k,\zeta,\delta)$-great configuration.
\end{lem}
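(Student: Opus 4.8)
The plan is to locate the desired great configuration inside the red parts of the equitable partition underlying the definition of $R$ in~\eqref{eq:def-of-R}. Write $s$ for the common size of the parts $V_1,\dots,V_m$ and recall that $m=K(\eta)$ is enormous (so that $s/|R|\le 2/m$ is much smaller than $\delta$). The first step is a pigeonhole over the \emph{types} of the blue $K_k$ in $R$: an ordered blue $K_k$ with vertices $w_1,\dots,w_k$ records the indices $(i_1,\dots,i_k)$ of the red parts containing its vertices. The number of ordered blue $K_k$ with a repeated index (some $i_j=i_{j'}$, $j\neq j'$) is at most $\binom k2 s|R|^{k-1}\le \tfrac{k^2}{m}|R|^k$, which is negligible compared to $\delta|R|^k\le\alpha|R|^k$. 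Hence at least $(\alpha-\epsilon_0)|R|^k$ ordered blue $K_k$ have pairwise-distinct types, where $\epsilon_0:=k^2/m<\delta$, and averaging over the fewer than $m^k$ possible distinct types yields pairwise-distinct indices $i_1,\dots,i_k$ such that the cylinder $V_{i_1}\times\dots\times V_{i_k}$ contains at least $(\alpha-\epsilon_0)s^k=(\alpha-\epsilon_0)\prod_j|V_{i_j}|$ ordered blue $K_k$.

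The second step applies the Duke--Lefmann--R\"odl lemma (Lemma~\ref{dlr}) with parameter $\zeta$ to the disjoint sets $V_{i_1},\dots,V_{i_k}$, obtaining a partition of $V_{i_1}\times\dots\times V_{i_k}$ into at most $M=\zeta^{-k^2\zeta^{-5}}$ subcylinders $W_1\times\dots\times W_k$ with $W_j\subseteq V_{i_j}$, $|W_j|\ge |V_{i_j}|/M$, and with all but a $\zeta$-fraction of the tuples lying in $\zeta$-regular subcylinders. Thus at least $(\alpha-\epsilon_0-\zeta)s^k$ of our blue $K_k$ lie in $\zeta$-regular subcylinders; since these are disjoint and together occupy at most $s^k$ tuples, a one-line averaging argument produces a $\zeta$-regular subcylinder $W_1\times\dots\times W_k$ containing at least $(\alpha-\epsilon_0-\zeta)\prod_j|W_j|$ ordered blue $K_k$. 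Feeding the $\zeta$-regularity of the pairs $(W_i,W_j)$ into the counting lemma (Lemma~\ref{countinglemma}) then gives
\[
	\prod_{1\le i<j\le k} d_B(W_i,W_j)\ \ge\ (\alpha-\epsilon_0-\zeta)-\zeta\tbinom k2\ \ge\ \alpha-\epsilon_0-\zeta k^2\ \ge\ \delta,
\]
using $\alpha\ge 2\delta+\zeta k^2$ and $\epsilon_0<\delta$.

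It remains to verify the internal conditions on $C_j:=W_j$. The $C_j$ are disjoint and contained in $R$, as they lie in distinct red parts. Since $\eta\le\zeta^{2k^2\zeta^{-5}}=M^{-2}$, we have $\eta\le M^{-1}$ and $\eta M\le\zeta$; applying the $\eta$-regularity of $(V_{i_j},V_{i_j})$ to the subset $W_j$ (of relative size at least $1/M\ge\eta$) gives $d_R(W_j)\ge d_R(V_{i_j})-\eta\ge\tfrac12-\eta\ge\tfrac14$, so $d_R(C_j)^{\binom k2}\ge 4^{-\binom k2}>2^{-2k^2}>\delta$, while the hereditary property of regularity shows $(W_j,W_j)$ is $(\eta M)$-regular, hence $\zeta$-regular. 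Combined with the $\zeta$-regularity of the pairs $(C_i,C_j)$ coming from the Duke--Lefmann--R\"odl output, this shows that $C_1,\dots,C_k$ is a $(k,\zeta,\delta)$-great configuration.

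I expect the main difficulty to be bookkeeping rather than conceptual: one must track the degradation of the regularity parameter ($\eta\rightsquigarrow\eta M\rightsquigarrow\zeta$) and the erosion of the blue-$K_k$ density ($\alpha\rightsquigarrow\alpha-\epsilon_0\rightsquigarrow\alpha-\epsilon_0-\zeta\rightsquigarrow\alpha-\epsilon_0-\zeta k^2\rightsquigarrow\delta$) through the pigeonhole step, the Duke--Lefmann--R\"odl decomposition, and the counting lemma, checking at each stage that the cited lemmas apply and that the final inequalities survive. The one genuinely structural observation is that the parts $V_i$ are so small relative to $|R|$ that the blue $K_k$ with a repeated type can be discarded for free; this is what lets us reduce to an honest cylinder over \emph{distinct} sets, which is the form the Duke--Lefmann--R\"odl lemma requires.
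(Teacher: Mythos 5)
Your proposal follows the paper's proof essentially step for step: discard the blue $K_k$ meeting some part twice, average to find distinct red parts $V_{i_1},\dots,V_{i_k}$ spanning many blue $K_k$, apply the Duke--Lefmann--R\"odl lemma with parameter $\zeta$, use averaging plus the counting lemma to extract a $\zeta$-regular subcylinder with $\prod_{i<j} d_B(W_i,W_j)\geq\delta$, and then get the internal conditions from the hereditary property of regularity together with the red density $\geq \frac12-\eta$ of the original parts. All of that is sound, and the ordered/labeled bookkeeping and the verification of the great-configuration conditions are correct.

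There is, however, one step that does not follow from the stated hypotheses: the claim $\epsilon_0=k^2/m<\delta$ (and the preamble remark that $2/m$ is much smaller than $\delta$). The lemma places no lower bound on $\delta$ in terms of $\eta$: one may take $\zeta,\alpha,\eta$ fixed (so $m=K(\eta)$ is a fixed constant) and $\delta$ arbitrarily small, in which case $k^2/m<\delta$ fails and your final chain $\alpha-\epsilon_0-\zeta k^2\geq 2\delta-\epsilon_0\geq\delta$ breaks. Notice that your argument never uses the hypothesis $\eta\leq\alpha$; that hypothesis is exactly what the paper invokes at this point, via $m=K(\eta)\geq 2^{1/\eta^{(10/\eta)^{12}}}>4/\alpha$, to conclude that the discarded cliques number at most $\tfrac{\alpha}{2}|R|^k$, after which it carries $\tfrac\alpha2$ through the computation and closes with $\tfrac\alpha2-\zeta\tfrac{k^2}2\geq\delta$, which is precisely what $\alpha\geq 2\delta+\zeta k^2$ provides. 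The patch for your write-up is one line: either compare the discarded cliques with $\tfrac\alpha2|R|^k$ as above, or bound $\epsilon_0\leq 2\zeta$ (which does follow, since $\eta\leq\zeta^{2k^2\zeta^{-5}}$ makes $m$ astronomically larger than $1/\zeta$) and absorb it into the slack $\zeta\bigl(k^2-1-\tbinom k2\bigr)\geq 2\zeta$ in your final inequality. With that repair the proof is complete and coincides with the paper's.
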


\begin{proof}
	First, we show that since $R$ contains ``many'' blue $K_k$, we can find distinct $i_1,\ldots,i_k$ so that the red blocks $V_{i_1},\ldots,V_{i_k} \subseteq R$ span ``many'' blue $K_k$, where we say that the tuple \emph{spans} a blue $K_k$ if each part contains one vertex of the $K_k$. Note first that the number of blue $K_k$ with at least two vertices in a fixed part $V_i$ is at most
	\[
		\binom{|V_i|}{2} |R|^{k-2} \leq \frac{|V_i|^2}{2}|R|^{k-2} \leq\frac{(|R|/(m/2))^2}{2}|R|^{k-2}\leq  \frac{2|R|^k}{m^2}
	\]
	and, therefore, the number of blue $K_k$ with at least two vertices in the same part is at most $2|R|^k/m$. Recall from Lemma \ref{lem:partition} that $m=K(\eta)\geq 2^{1/\eta^{(10/\eta)^{12}}}>4/\alpha$, so that $2|R|^k/m < \alpha |R|^k/2$. Thus, at least $\alpha |R|^k/2$ blue $K_k$ go between parts. By averaging over all choices of $i_1,\ldots,i_k$, we find that there must exist a choice such that $(V_{i_1},\ldots,V_{i_k})$ spans at least $\frac \alpha 2 |V_{i_1}| \dotsb |V_{i_k}|$ blue $K_k$. We reorder the parts so that these are $V_1,\ldots,V_k$.

	We now apply Lemma \ref{dlr} with the parameter $\zeta$ as in the statement of the lemma. We thus get a partition $\P$ of $V_1 \times \dotsb \times V_k$, with each part $P_\ell \in \P$ a cylinder $W_{1 \ell} \times \dotsb \times W_{k \ell}$. We can write
	\[
		\#\{\text{blue }K_k\text{ in }V_1\times \dotsb \times V_k\}=\sum_\ell \#\{\text{blue }K_k\text{ in }P_\ell\},
	\]
	where a blue $K_k$ in $P_\ell$ is a blue $K_k$ with one vertex in each $W_{i \ell}$. At most a $\zeta$-fraction of the $k$-tuples in $V_1 \times \dotsb \times V_k$ are contained in irregular parts $P_\ell$. Therefore,
	\[
		\sum_{P_\ell\text{ is } \zeta\text{-regular}} \#\{\text{blue }K_k\text{ in }P_\ell\} \geq \left( \frac \alpha 2- \zeta \right) |V_1| \dotsb|V_k| .
	\]
	For each $\zeta$-regular $P_\ell$, we can count the number of blue $K_k$ using Lemma \ref{countinglemma}. This implies that
	\[
		\#\{\text{blue }K_k\text{ in }P_\ell\} \leq \left(\zeta \binom k2+\prod_{1 \leq i<j \leq k} d_B(W_{i\ell},W_{j \ell})\right) \prod_{i=1}^k |W_{i \ell}| ,
	\]
	where $d_B$ denotes the blue density. Therefore,
	\begin{align*}
		\left( \frac \alpha 2- \zeta \right) |V_1| \dotsb|V_k|  &\leq \sum_{P_\ell\, \zeta\text{-regular}}\left(\zeta \binom k2+ \prod_{i<j} d_B (W_{i \ell},W_{j \ell})\right) \prod_{i=1}^k |W_{i \ell}|\\
		&\leq \max_{P_\ell\, \zeta\text{-regular}} \left(\zeta \binom k2+ \prod_{i<j} d_B(W_{i \ell},W_{j \ell}) \right) \sum_{P_\ell \in \P} \prod_{i=1}^k |W_{i \ell}|\\
		&=\max_{P_\ell\, \zeta\text{-regular}} \left( \zeta \binom k2+\prod_{i<j} d_B(W_{i \ell},W_{j \ell}) \right) |V_1|\dotsb|V_k|.
	\end{align*}
	Thus, there is some $\ell$ for which $P_\ell$ is $\zeta$-regular and
	\[
		\prod_{i<j} d_B(W_{i \ell},W_{j \ell}) \geq \left(\frac \alpha 2- \zeta\right)- \zeta \binom k2 \geq \frac\alpha2- \zeta \frac{k^2}2 \geq \delta, 
	\]
	by our choice of $\alpha$. Setting $C_i=W_{i \ell}$, we have found sets $C_1,\ldots,C_k$ such that each pair $(C_i,C_j)$ is $\zeta$-regular and the product of their pairwise blue densities is at least $\delta$.

	We also know that for each $i$, $|C_i| \geq |V_i|/M$. Since $V_i$ was $\eta$-regular, the hereditary property of regularity implies that $C_i$ is $\eta M$-regular and, by our choice of $\eta\leq\zeta^{2k^2 \zeta^{-5}}=M^{-2}$, it will thus be $\zeta$-regular. Moreover, since $|C_i| \geq \eta|V_i|$, the $\eta$-regularity of $V_i$ implies that the red density of $C_i$ is at least $\frac 12 -\eta \geq \delta^{1/\binom k2}$. 
	These are the properties defining great configurations, so we see that $C_1,\ldots,C_k$ is a $(k,\zeta,\delta)$-great configuration, as desired.
\end{proof}

Thus, by assuming that $R$ contains many blue $K_k$, we conclude that it also contains a great configuration. By Lemma \ref{lem:goodconfigsuffices} and our choice of $\delta=2^{-2k-3}\varepsilon$ and $\zeta=\delta^3/k^2$, this $(k,\zeta,\delta)$-great configuration then implies the existence of the required monochromatic copy of $B_n \up k$. In the next subsection, we will see how to find such a book under the opposite assumption that $R$ has fewer than $\alpha |R|^k$ blue $K_k$.

\subsection{Few blue cliques}\label{fewbluekks}

We now assume that the condition of Lemma \ref{goodconfig} is not met and show that we can still find a monochromatic book, though we can no longer guarantee the existence of a great configuration (for instance, if every edge is red).
Broadly speaking, the idea of the proof is to use the assumption that $R$ contains few blue $K_k$ to find either a monochromatic book or a large subset of $R$ with few blue $K_{k-1}$. Applying the same argument repeatedly (starting from a set with few $K_r$ and restricting to a large subset with few $K_{r-1}$), we will eventually find a large subset of $R$ with few blue $K_2$, that is, few blue edges. At that point, it is straightforward to show that this set must contain a large red book, which concludes the proof. 

A pair of vertex sets $(X,Y)$ is said to be \emph{lower-$(\lambda,\gamma)$-regular} if $d(X',Y') \geq \lambda$ holds for every $X' \subseteq X$, $Y' \subseteq Y$ with $|X'| \geq \gamma |X|$, $|Y'| \geq \gamma |Y|$. We begin by showing that given any two vertex sets, one of which is regular and fairly dense in red, we can find either a large red book or a large pair of lower-$(\lambda,\gamma)$-regular subsets in the blue graph.

\begin{lem}\label{lowerreg}
	Fix $0<\beta\leq \frac 12$, $0<\gamma <\frac 15$, and $0<\lambda<\frac 1{12k}$, and set 
	\[
		\beta'=\frac{\beta}{1-2k \lambda} \qquad \text{ and } \qquad \rho=\left( \frac \gamma 2 \right) ^{1-\frac{\ln (1-\beta')}{\gamma}}.
	\] 
	Fix $0<\eta<\rho/2^{2k^2}$. Suppose $A$ is a set that is $\eta$-regular with red density at least $\frac 13$ and $B$ is a disjoint set of vertices with $|B|\geq \gamma^{-3}$. Then either there is a red $K_k$ in $A$ with at least $\beta|B|$ red extensions in $B$ (i.e.,\ a red book $B_{\beta|B|} \up k$) or there are subsets $A' \subseteq A$, $B' \subseteq B$ such that $|A'| \geq \rho|A|$, $|B'| \geq (1-\gamma) (1-\beta')^{1+\gamma}|B|$, and $(A',B')$ is lower-$(\lambda,\gamma)$-regular in blue.
\end{lem}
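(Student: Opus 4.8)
\textbf{Proof proposal for Lemma \ref{lowerreg}.}

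The plan is to contrapose: assuming that $A$ contains no red $K_k$ with $\beta|B|$ red extensions in $B$, I will build the desired lower-regular pair $(A', B')$ by an iterative ``cleaning'' procedure that removes witnesses of blue sparsity one at a time. First I observe that the failure hypothesis is most usefully recorded on the level of individual red cliques: if $Q$ is any red $K_k$ in $A$, then $Q$ has fewer than $\beta|B|$ red common neighbours in $B$, hence (flipping colours) at least $(1-\beta)|B|$ vertices of $B$ send a blue edge to some vertex of $Q$ — but this needs to be upgraded to \emph{blue density} between vertex \emph{sets} rather than between a single clique and $B$. The natural way to do this: restrict $A$ to a large subset on which red cliques are abundant (using that $A$ is $\eta$-regular with red density $\ge \tfrac13$, so by the counting lemma $A$ contains $\gtrsim (1/3)^{\binom k2}|A|^k$ red $K_k$, and a positive-density subset still does), and then translate clique-extension statements into set-density statements via an averaging argument over a random red $K_k$, much as in Corollary \ref{cor:randomclique}.

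The core of the argument is the iteration. I maintain a pair $(A_i, B_i)$ with $A_i \subseteq A$, $B_i \subseteq B$, starting from $(A,B)$. At each step, if $(A_i,B_i)$ is already lower-$(\lambda,\gamma)$-regular in blue, I stop. Otherwise there are $A_i' \subseteq A_i$, $B_i' \subseteq B_i$ with $|A_i'| \ge \gamma|A_i|$, $|B_i'| \ge \gamma|B_i|$ and $d_B(A_i', B_i') < \lambda$. I want to argue that passing from $B_i$ to $B_i \setminus B_i'$ makes genuine progress. Here is the key point: if $d_B(A_i', B_i') < \lambda$, then $d_R(A_i', B_i') > 1 - \lambda$, so a \emph{random} red $K_k$ inside $A_i'$ (which exists in quantity, provided $A_i'$ is still a reasonably large, reasonably regular, reasonably red-dense subset — this is where the hereditary property of regularity with $\eta \ll \rho/2^{2k^2}$ and the red-density bookkeeping feed in) has at least $(1 - k\lambda)|B_i'|$ red neighbours in $B_i'$ by a union bound. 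If $|B_i'|$ were as large as $\beta'|B|$, this would already give a red book $B_{\beta|B|}\up k$, since $(1-k\lambda)\beta'|B| \ge \beta|B|$ by the definition $\beta' = \beta/(1-2k\lambda)$ (with room to spare, $1-k\lambda \ge 1-2k\lambda$). So we may assume $|B_i'| < \beta'|B|$; but $|B_i'| \ge \gamma|B_i|$, so $|B_i| < \beta'|B|/\gamma$... that is not quite the right bound. The correct accounting is multiplicative: each step removes a $\ge \gamma$ fraction of the \emph{current} $B_i$, so after $j$ steps $|B_j| \le (1-\gamma)^j |B|$; and the process must terminate before $|B_i|$ drops below the threshold where I can no longer find red cliques with the required extension count, which by the same computation is around $\beta'|B|$ — more precisely, it terminates after at most $j^* \approx -\ln(1-\beta')/\gamma$ steps, because $(1-\gamma)^{j^*} \approx (1-\beta')$. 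This is exactly the exponent appearing in the definitions of $\rho$ and of the bound $(1-\gamma)(1-\beta')^{1+\gamma}|B|$ on $|B'|$, confirming the bookkeeping.

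Once the iteration stops at step $j^* $, I have a lower-$(\lambda,\gamma)$-regular blue pair $(A_{j^*}, B_{j^*})$; I set $A' = A_{j^*}$, $B' = B_{j^*}$. The size bound $|B'| \ge (1-\gamma)^{j^*}|B| \ge (1-\gamma)(1-\beta')^{1+\gamma}|B|$ follows from estimating $(1-\gamma)^{j^*}$ using $j^* \le 1 + (-\ln(1-\beta'))/\gamma$ and the inequality $1-\gamma \ge (\gamma/2)^{?}$... actually the cleanest route is: $(1-\gamma)^{1/\gamma} \ge 1 - $ (something), giving $(1-\gamma)^{j^*} \ge (1-\gamma) \cdot (1-\gamma)^{(-\ln(1-\beta'))/\gamma} \ge (1-\gamma)(1-\beta')^{1+\gamma}$ via $(1-\gamma)^{1/\gamma} \ge (1-\beta')^{(1+\gamma)/(-\ln(1-\beta'))} \cdot$ ... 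I will carry out the routine convexity estimate relating $(1-\gamma)^{1/\gamma}$, $e^{-1}$, and the stated exponents; since $\gamma < 1/5$ there is plenty of slack. For the lower bound on $|A'|$: each step only \emph{shrinks} $A_i$ when I restrict to $A_i'$, but in fact I should \emph{not} shrink $A$ at all — I only need $A_i'$ to exist \emph{as a subset of the fixed set $A$} to certify blue sparsity on $B_i$; so I keep $A_i = A$ throughout and only shrink $B$. Wait — but then at termination I get lower-regularity of $(A, B_{j^*})$, so $A' = A$ and the bound $|A'| \ge \rho|A|$ is trivial. Re-examining: the role of $\rho$ and the factor $(\gamma/2)^{1 - \ln(1-\beta')/\gamma}$ suggests $A$ \emph{is} being shrunk, presumably because to find a red $K_k$ inside $A_i'$ with many extensions I need $A_i'$ itself to be somewhat regular, which forces me to pass to a regular \emph{subset} of $A$ once and for all at the start (of size $\ge \rho'|A|$ for a single application of Lemma \ref{lem:random-subgraph-reg} or the hereditary property), and then the iteration shrinks \emph{that}. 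I will set this up so that $A' \subseteq A$ is chosen at the outset to be $\eta'$-regular of size $\ge \rho|A|$ with red density $\ge 1/3 - o(1)$, kept fixed, and all shrinking happens on the $B$ side; then $|A'| \ge \rho|A|$ is immediate and $|B'|$ is as computed.

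\textbf{Main obstacle.} The delicate part is not any single estimate but the interplay between the three ``budgets'': the regularity parameter must stay good enough ($\le \zeta$-type bounds) that the counting lemma still produces red $K_k$ in the shrinking set; the red-density of that set must stay above the threshold needed for the same; and the number of iterations (hence the total shrinkage of $B$) must be bounded by exactly $-\ln(1-\beta')/\gamma$ so that the final size bounds match the stated $\rho$ and $(1-\gamma)(1-\beta')^{1+\gamma}$. Making these three constraints simultaneously consistent — in particular choosing whether $A$ is fixed or shrinking, and locating the clean ``red book or shrink'' dichotomy at the right place — is where the care is needed; the hypothesis $\eta < \rho/2^{2k^2}$ is clearly calibrated precisely so that the hereditary property leaves enough regularity after the worst-case shrinkage of $A$ to a $\rho$-fraction, which tells me the intended bookkeeping has $A$ shrink by a factor $\rho$ and then the counting lemma is applied once at the end (or at each step with the hereditary bound $\eta/\rho < 2^{-2k^2}$).
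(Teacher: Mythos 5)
There is a genuine gap, and it lies exactly at the point where you hesitate: whether $A$ shrinks. Your final plan (fix $A'\subseteq A$ once at the start, shrink only the $B$-side, and hope the dichotomy ``either the sparsity witness is huge or remove it'' produces the book) cannot work. At a single step the witness $B_i'$ is only guaranteed to have size $\gamma|B_i|$, and $\gamma$ is tiny compared to $\beta'$, so the branch ``$|B_i'|\ge\beta'|B|$ gives a red book'' essentially never fires; and nothing in your setup forces the iteration to stop after $j^*\approx -\ln(1-\beta')/\gamma$ steps --- you assert termination there but supply no mechanism. The paper's mechanism is that the pages of the book come from the \emph{cumulative removed set} $\ol{B_{\ell}}=B\setminus B_\ell$, not from the current witness: at each step one restricts the $A$-side to those vertices of the low-density witness $X$ whose blue degree into the removed piece $Y'$ is at most $2\lambda|Y'|$ (an averaging step that keeps at least half of $X$, hence a $\gamma/2$-fraction of the current $A_\ell$). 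This intersection is what maintains the invariant that \emph{every} vertex of $A_\ell$ has blue degree at most $2\lambda|\ol{B_\ell}|$ into all of $\ol{B_\ell}$. Then, if the process survives to step $\ell^*=\lceil -\ln(1-\beta')/\gamma\rceil$, the removed set satisfies $|\ol{B_{\ell^*}}|\ge\beta'|B|$, the surviving $A_{\ell^*}$ still has size $\ge(\gamma/2)^{\ell^*}|A|\ge\rho|A|$, so by heredity it is $(\eta/\rho)$-regular with red density $\ge\frac13-\eta$ and the counting lemma gives a red $K_k$ inside it; that clique has at least $(1-2k\lambda)|\ol{B_{\ell^*}}|\ge\beta|B|$ red extensions \emph{in the removed set}, which is the red book. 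With $A$ held fixed, the removed pieces are sparse to different subsets of $A$ at different steps, and no single red $K_k$ in $A$ is guaranteed to have few blue edges into the union of removed pieces, so the book branch disappears.

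Two further consequences of this misreading: your interpretation of $\rho$ as a one-time regularity-restriction factor is wrong --- $\rho=(\gamma/2)^{1-\ln(1-\beta')/\gamma}$ is precisely the worst-case cumulative shrinkage of the $A$-side over the at most $\ell^*$ iterations, and $\eta<\rho/2^{2k^2}$ is calibrated so that after that shrinkage the hereditary property leaves enough regularity ($\eta/\rho<2^{-2k^2}$) for one application of the counting lemma at the end. Also, your per-step claim that a random red $K_k$ in $A_i'$ has $(1-k\lambda)|B_i'|$ red common neighbours ``by a union bound'' does not follow from the set-density bound $d_R(A_i',B_i')>1-\lambda$ alone; you need the vertex-degree refinement (keep only vertices of the witness with blue degree $<2\lambda$ into $Y'$), which is exactly the averaging the paper performs and which is what makes the $A$-side shrink by the factor $\gamma/2$ rather than $\gamma$.
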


\begin{proof}
	We will iteratively build two sequences of vertex sets $A=A_0 \supseteq A_1\supseteq \dotsb$ and $B=B_0\supseteq B_1\supseteq \dotsb$ with the following properties:
	\begin{enumerate}
	\renewcommand\theenumi{\roman{enumi}}
	\renewcommand\labelenumi{(\theenumi)}
		\item \label{property:asize} $|A_\ell| \geq (\gamma/2)^\ell |A|$,
		\item \label{property:bsize} $(1-\gamma)^{\ell}|B|-\ell \leq |B_\ell| \leq (1- \gamma)^\ell |B|$, and
		\item \label{property:degree} Setting $\ol{B_\ell}=B \setminus B_\ell$, every vertex in $A_\ell$ has blue degree at most $2 \lambda |\ol{B_\ell}|$ into $\ol{B_\ell}$.
	\end{enumerate}
	In each step of the process, either $(A_\ell,B_\ell)$ will be lower-$(\lambda,\gamma)$-regular in blue (in which case we take $A'=A_\ell$, $B'=B_\ell$) or else we will be able to continue the sequence. If we continue for sufficiently long, then the outcome will yield the desired large red book.

	To begin, set $A_0=A, B_0=B$, noting that the three properties we are tracking hold vacuously, since $|A_0|=|A|$, $|B_0|=|B|$, and $\ol{B_0}=\varnothing$. Suppose now that we have defined $A_\ell$ and $B_\ell$ satisfying properties (\ref{property:asize})--(\ref{property:degree}). If $(A_\ell,B_\ell)$ is lower-$(\lambda,\gamma)$-regular in blue, then we output $(A_\ell,B_\ell)$ as our desired pair $(A',B')$. If not, we may find $X \subseteq A_\ell$, $Y \subseteq B_\ell$ such that $|X| \geq \gamma |A_\ell|$, $|Y| \geq \gamma |B_\ell|$, and $d_B(X,Y)<\lambda$. If $Z$ is a uniformly random subset of $Y$ of cardinality exactly $\lceil \gamma |B_\ell|\rceil$, then $\E[d_B(X,Z)]=d_B(X,Y)$, which implies that there is some subset $Y' \subseteq Y$ with $|Y'|=\lceil \gamma |B_\ell|\rceil$ and $d_B(X,Y') \leq d_B(X,Y)<\lambda$. Fix such a $Y'$. 

	Let $X_1 \subseteq X$ be the set of all $x \in X$ with $e_B(x,Y')<2 \lambda |Y'|$ and $X_2=X \setminus X_1$. Then 
	\begin{align*}
		\lambda |X||Y'| >d_B(X,Y') |X||Y'|&=\sum_{x \in X} e_B(x,Y')\geq \sum_{x \in X_2} e_B(x,Y') \geq 2 \lambda |X_2||Y'|,
	\end{align*}
	which implies that $|X_2| < \frac 12 |X|$ and thus that $|X_1| \geq \frac 12 |X|$. We set $A_{\ell+1}=X_1$ and $B_{\ell+1}=B_\ell \setminus Y'$.

	We need to check that properties (\ref{property:asize})--(\ref{property:degree}) still hold for $(A_{\ell+1},B_{\ell+1})$. Property (\ref{property:asize}) is rather straightforward, since
	\[
		|A_{\ell+1}|=|X_1| \geq \frac 12 |X| \geq \frac \gamma 2 |A_{\ell}| \geq \frac \gamma 2 \left( \frac \gamma 2 \right) ^\ell |A|=\left( \frac \gamma 2 \right) ^{\ell+1}|A|,
	\]
	where we used our assumption that property (\ref{property:asize}) holds for $A_\ell$. Similarly, 
	\begin{align*}
		|B_{\ell+1}|&=|B_\ell|-|Y'| = |B_\ell|-\lceil \gamma |B_\ell| \rceil \leq |B_\ell|-\gamma |B_\ell|=(1-\gamma)|B_\ell| \leq (1- \gamma)^{\ell+1} |B|
	\end{align*}
	and
	\begin{align*}
		|B_{\ell+1}|& \geq |B_\ell| - \gamma |B_\ell|-1\geq (1-\gamma)\left( (1-\gamma)^\ell|B|-\ell \right) -1\geq (1-\gamma)^{\ell+1}|B|-(\ell+1),
	\end{align*}
	by applying property (\ref{property:bsize}) for $B_\ell$. Finally, if we let $\ol{B_{\ell+1}}=B \setminus B_{\ell+1}$, then we see that
	\[
		\ol{B_{\ell+1}}=B \setminus (B_\ell \setminus Y')=\ol{B_\ell} \sqcup Y'.
	\]
	By applying property (\ref{property:degree}) to $(A_\ell,B_\ell)$, we know that every vertex in $A_\ell$ has blue density at most $2 \lambda$ into $\ol{B_\ell}$. Since $A_{\ell+1} \subseteq A_\ell$, the same holds immediately for all vertices in $A_{\ell+1}$. Additionally, by our choice of $A_{\ell+1}=X_1$, we know that every vertex in $A_{\ell+1}$ has blue density less than $2 \lambda$ into $Y'$. By adding these two facts, we see that $d_B(x,\ol{B_{\ell+1}})<2 \lambda$ for all $x \in A_{\ell+1}$, proving property (\ref{property:degree}). This proves that we can indeed continue the sequence of pairs $(A_\ell,B_\ell)$.

	Now suppose this process continues until step $\ell^*=\lceil- \frac{\ln (1-\beta')}{\gamma}\rceil$. Then 
	\[
		|A_{\ell^*}| \geq \left( \frac \gamma2 \right) ^{\ell^*} |A| \geq \left( \frac \gamma 2 \right) ^{1-\frac{\ln (1-\beta')}{\gamma}}|A|=\rho |A|.
	\]
	Thus, since $A$ was $\eta$-regular and had red density at least $\frac 13$, we see that $A_{\ell^*}$ has red density at least $\frac13- \eta \geq \frac 14$ and is $(\eta/\rho)$-regular. Therefore, by the counting lemma, Lemma \ref{countinglemma}, we see that $A_{\ell^*}$ contains at least
	\[
	    \left(4^{-\binom k2}-\frac \eta \rho \binom k2\right)|A_{\ell^*}|^k \geq \left(2^{-2\binom k2}-2^{-2k^2}\binom k2\right)|A_{\ell^*}|^k >0
	\]
	red $K_k$, since $\eta<\rho/2^{2k^2}$. Thus, $A_{\ell^*}$ contains at least one red $K_k$.

	Fix a red $K_k$ inside $A_{\ell^*}$. Since every vertex in this clique has blue degree at most $2 \lambda |\ol{B_{\ell^*}}|$ into $\ol{B_{\ell^*}}$, they have at least $(1-2k \lambda)|\ol{B_{\ell^*}}|$ common red neighbors inside $\ol{B_{\ell^*}}$. Moreover, by our choice of $\ell^*$ and  property (\ref{property:bsize}), we see that
	\begin{align*}
		|B_{\ell^*}| \leq (1- \gamma)^{\ell^*} |B| \leq e^{-\gamma \ell^*}|B| \leq (1-\beta') |B|,
	\end{align*}
	which implies that the number of red extensions of our fixed clique is at least
	\[
		(1- 2k \lambda) |\ol{B_{\ell^*}}|=(1-2 k \lambda)(|B|-|B_{\ell^*}|)\geq (1-2k \lambda)\beta'|B|=\beta|B|.
	\]
	This gives us our monochromatic red book $B_{\beta|B|}\up k$. 

	Therefore, we may assume that the process stops at some step $\ell\leq \ell^*-1$. Then, by the definition of the sequence, we know that $(A_\ell,B_\ell)$ is lower-$(\lambda,\gamma)$-regular in blue, so all that needs to be done is to check the lower bounds on $|A_\ell|$ and $|B_\ell|$. But, by properties (\ref{property:asize}) and (\ref{property:bsize}), we see that
	\begin{align*}
		|A_\ell|& \geq \left( \frac \gamma 2 \right) ^\ell|A| \geq \left( \frac \gamma 2 \right) ^{\ell^*}|A| \geq \rho|A|
	\end{align*}
	and
	\begin{align}
		|B_\ell|&\geq (1- \gamma)^\ell |B|-\ell \notag\\
		&\geq (1-\gamma)^{-\ln(1-\beta')/\gamma}|B|-\ell \label{eq:replace-ell}\\
		&\geq  e^{(\gamma+\gamma^2) \ln(1-\beta')/\gamma}|B|-\ell\label{eq:exp-lb}\\
		&=(1-\beta')^{1+\gamma}|B|-\ell,\notag
	\end{align}
	where we used the definition of $\ell^*$ in (\ref{eq:replace-ell}) and the inequality $1-x \geq e^{-x-x^2}$, valid for all $x \in [0,\frac 12]$, in (\ref{eq:exp-lb}). Next, we observe that since $\beta \leq \frac 12$ and $\lambda < \frac1{12k}$, we have that $1-\beta'> \frac 25$, which implies that $-\ln(1-\beta')<1$ and thus that $\ell<\frac 1\gamma$. Additionally, since $\gamma<\frac 15$, we have that $\gamma<(\frac 25)^{1+\gamma}<(1-\beta')^{1+\gamma}$. Therefore, since $|B| \geq \gamma^{-3}$, we have that
	\[
	    \ell < \frac 1 \gamma \leq \gamma^2 |B| <\gamma (1-\beta')^{1+\gamma} |B|,
	\]
	which implies that
	\[
	    |B_\ell| \geq (1-\beta')^{1+\gamma}|B|-\ell \geq (1-\gamma)(1-\beta')^{1+\gamma}|B|.
	\]
	This shows that $(A_\ell,B_\ell)$ satisfies the properties required of $(A',B')$ and concludes the proof.
\end{proof}

We will use this lemma in conjunction with the following result, 
which, though stated in a more general form, will tell us that if we have few blue $K_{r+1}$ in a large set, then it has a large subset containing few blue $K_{r}$.

\begin{lem}\label{extcount}
	Fix $0<\lambda\leq \frac 12$, $r \in \N$, and $0<\gamma<\lambda^r$. Suppose that $G=(A,B, E)$ is a bipartite graph such that the pair $(A,B)$ is lower-$(\lambda,\gamma)$-regular. Suppose also that $H=(B,F)$ is an $r$-uniform hypergraph  with vertex set $B$ and edge set $F$. Define an \emph{extension} to be a pair $(a,f) \in A \times F$ such that $a$ is adjacent in $G$ to every vertex of $f$. Then the number of extensions in $G$ is at least
	\[
		\lambda^r |A|\left(|F|- \frac{r\gamma}{\lambda^{r-1}} |B|^r\right).
	\]
\end{lem}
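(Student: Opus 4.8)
The plan is to show that all but a controlled number of edges of $H$ have at least $\lambda^r|A|$ extensions, and then sum over the ``good'' edges. Write $N_G(b)$ for the neighbourhood of $b\in B$ inside $G$, and for a set $T\subseteq B$ let $A_T=\bigcap_{b\in T}N_G(b)$ be the set of vertices of $A$ adjacent in $G$ to every vertex of $T$, so that the number of extensions is exactly $\sum_{f\in F}|A_f|$. Call an $r$-element set $T\subseteq B$ \emph{bad} if $|A_T|<\lambda^r|A|$ and \emph{good} otherwise. If we can show that $H$ has at most $r\gamma|B|^r$ bad edges, we are done: the number of extensions is at least $\lambda^r|A|$ times the number of good edges, hence at least $\lambda^r|A|(|F|-r\gamma|B|^r)$, which is in turn at least $\lambda^r|A|(|F|-\frac{r\gamma}{\lambda^{r-1}}|B|^r)$ since $0<\lambda<1$ forces $\lambda^{r-1}\le 1$.

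The key ingredient, which I would isolate first, is a consequence of lower-$(\lambda,\gamma)$-regularity: if $A'\subseteq A$ has $|A'|\ge\gamma|A|$, then fewer than $\gamma|B|$ vertices $b\in B$ have $|N_G(b)\cap A'|<\lambda|A'|$. Indeed, if the set $S$ of such low-degree vertices had $|S|\ge\gamma|B|$, then the pair $(A',S)$ would have density strictly below $\lambda$ (its density is the average over $b\in S$ of $|N_G(b)\cap A'|/|A'|$), contradicting lower-regularity. Using this, I would prove by induction on $j$ that, for every $0\le j\le r$, the number $b_j$ of bad $j$-sets — defined with threshold $\lambda^j|A|$, the empty set being good, so $b_0=0$ — satisfies $b_j\le j\gamma|B|^j$. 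For the inductive step, a bad $j$-set $T$ either contains a bad $(j-1)$-subset, of which there are at most $|B|\,b_{j-1}$, or else all of its $(j-1)$-subsets are good; in the latter case fix any $v\in T$ and put $S=T\setminus\{v\}$, so $|A_S|\ge\lambda^{j-1}|A|\ge\gamma|A|$ (using $\gamma<\lambda^r\le\lambda^{j-1}$), and since $|A_T|=|N_G(v)\cap A_S|<\lambda^j|A|\le\lambda|A_S|$, the vertex $v$ is one of the fewer than $\gamma|B|$ vertices of low degree into $A_S$. Summing over the at most $|B|^{j-1}$ good $(j-1)$-sets bounds the number of bad $j$-sets of this second type by $\gamma|B|^j$, giving the recursion $b_j\le |B|\,b_{j-1}+\gamma|B|^j$ and hence $b_j\le j\gamma|B|^j$. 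Taking $j=r$ bounds the number of bad edges by $r\gamma|B|^r$ and finishes the proof.

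I do not anticipate a genuine obstacle; the only points needing care are choosing the thresholds $\lambda^j$ so that ``$S$ is a good $(j-1)$-set'' translates exactly into ``$A_S$ is large enough to invoke lower-regularity,'' and verifying that a bad $j$-set all of whose $(j-1)$-subsets are good really does force a low-degree vertex. Note also that the bound $r\gamma|B|^r$ obtained this way is already a bit stronger than the claimed $\frac{r\gamma}{\lambda^{r-1}}|B|^r$, so there is slack to spare. One could equivalently run the entire argument on ordered $r$-tuples of distinct vertices of $B$, peeling off one coordinate at a time, which makes the recursion marginally more transparent at the cost of an irrelevant factor of $r!$.
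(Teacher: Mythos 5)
Your proposal is correct, and it takes a genuinely different (and slightly stronger) route than the paper. The paper proves the lemma by induction on the uniformity $r$: for each ``good'' vertex $v\in B$ (blue degree at least $\lambda|A|$ into $A$) it restricts to $A'=N(v)\cap A$, uses the hereditary property to see that $(A',B)$ is lower-$(\lambda,\gamma/\lambda)$-regular, applies the induction hypothesis to the link $H_v$, and then sums over ordered hyperedges, handling the fewer than $\gamma|B|$ bad vertices separately; the repeated degradation $\gamma\to\gamma/\lambda$ of the regularity parameter is exactly where the factor $\lambda^{-(r-1)}$ in the error term comes from. You instead never pass to a sub-pair with worse parameters: your only input from regularity is the observation that for any fixed $A'\subseteq A$ with $|A'|\ge\gamma|A|$, fewer than $\gamma|B|$ vertices of $B$ have degree below $\lambda|A'|$ into $A'$, and you run the induction on the size $j$ of subsets of $B$, counting the bad $j$-sets (common neighborhood smaller than $\lambda^j|A|$) via the recursion $b_j\le |B|\,b_{j-1}+\gamma|B|^j$, which is valid since $\gamma<\lambda^r\le\lambda^{j-1}$ guarantees $|A_S|\ge\gamma|A|$ whenever $S$ is a good $(j-1)$-set. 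This yields at most $r\gamma|B|^r$ bad $r$-sets and hence at least $\lambda^r|A|\left(|F|-r\gamma|B|^r\right)$ extensions, which is stronger than the stated bound because $\lambda^{r-1}\le 1$; it also sidesteps the ordered-tuple bookkeeping entirely. The trade-off is minor: the paper's formulation of the induction (a statement about extensions of an arbitrary hypergraph over a lower-regular pair) is what gets reused verbatim with the degraded parameters at each level, whereas your argument front-loads all the work into the bad-set count; both are sound, and yours is arguably cleaner for this particular application.
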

\begin{proof}
	The proof is by induction on $r$. The base case is when $r=1$, which means that $F$ is a $1$-uniform hypergraph on $B$, i.e.,\ simply a subset
	of $B$. If $|F|<\gamma |B|$, then the bound holds trivially, since $\lambda^r|A|(|F|-r \gamma|B|^r/\lambda^{r-1})$ is negative in this case. So suppose that $|F| \geq \gamma |B|$. Then we may apply lower-$(\lambda,\gamma)$-regularity to the pair $(A,F)$ to conclude that $d(A,F) \geq \lambda$. Since $r=1$, the number of extensions is the same as the number of edges between $A$ and $F$. But 
	\[
		e(A,F)=d(A,F)|A||F| \geq \lambda |A||F| \geq \lambda |A|(|F|-\gamma|B|),
	\]
	as desired.

	For the induction step, suppose the lemma is true for $r-1$. Call a vertex $v \in B$ \emph{good} if its degree to $A$ is at least $\lambda|A|$ and \emph{bad} otherwise. Then there are fewer than $\gamma|B|$ bad vertices, for otherwise they would form a set $B'$ of size at least $\gamma|B|$ with density less than $\lambda$ into $A$. For a good vertex $v \in B$, let $A' \subseteq A$ be its set of neighbors in $A$ and let $H_v$ be its \emph{link} in $H$. That is, $H_v=(B,F_v)$ is the $(r-1)$-uniform hypergraph with vertex set $B$ and hyperedges
	\[
		F_v=\left\{ f \in \binom{B}{r-1}: f \cup \{v\} \in F \right\}.
	\]
	By the hereditary property of lower regularity, we know that the pair $(A',B)$ is lower-$(\lambda,\gamma/\lambda)$-regular. Therefore, we may apply the induction hypothesis to the configuration $(A',B,H_v)$ to conclude that the number of extensions of $H_v$ in $A'$ is at least
	\[
		\lambda^{r-1} |A'|\left(|F_v|- \frac{(r-1)(\gamma/\lambda)}{\lambda^{r-2}}|B|^{r-1}\right)\geq\lambda^r |A|\left(|F_v|- \frac{(r-1)\gamma}{\lambda^{r-1}}|B|^{r-1}\right),
	\]
	since $|A'| \geq \lambda |A|$ by the goodness of $v$. Note that, by the definition of $A'$, every extension of $f \in F_v$ into $A'$ yields an extension of $f \cup \{v\} \in F$ into $A$.	Now, instead of counting extensions of hyperedges, it will be convenient to count extensions of \emph{ordered} hyperedges. In other words, every $f \in F$ will be counted $r!$ times, once for each ordering of its vertices. Then, by summing over the first vertex of the ordered hyperedges, we have that
	\begin{align*}
		\#(\text{ordered}\text{ extensions})&=\sum_{v \in B} \#(\text{ordered extensions of hyperedges starting with }v)\\
		&\geq \sum_{v \text{ good}} \#(\text{ordered extensions of hyperedges starting with }v)\\
		&\geq (r-1)!\lambda^r |A| \sum_{v \text{ good}}\left(  |F_v|-\frac{(r-1)\gamma}{\lambda^{r-1}} |B|^{r-1}\right)\\
		&\geq r!\lambda^r|A|\left(|F|-\frac{(r-1)\gamma}{\lambda^{r-1}}|B|^{r}\right)-(r-1)! \lambda^r|A| \sum_{v\text{ bad}} |F_v|\\
		&\geq r! \lambda^r|A|\left(|F|-\frac{(r-1)\gamma}{\lambda^{r-1}}|B|^r\right)-r! \lambda^r |A| (\gamma |B|^r)\\
		&\geq r!\lambda^r |A| \left(|F|-\frac{r \gamma}{\lambda^{r-1}} |B|^r\right),
	\end{align*}
	where we used the fact that since there are at most $\gamma|B|$ bad vertices, there are at most $\gamma |B|^r$ ordered $r$-tuples that start with a bad vertex. Dividing by $r!$ to count unordered extensions gives the desired result. 
\end{proof}

Using these two lemmas, we can tackle the case where $R$ does not contain many blue $K_k$. The main technical details will appear in the next lemma, but for the moment we give a high-level overview. First, since $R$ contains few blue $K_k$, one of its parts spans few blue $K_k$ with the rest of $R$. Call this block $A$ and set $B=R \setminus A$. Then, by Lemma \ref{lowerreg}, either we can find a large red book between $A$ and $B$ or we can restrict to large subsets $A'$ and $B'$ such that $(A', B')$ is lower regular in blue. In the latter case, Lemma \ref{extcount} implies that either there are many blue $K_k$ between $A'$ and $B'$, a possibility which is ruled out by our assumption that $A$ spans few blue $K_k$ with $B$, or else $B'$ must itself contain few blue $K_{k-1}$. 
We now repeat this argument $k-2$ times. At each step, we assume that we have few blue $K_r$ and either we find a large monochromatic book or else we reduce to a large subset with few blue $K_{r-1}$. If we never find a monochromatic book, then, at the end, we find a large subset of $R$ with few blue $K_2$, i.e.,\ a large subset that is close to monochromatic in red. If the parameters are chosen appropriately, we can then show that this large, very red set contains the requisite red $B_n \up k$. The inductive step for this argument is given by the following lemma. Recall that $R$ is the union of blocks $V_i$, each of which is $\eta$-regular and has red density at least $\frac 12$. 

\begin{lem}\label{lem:kr-induction}
	Let $3 \leq r \leq k$ be an integer and let $\beta=1/(k-1)$, $\tau=(1-\beta)^{k-2} \varepsilon/8^k$, $\lambda= 2^{-4k} /k$, $\beta'=\beta/(1-2k \lambda)$ and 
	\[
		\gamma=\min\left\{\lambda^{k^2},1-(1-\beta)^{1/2k}, \frac{(k-7/4)\ln(1-\beta)}{(k-2)\ln(1-\beta')}-1\right\}. 
	\]
	Suppose that $|R| \geq \gamma^{-4}$ and $S \up r \subseteq R$ is a set  that is the disjoint union $S\up r=\bigcup_i V_i \up r$ of blocks, where $V_i \up r \subseteq V_i$ and either $V_i \up r=\varnothing$ or else $|V_i \up r| \geq \tau |V_i|$. Suppose too that
	\[
		|S\up r| \geq \left( (1-\gamma)^{k-r}(1-\beta')^{(k-r)(1+\gamma)}-2(k-r)\tau \right)  |R|
	\]
	and $S\up r$ spans at most $\alpha_r |S\up r|^r$ blue $K_r$, where $\alpha_r =(k-r+1)\lambda^{kr}$. Then either $S\up r$ contains a monochromatic $B_n \up k$ or there is a subset $S \up {r-1} \subseteq S\up r$ that is the union of $V_i \up {r-1} \subseteq V_i \up r$ with either $V_i \up {r-1}=\varnothing$ or $|V_i \up {r-1}| \geq \tau |V_i|$ such that
	\[
		|S\up {r-1}| \geq \left( (1-\gamma)^{k-(r-1)}(1-\beta')^{(k-(r-1))(1+\gamma)}-2(k-(r-1))\tau \right)|R|
	\]
	and $S\up{r-1}$ spans at most $\alpha_{r-1}|S\up{r-1}|^{r-1}$ blue $K_{r-1}$, where $\alpha_{r-1}=(k-(r-1)+1)\lambda^{k(r-1)}$.
\end{lem}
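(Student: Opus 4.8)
The plan is to mimic the high-level outline described right before the lemma, carefully tracking the bookkeeping parameters so that the hypotheses at level $r$ feed into the hypotheses at level $r-1$. First I would use the assumption that $S\up r$ spans few blue $K_r$ to locate a single block $V_i\up r$ that spans few blue $K_r$ together with the rest of $S\up r$: since the number of blue $K_r$ with at least two vertices in a common block is negligible (bounded by $O(|S\up r|^r/m)$, using that $m = K(\eta)$ is enormous), an averaging argument over the nonempty blocks produces a block $A := V_i\up r$ such that $A$ together with $B := S\up r \setminus A$ spans at most roughly $2\alpha_r |B|^{r-1}|A|$ blue $K_r$ (i.e.\ blue $K_{r-1}$ in $B$ whose members are all blue-adjacent to a common vertex of $A$ — actually I want to phrase it as: few extensions of blue $K_{r-1}$'s of $B$ into $A$). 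Here I use that $A$ inherits $\eta$-regularity and red density $\geq \tfrac12$ from being (a large piece of) some $V_i$.

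Next I would invoke Lemma \ref{lowerreg} with the set $A$ (red-dense, $\eta$-regular) and the set $B$, using the values of $\beta,\lambda,\gamma$ from the statement. The output is a dichotomy: either $A$ contains a red $K_k$ with at least $\beta|B|$ red extensions into $B$, which already gives a red $B_{\beta|B|}\up k$ — and here I must check that $\beta|B| \geq n$, which should follow from the lower bound on $|S\up r|\geq |B|$, the fact that $|R|$ is close to $N = (2^k+\varepsilon)n$, and the definition of $\beta = 1/(k-1)$ (for $k\geq 3$, $\beta|B|$ comfortably exceeds $n$ once $|B|$ is a constant fraction of $N$) — or else we obtain $A'\subseteq A$, $B'\subseteq B$ with $|A'|\geq\rho|A|$, $|B'|\geq (1-\gamma)(1-\beta')^{1+\gamma}|B|$, and $(A',B')$ lower-$(\lambda,\gamma)$-regular in blue. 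In the second case I would apply Lemma \ref{extcount} with the bipartite graph $(A',B')$, the hypergraph $H$ on $B'$ whose edges are the blue $K_{r-1}$'s contained in $B'$, and parameter $r-1$: the number of extensions (pairs consisting of a vertex of $A'$ and a blue $K_{r-1}$ of $B'$ all of whose vertices are blue-joined to it) is at least $\lambda^{r-1}|A'|\big(|F| - \tfrac{(r-1)\gamma}{\lambda^{r-2}}|B'|^{r-1}\big)$. But every such extension is a blue $K_r$ spanning $A$ and $B$, so the number of extensions is at most the (small) number of blue $K_r$ between $A$ and $B$ from the previous paragraph. Rearranging, $|F| \leq \tfrac{(r-1)\gamma}{\lambda^{r-2}}|B'|^{r-1} + (\text{small})$, which, after checking the arithmetic against $\alpha_{r-1} = (k-r+2)\lambda^{k(r-1)}$, shows that $B'$ spans at most $\alpha_{r-1}|B'|^{r-1}$ blue $K_{r-1}$.

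It then remains to set $S\up{r-1} := B'$ and verify the two quantitative claims. For the size bound, $B' = S\up{r-1}$ arises from $S\up r$ by deleting one block $A$ (a $\leq \tau$-fraction, hence the "$-2\tau|R|$" type loss, accounting for both the removed block and the fact that $B'$ is only a $(1-\gamma)(1-\beta')^{1+\gamma}$-fraction of $B$) and then passing to the lower-regular subset; combining this with the inductive lower bound on $|S\up r|$ and the elementary inequality $(1-\gamma)(1-\beta')^{1+\gamma} \cdot x \geq (1-\gamma)(1-\beta')^{1+\gamma} x$ gives exactly the stated bound with $k-r$ replaced by $k-(r-1)$. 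I also need to re-express $B'$ as a disjoint union of pieces $V_i\up{r-1}\subseteq V_i\up r$ with each nonempty piece of size $\geq\tau|V_i|$: since $B'$ is a subset of $B = S\up r\setminus A$, its intersection with each $V_i\up r$ is either empty or, if too small, can simply be discarded — and the total discarded is at most... here one has to be slightly careful, but the cleanest route is to only keep $V_i\up{r-1} := B'\cap V_i\up r$ when this has size $\geq \tau|V_i|$, discarding otherwise; since there are at most $m$ blocks and $m$ is a constant (albeit large) while $\tau|V_i| = \tau N/m$, the total discarded is at most $m\cdot\tau N/m = \tau N$, which can be absorbed into the size bound. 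The main obstacle is this last bit of bookkeeping — making the "discard small pieces" step and the "$-2(k-r)\tau|R|$" slack match up precisely across the induction, and simultaneously ensuring that the choice of $\gamma$ (in particular the third term in its definition, $\tfrac{(k-7/4)\ln(1-\beta)}{(k-2)\ln(1-\beta')}-1$, which governs how much the $(1-\beta')^{1+\gamma}$ factors are allowed to compound over $k-2$ iterations) keeps $|S\up{r-1}|$ bounded below by a fixed constant fraction of $N$ all the way down to $r=2$, so that Lemma \ref{lowerreg}'s red-book alternative always yields a book on at least $n$ pages. The density/regularity hypotheses needed to invoke Lemmas \ref{lowerreg} and \ref{extcount} ($\eta$ small relative to $\rho$, $\gamma$ small relative to $\lambda^{r}$, $|R|\geq\gamma^{-4}$ hence $|B'|\geq\gamma^{-3}$) should all follow routinely from the global choices of $\delta,\zeta,\eta$ fixed at the start of the section.
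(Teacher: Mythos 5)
Your skeleton — averaging to pick a block $A=V_i^{(r)}$ incident to few blue $K_r$, applying Lemma \ref{lowerreg} to $(A,B)$ with $B=S^{(r)}\setminus A$, feeding the lower-regular pair $(A',B')$ into Lemma \ref{extcount} with the hypergraph of blue $K_{r-1}$ in $B'$, and then the block-discarding bookkeeping to produce $S^{(r-1)}$ — matches the paper's proof up to the decisive step, but your one-shot use of Lemma \ref{extcount} does not give the stated bound, and this is a genuine gap rather than "checking the arithmetic". The extension count you get is at least $\lambda^{r-1}|A'|\bigl(|F|-\tfrac{(r-1)\gamma}{\lambda^{r-2}}|B'|^{r-1}\bigr)$ with only $|A'|\geq\rho|A|$, where $\rho=(\gamma/2)^{1-\ln(1-\beta')/\gamma}$ is exponentially small in $\gamma^{-1}=\lambda^{-k^2}$. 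Comparing this with the averaging upper bound $r\alpha_r|A||S^{(r)}|^{r-1}\leq 2^{r-1}r\alpha_r|A||B|^{r-1}$ therefore only yields $|F|\lesssim\bigl(\tfrac{(r-1)\gamma}{\lambda^{r-2}}+\tfrac{C^{r}\alpha_r}{\rho\lambda^{r-1}}\bigr)|B'|^{r-1}$, and the second term, carrying the factor $1/\rho$, is enormous — far larger than $\alpha_{r-1}=(k-r+2)\lambda^{k(r-1)}$, indeed larger than $1$ — so it cannot be absorbed. Nor can you repair this by redefining the thresholds: if each step of the downward induction lost a factor $1/\rho$, the density bound would exceed $1$ after a single step and the $r=2$ endgame (a nearly red set) would become vacuous.

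What the paper does, and what your proposal is missing, is an inner iteration that exhausts $A$: if $B'$ is not sparse in blue $K_{r-1}$, record that the average vertex of $A'$ lies in at least $\mu|B|^{r-1}$ blue extensions, delete $A'$ from $A$, and apply Lemma \ref{lowerreg} again to $(A\setminus A',B)$, and so on. Each round either exits with a red book $B^{(k)}_{\beta|B|}$ or with a sparse $B'$ (in which case your bookkeeping, with the factor-of-$2$ slack in the threshold $\tfrac12\alpha_{r-1}$ to cover the discarded small pieces, finishes the job), or it removes at least a $\rho$-fraction of what remains of $A$; so if no exit ever occurs, the removed sets eventually cover half of $A$. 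Only then does one compare the accumulated count, at least $\tfrac12\mu|A||B|^{r-1}$, with the averaging bound, obtaining $2^{r-1}r\alpha_r\geq\tfrac12\mu$; because a constant fraction of $A$ is now involved, this relation loses only factors $\lambda^{-O(k)}$ rather than $1/\rho$, which is exactly what makes the choices $\alpha_r=(k-r+1)\lambda^{kr}$, $\alpha_{r-1}=(k-r+2)\lambda^{k(r-1)}$ sustainable all the way down to $r=2$. Your remaining points — the size bookkeeping with the $2(k-r)\tau$ slack, discarding pieces $B'\cap V_i^{(r)}$ smaller than $\tau|V_i|$, and the role of the third term in the definition of $\gamma$ in guaranteeing $\beta|B|\geq n$ — are in the right spirit, but without the exhaustion of $A$ the argument does not close.
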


\begin{rem}
	The definition of $\gamma$ is rather complicated, but one can check that for all $k \geq 3$ the first term in the minimum is the smallest, i.e.,\ $\gamma=\lambda^{k^2}$. However, it will be convenient for the proof to define it as above. 
\end{rem}

\begin{proof}[Proof of Lemma \ref{lem:kr-induction}]
	By assumption, $S\up r$ spans at most $\alpha_r |S\up r|^r$ blue $K_r$. 
	Therefore, by averaging, there must be some $i$ for which there are at most $r \alpha_r |V_i \up r||S\up r|^{r-1}$ blue $K_r$ with a vertex in $V_i \up r$.	Fix such an $i$ and let $A=V_i \up r$ and $B=S \up r \setminus V_i \up r$. 
	
	Since $A=V_i \up r \subseteq V_i$ with $|V_i \up r| \geq \tau |V_i|$, we find that $A$ is $(\eta/\tau)$-regular and, since $\tau > \eta$, it has red density at least $\frac 12 - \eta \geq \frac 13$. We apply Lemma \ref{lowerreg} with parameters $\beta, \gamma,$ and $\lambda$ as above to the pair $(A,B)$, which we may do since the assumption that $|R| \geq \gamma^{-4}$ implies that $|B| \geq \gamma^{-3}$. This tells us that we can either find a red book $B\up k_{\beta |B|}$ or subsets $A' \subseteq A$, $B' \subseteq B$ such that $(A', B')$ is lower-$(\lambda,\gamma)$-regular in blue with $|A'| \geq \rho |A|$ and $|B'| \geq (1-\gamma)(1- \beta')^{1+\gamma}|B|$, where
	\[
		\beta'=\frac{\beta}{1-2k \lambda} \qquad \text{ and } \qquad \rho=\left( \frac \gamma 2 \right) ^{1-\frac{\ln (1-\beta')}{\gamma}}.
	\]
	First, suppose that we have found a red book $B\up k_{\beta |B|}$. We know that 
	\begin{align}
		|B|&=|S\up r|-|V_i \up r| \notag\\
		&\geq (1-\gamma)^{k-r}(1- \beta')^{(k-r)(1+\gamma)} |R|-(2(k-r)\tau|R|+|V_i|) \notag\\
		&\geq (1-\gamma)^k (1-\beta')^{(1+\gamma)(k-3)}|R|-2k\tau N\label{eq:plug-in-r}\\
		&\geq (1-\beta)^{1/2}(1-\beta)^{k-5/2}|R|-2k\tau N\label{eq:plug-in-gamma}\\
		&=(1-\beta)^{k-2}|R|-2k\tau N \notag.
	\end{align}
	In (\ref{eq:plug-in-r}), we used that $r \geq 3$ and $|V_i|=N/K(\eta) \leq \tau N$ by the choice of $\tau$. In (\ref{eq:plug-in-gamma}), we used the definition of $\gamma$, which implies that $(1-\gamma)^k \geq (1-\beta)^{1/2}$ and that $(1-\beta')^{(1+\gamma)(k-3)}\geq (1-\beta)^{k-5/2}$, since $1+\gamma \leq \frac{(k-7/4)\ln(1-\beta)}{(k-2)\ln(1-\beta')} \leq \frac{(k-5/2)\ln(1-\beta)}{(k-3)\ln(1-\beta')}$.
	Now, we plug in $|R| \geq N/2$ and our definition of $\tau$ to find that
	\begin{align*}
		|B|&\geq (1-\beta)^{k-2} \frac N2 - 2k(1-\beta)^{k-2} \frac{\varepsilon}{8^k} N\\
		&= (1-\beta)^{k-2} n \left( \left(2^{k-1}+\frac \varepsilon2\right)-2k\varepsilon \frac{2^k+\varepsilon}{8^k} \right) \\
		&\geq (1-\beta)^{k-2} 2^{k-1} n.
	\end{align*}
	Therefore, the number of pages in our red book $B\up k_{\beta |B|}$ satisfies
	\[
		\beta|B| \geq \beta (1- \beta)^{k-2} 2^{k-1}n \geq n,
	\]
	since, for all $k \geq 3$ and $\beta=1/(k-1)$, we have that
	\begin{equation}
		\frac 1{k-1} \left( 1- \frac 1{k-1} \right) ^{k-2} \geq \frac{1}{2^{k-1}}.\label{eq:beta-maximizer}
	\end{equation}
	Thus, in this case, we have found a red $B_n \up k$.

	Therefore, we may suppose that we instead find the subsets $A'$ and $B'$ described earlier. If $B'$ spans fewer than $\frac 12 \alpha_{r-1} |B'|^{r-1}$ blue $K_{r-1}$, then we are done. To see this, we delete from $B'$ all the vertices in blocks $V_i$ that maintain at most a $\tau$ fraction of their vertices and set $S\up{r-1}$ to be the remainder. Doing so discards at most $\tau|R|$ vertices from $B'$, so we find that
	\begin{align*}
	    |S\up{r-1}|&\geq |B'|-\tau |R| \\
	    &\geq (1-\gamma)(1-\beta')^{1+\gamma}\left(|S\up r|-|V_i\up r|\right)-\tau |R|\\
	    &\geq (1-\gamma)(1-\beta')^{1+\gamma}|S\up r|-2\tau |R|\\
	    &\geq (1-\gamma)(1-\beta')^{1+\gamma}\left((1-\gamma)^{k-r}(1-\beta')^{(k-r)(1+\gamma)}-2(k-r)\tau\right)|R|-2\tau |R|\\
	    &\geq \left((1-\gamma)^{k-(r-1)}(1-\beta')^{(k-(r-1))(1+\gamma)}-2(k-(r-1))\tau\right)|R|,
	\end{align*}
	as desired. Additionally, since we discarded at most $\tau |R|$ vertices from $B'$, we discarded at most a $(\tau |R|/|B'|)$-fraction of the vertices in $B'$. By the same computation as in equations (\ref{eq:plug-in-r}) and (\ref{eq:plug-in-gamma}), we see that $|B'| \geq (1-\beta)^{k-2}|R|-2k \tau N \geq \frac 12(1-\beta)^{k-2}|R|$.
	Therefore, 
	$\tau|R|/|B'| \leq 2\tau(1-\beta)^{2-k}=2\varepsilon/8^k<2^{-k}$, so discarding this small fraction of vertices means that $S\up{r-1}$ will still span fewer than $\alpha_{r-1}|S\up{r-1}|^{r-1}$ blue $K_{r-1}$, as needed. Therefore, in this case, we are done.

	So we may assume that $B'$ spans at least $\frac 12 \alpha_{r-1}|B'|^{r-1}$ blue $K_r$. Now we apply Lemma \ref{extcount} to the blue graph between the pair $(A',B')$, which is lower-$(\lambda,\gamma)$-regular by construction. The hypergraph on $B'$ that we will use is the hypergraph $F$ of all blue $K_{r-1}$ in $B'$, i.e.,\ an $(r-1)$-tuple in $B'$ will be a hyperedge if and only if it spans a blue clique. Note that $|F| \geq \frac 12 \alpha_{r-1}|B'|^{r-1}$, since we assumed that $B'$ spans at least that many blue $K_{r-1}$. Then the number of extensions is precisely the number of blue $K_r$ with one vertex in $A'$ and the rest in $B'$. Lemma \ref{extcount} says that this number of extensions is at least
	\begin{align*}
		\lambda^{r-1} |A'| \left( |F|-\frac{(r-1)\gamma}{\lambda^{r-2}}|B'|^{r-1} \right) &\geq \lambda^{r-1} |A'| \left( \frac 12 \alpha_{r-1}-\frac{(r-1)\gamma}{\lambda^{r-2}} \right) |B'|^{r-1} \\
		&\geq \left(\lambda (1-\gamma)(1- \beta')^{1+\gamma}\right)^{r-1} \left(\frac 12 \alpha_{r-1}-\frac{(r-1)\gamma}{\lambda^{r-2}} \right) |A'| |B|^{r-1}\\
		&=:\mu |A'||B|^{r-1}.
	\end{align*}
	In other words, an average vertex in $A'$ is contained in at least $\mu |B|^{r-1}$ blue extensions of a blue $K_{r-1}$ in $B$. Now we delete $A'$ from $A$ and apply this argument again. Formally, set $A_1=A \setminus A'$. Then we apply Lemma \ref{lowerreg} to the pair $(A_1,B)$, which tells us that we either find a red book $B\up k_{\beta|B|}$ or subsets $A_1' \subseteq A_1$, $B' \subseteq B$ such that $(A_1', B')$ is lower-$(\lambda,\gamma)$-regular in blue with $|A'_1| \geq \rho|A_1|$ and $|B'| \geq (1-\gamma) (1- \beta')^{1+\gamma}|B|$. As above, if we find the monochromatic $B\up k_{\beta |B|}$, we are done, since $\beta|B| \geq n$. If not, then either this $B'$ has a density of blue $K_{r-1}$ smaller than $\frac 12 \alpha_{r-1}$, in which case we are again done, or else an average vertex in $A_1'$ is contained in at least $\mu |B|^{r-1}$ blue extensions of a blue $K_{r-1}$ in $B$. In that case, we set $A_2=A_1 \setminus A_1'$ and repeat the process once more. Each time we repeat, either we get the desired conclusion or we can pull out a new subset $A_i' \subseteq A$ with $|A_i'| \geq \rho |A \setminus (A' \cup A_1' \cup \dotsb \cup A_{i-1}')|$ and such that the average vertex in $A_i'$ is contained in at least $\mu |B|^{r-1}$ blue extensions of a blue $K_{r-1}$ in $B$. Since we pull out at least a $\rho$-fraction of the remainder of $A$ at each step, we will eventually pull out at least half the vertices in $A$.
	
	The set $\wt A \subseteq A$ of removed vertices has the property that the average vertex in $\wt A$ is contained in at least $\mu |B|^{r-1}$ blue extensions of a blue $K_{r-1}$ in $B$. Therefore, the total number of extensions between $A$ and $B$ is at least $\frac 12 \mu |A||B|^{r-1}$. However, by construction, this number is also at most $r \alpha_r |A| |S\up r|^{r-1} \leq 2^{r-1} r \alpha_r |A| |B|^{r-1}$, using the fact that $|B| \geq \frac 12 |S\up r|$. So we conclude that 
	\begin{align*}
		2^{r-1}r \alpha_r &\geq \frac 12 \mu=\frac 12\left(\lambda (1-\gamma)(1- \beta')^{1+\gamma}\right)^{r-1} \left(\frac 12 \alpha_{r-1}-\frac{(r-1)\gamma}{\lambda^{r-2}} \right).
	\end{align*}
	Rearranging, this implies that 
	\begin{align}
		\alpha_{r-1} &\leq \frac{2^{r+1} r \alpha_r}{(\lambda(1-\gamma)(1- \beta')^{1+\gamma})^{r-1}}+\frac{2(r-1)\gamma}{\lambda^{r-2}} \notag\\
		&\leq \frac{k2^{4k}}{\lambda^{r-1}}\alpha_r+\frac{2k\gamma}{\lambda^{r-2}}\label{eq:plug-in-rbeta}\\
		&\leq \lambda^{-r} \alpha_r+\lambda^{k^2-r} \label{eq:plug-in-lambda}\\
		&\leq \lambda^{-k} ((k-r+1) \lambda^{kr})+\lambda^{kr-k} \label{eq:inductive-alpha}\\
		&=(k-(r-1)+1) \lambda^{k(r-1)} \notag,
	\end{align}
	where in (\ref{eq:plug-in-rbeta}) we used that $r \leq k$ and $(1-\gamma)(1-\beta')^{1+\gamma} \geq \frac 18$, in (\ref{eq:plug-in-lambda}) we used that $1/\lambda\geq k2^{4k}$ and  $\gamma\leq\lambda^{k^2}<\lambda^{k^2-2}/2k$, and in (\ref{eq:inductive-alpha}) we used our assumption that $\alpha_r \leq (k-r+1)\lambda^{kr}$ and that $k^2-r \geq kr-k$ since $r \leq k$. This is our desired bound. 
\end{proof}

We can now put all the pieces together and finish the proof of Theorem \ref{thm:tripleexp}.

\begin{proof}[Proof of Theorem \ref{thm:tripleexp}]
	Recall that we are given a $2$-coloring of $K_N$, where $N=(2^k+\varepsilon)n$. We set $\delta=2^{-2k-3} \varepsilon$, $\zeta=\delta^3/k^2$, $\eta=\zeta^{2k^2 \zeta^{-5}}$, $\beta=1/(k-1)$, $\tau=(1-\beta)^{k-2}\varepsilon/8^k$, $\lambda= 2^{-4k}/k$,
	$\beta'=\beta/(1-2k \lambda)$, $\alpha=\lambda^{k^2}$ and 
	\[
		\gamma=\min\left\{\lambda^{k^2},1-(1-\beta)^{1/2k},\frac{(k-7/4)\ln(1-\beta)}{(k-2)\ln(1-\beta')}-1\right\}. 
	\]
	We apply Lemma \ref{lem:partition} to our coloring, with $\eta$ as the parameter, which we can do as long as $N \geq 2^{1/\eta^{(10/\eta)^{15}}}$. We assume without loss of generality that at least half the parts in the partition have internal red density at least $\frac 12$ and set $R$ to be the union of these parts. If $R$ spans at least $\alpha|R|^k$ blue $K_k$, then we apply Lemma \ref{goodconfig}. To do so, we need to check that $\alpha \geq 2 \delta+\zeta k^2$, which indeed holds since
	\[
		2 \delta+\zeta k^2=2^{-2k-2}\varepsilon+ 2^{-6k-9}\varepsilon^3 \leq \varepsilon \leq \frac{2^{-4k^3}}{k^{k^2}}=\lambda^{k^2}=\alpha,
	\]
	by our choice of $\varepsilon \leq 2^{-4k^3}/k^{k^2}$ (recall that we are free to make such a choice since we are ultimately interested in small $\varepsilon$). To apply Lemma \ref{goodconfig}, we also need to check that $\eta \leq \min\{\alpha,\zeta^{2k^2 \zeta^{-5}},\frac 14\}$, which certainly holds since $\eta \leq \delta \leq \alpha \leq \frac 14$ by the above. Similarly, $\delta<2^{-2k^2}$ holds since $\delta<\varepsilon<2^{-4k^3}$.
	Thus, Lemma \ref{goodconfig} applies and we may find a $(k,\zeta,\delta)$-great configuration within $R$. We then apply Lemma \ref{lem:goodconfigsuffices} to this $(k,\zeta,\delta)$-great configuration to find the desired monochromatic $B_n \up k$.

	Therefore, we may suppose that $R$ contains fewer than $\alpha |R|^k$ blue $K_k$. We set $S\up k=R$, $\alpha_k=\alpha$, and apply Lemma \ref{lem:kr-induction}. To do so, we need to assume that $|R| \geq \gamma^{-4}$. We then repeatedly apply Lemma \ref{lem:kr-induction} and, at each step of this induction, either we will find a monochromatic $B_n \up k$ or we will be able to continue on to the next step. This process ends when $r=2$, at which point we have found a set $S\up 2$ with fewer than $\alpha_2 |S \up 2|^2$ blue edges, where $\alpha_2\leq k \lambda^{2k} \leq k \lambda^4$, and with
	\begin{align*}
	    |S\up 2| &\geq \left((1-\gamma)^{k-2}(1-\beta')^{(k-2)(1+\gamma)}-2(k-2)\tau\right)|R|\\
	    &\geq \left((1-\beta)^{1/2} (1-\beta)^{k-7/4} - 2k \tau\right)|R|\\
	    &=\left((1-\beta)^{k-5/4} -2k\tau\right)|R|,
	\end{align*}
	using our definition of $\gamma$ as in (\ref{eq:plug-in-gamma}).
	At this point, it is very easy to find a red $B_n \up k$, since $S \up 2$ is almost a red clique. Concretely, first observe that each blue edge is in $\binom{|S\up 2|-2}{k-1}$ tuples of $k+1$ vertices of $S \up 2$. This means that the number of red $K_{k+1}$ in $S\up 2$ is at least
	\[
		\binom{|S\up 2|}{k+1}- \alpha_2 |S\up 2|^2 \binom{|S\up 2|-2}{k-1} \geq \left( 1-2k(k+1)\alpha_2 \right) \binom{|S\up 2|}{k+1}.
	\]
	This implies that a random $k$-tuple of vertices in $S\up k$ is in at least $(1-3k(k+1)\alpha_2) |S\up 2|$ red $K_{k+1}$, on average. Moreover,
	\begin{align*}
		(1-3k(k+1)\alpha_2)|S \up 2| &\geq (1-3k(k+1)k \lambda^4)((1- \beta)^{k-5/4}-2k\tau) |R|\\
		&\geq (1-2^{-10 k})2^{9/8-k} \left( 2^{k-1}+ \frac \varepsilon 2 \right) n\\
		&\geq n,
	\end{align*}
	using, similarly to (\ref{eq:beta-maximizer}), that, by our choice of $\beta=1/(k-1)$, we have that $(1-\beta)^{k-5/4}-2k\tau \geq 2^{9/8-k}$. Therefore, if the process is allowed to continue until $r=2$, then we again find our desired monochromatic book. 

	In this proof, our two lower-bound assumptions on $N$ were that $N \geq 2^{1/\eta^{(10/\eta)^{15}}}$, in order to apply Lemma \ref{lem:partition},
	and that $|R| \geq \gamma^{-4}$, in order to apply Lemma \ref{lem:kr-induction}. The latter is a much weaker condition, since $|R| \geq N/2$ and
	\[
		\gamma^{-4}=\lambda^{-4k^2}=k^{4k^2} 2^{16k^3} \leq \varepsilon^{-4} \ll  2^{1/\eta^{(10/\eta)^{15}}}.
	\]
	Therefore, the lower bound the proof gives is $N \geq 2^{1/\eta^{(10/\eta)^{15}}}$. By our choice of $\varepsilon\leq 2^{-4k^3}/k^{k^2}$, we have that $\zeta =2^{-6k-9} \varepsilon^3/k^2 \geq \varepsilon^4$ and
	\[
		\eta=\zeta^{2k^2 \zeta^{-5}}\geq (\varepsilon^{4})^{2k^2 \varepsilon^{-20}}=\varepsilon^{8k^2 \varepsilon^{-20}}\geq 2^{-\varepsilon^{-22}}.
	\]
	Therefore,
	\[
		2^{1/\eta^{(10/\eta)^{15}}} \leq 2^{2^{2^{\varepsilon^{-25}}}},
	\]
	so our proof goes through when $n \geq 2^{2^{2^{\varepsilon^{-25}}}}$. That is, if $\varepsilon \leq 2^{-4k^3}/k^{k^2}$ and $n \geq 2^{2^{2^{\varepsilon^{-25}}}}$, then any $2$-coloring of the complete graph on $(2^k+\varepsilon)n$ vertices must contain a monochromatic $B_n \up k$. Therefore, as long as
	\[
	    n \geq 2^{2^{2^{(2^{-4k^3}/k^{k^2})^{-25}}}}=2^{2^{2^{k^{25k^2}2^{100 k^3}}}},
	\]
    this result will hold. Thus, for $n \geq 2^{2^{2^{k^{25k^2}2^{100 k^3}}}}$, we have
	\[
		r(B_n \up k) \leq 2^k n+\frac{n}{(\log \log \log n)^{1/25}},
	\]
	as desired.
\end{proof}

\section{Quasirandomness results}\label{sec:quasirandomness}

\subsection{The main result}

We begin by recalling the definition of quasirandomness from the introduction. Usually, quasirandomness is defined for a sequence of graphs and the right-hand side of the defining inequality is just $o(N^2)$. However, it will be more convenient for us to explicitly track the error parameter $\theta$, rather than hiding it in the little-$o$ notation.

\begin{Def}
	For any $\theta>0$, a $2$-coloring of the edges of $K_N$ is called \emph{$\theta$-quasirandom} if, for any disjoint $X$, $Y \subseteq V(K_N)$,
	\[
		\left|e_B(X,Y)- \frac12|X||Y| \right| \leq \theta N^2,
	\]
	where $e_B(X,Y)$ denotes the number of blue edges between $X$ and $Y$. Since the colors are complementary, this condition is equivalent to the analogous condition for the red edge count $e_R(X,Y)$. 
\end{Def}

\noindent With this, we can restate the main theorem of this section, Theorem \ref{thm:quasirandomness}.

\begin{thm:quasirandomness}
	For any $k \geq 2$ and any $0<\theta<\frac 12$, there is some $c=c(\theta,k)>0$ such that if a $2$-coloring of $K_N$ is not $\theta$-quasirandom for $N$ sufficiently large, then it contains a monochromatic $B_n\up k$ with $n=(2^{-k}+c)N$. 
\end{thm:quasirandomness}

\begin{rem}
	This result was conjectured by Nikiforov, Rousseau, and Schelp  \cite{NiRoSc}, who proved it in the case $k=2$. This case is very special because of Goodman's formula \cite{Goodman} for counting monochromatic triangles in $2$-colorings, no analogue of which exists for counting monochromatic cliques of larger size. As such, the approach we use to prove Theorem \ref{thm:quasirandomness} is substantially different and more complicated than that in \cite{NiRoSc}, though it is interesting to note that our new technique actually fails for $k=2$. As such, for completeness, we also present a proof of the $k=2$ case (modifying and simplifying the proof from \cite{NiRoSc}) in Section \ref{subsec:proof-of-k2-case}.
\end{rem}

As mentioned in the introduction, we will actually prove a strengthening of Theorem \ref{thm:quasirandomness}, restated here.

\begin{thm:strongquasirandom}
	For any $k \geq 2$ and any $0<\theta<\frac 12$, there is some $c_1=c_1(\theta,k)>0$ such that if a $2$-coloring of $K_N$ is not $\theta$-quasirandom for $N$ sufficiently large, then it contains at least $c_1 N^k$ monochromatic $K_k$, each of which has at least $(2^{-k}+c_1)N$ extensions to a monochromatic $K_{k+1}$.
\end{thm:strongquasirandom}

We will prove the contrapositive: if fewer than $c_1 N^k$ monochromatic cliques in a coloring have at least $(2^{-k}+c_1)N$ extensions, then the coloring is $\theta$-quasirandom. 
First, we apply an argument similar to that in Section \ref{sec:simplifiedproof} to find a good configuration within our coloring. As we know from Lemma \ref{lem:goodconfigsuffices}, this good configuration is enough to guarantee the existence of a monochromatic $B_n \up k$. Lemma \ref{lem:goodconfigsuffices} followed from Lemma \ref{xilemma}, which proves a lower bound for a certain function of real variables $x_1,\ldots,x_k$.
Here we need a stability version of Lemma \ref{xilemma}, which says that if our vector $(x_1,\ldots,x_k)$ is bounded away from $(\frac 12, \dots, \frac 12)$, then the function in Lemma \ref{xilemma} is bounded away from its minimum. Using this, one can strengthen Lemma \ref{lem:goodconfigsuffices} so that our good configuration not only contains a $B_n \up k$, but also guarantees many larger books, unless every part of the good configuration is $\varepsilon$-regular with density close to $\frac 12$ to the entire vertex set of the graph. Thus, under the assumption that our coloring contains few monochromatic $B\up k_{(2^{-k}+c_1)N}$, we can pull out a small part of the graph that is $\varepsilon$-regular to the whole vertex set. We then iterate this argument, repeatedly pulling out parts of the coloring that are $\varepsilon$-regular to $V$, until we have almost partitioned the graph into such a collection of parts. The property these parts satisfy is a form of weak regularity, which will be sufficient to prove that the coloring is quasirandom.

We begin with a simple consequence of Markov's inequality, saying that whenever a random clique among some large set of monochromatic cliques has many monochromatic extensions in expectation, then we can find many cliques with many extensions.

\begin{lem}\label{lem:markov-consequence}
	Let $\kappa,\xi \in (0,1)$, let $0<\nu <\xi$, and suppose that $\Q$ is a set of at least $\kappa N^k$ monochromatic $K_k$ in a $2$-coloring of $K_N$. Suppose that a uniformly random $Q \in\Q$ has at least $\xi N$ monochromatic extensions in expectation. Then the coloring contains at least $(\xi-\nu )\kappa N^k$ monochromatic $K_k$, each with at least $\nu N$ monochromatic extensions.
\end{lem}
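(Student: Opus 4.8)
The plan is to deduce this directly from Markov's inequality in its averaging form; no regularity or clique-counting machinery is needed. Write $\Q$ for the given family, so $|\Q| \geq \kappa N^k$, and for each $Q \in \Q$ let $X(Q)$ denote the number of monochromatic extensions of $Q$ to a monochromatic $K_{k+1}$. The only structural input I would use is the trivial bound $0 \leq X(Q) \leq N$ valid for every $Q$, since a monochromatic extension is just a vertex joined in one color to all $k$ vertices of $Q$. The hypothesis is that $\frac{1}{|\Q|}\sum_{Q \in \Q} X(Q) \geq \xi N$.

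Next I would let $\Q' \subseteq \Q$ be the subfamily of $Q$ with $X(Q) < \nu N$ and set $p = |\Q'|/|\Q|$. Bounding $X$ by $\nu N$ on $\Q'$ and by $N$ on $\Q \setminus \Q'$ gives
\[
	\xi N \leq \frac{1}{|\Q|}\sum_{Q \in \Q} X(Q) \leq p\cdot \nu N + (1-p)\cdot N = \bigl(1 - p(1-\nu)\bigr)N,
\]
so that $p \leq (1-\xi)/(1-\nu)$, which is where the hypothesis $\nu < \xi$ is used to keep things meaningful. Consequently the number of $Q \in \Q$ with $X(Q) \geq \nu N$ is
\[
	|\Q| - |\Q'| = (1-p)|\Q| \geq \frac{\xi - \nu}{1-\nu}\,|\Q| \geq \frac{\xi-\nu}{1-\nu}\,\kappa N^k \geq (\xi - \nu)\kappa N^k,
\]
using $0 < 1-\nu \leq 1$ in the final step. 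Each such $Q$ is a monochromatic $K_k$ with at least $\nu N$ monochromatic extensions, which is exactly the claimed conclusion.

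There is no genuine obstacle here; this is a routine pigeonholing step whose role is purely to convert an ``expected number of extensions'' statement (as produced, e.g., by Corollary~\ref{cor:randomclique} together with Lemma~\ref{xilemma}) into a statement about the existence of many individual cliques each with many extensions, in the form needed to contradict the hypothesis of Theorem~\ref{thm:strongquasirandom}. The only points requiring a moment's care are the trivial upper bound $X(Q) \leq N$ (one could replace $N$ by $N-k$, but it is not needed) and the bookkeeping that $\frac{\xi-\nu}{1-\nu} \geq \xi - \nu$.
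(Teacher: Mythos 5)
Your proof is correct and is essentially the same as the paper's: the paper applies Markov's inequality to $Y = N - X$, and your direct averaging bound $\xi N \leq p\nu N + (1-p)N$ is exactly that Markov step written out by hand, with the same final comparison $\frac{\xi-\nu}{1-\nu} \geq \xi-\nu$. No gaps; the trivial bound $X(Q)\leq N$ is the only structural input in both arguments.
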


\begin{proof}
	Let $X$ be the random variable counting the number of monochromatic extensions of a random $Q \in \Q$ and let $Y=N-X$. Then $Y$ is a nonnegative random variable with $\E[Y]=N-\E[X] \leq (1-\xi)N$. By Markov's inequality,
	\[
		\pr(X \leq \nu  N)=\pr \left( Y\geq (1-\nu ) N \right) \leq \frac{\E[Y]}{(1-\nu )N} \leq \frac{(1-\xi)N}{(1-\nu )N}=\frac{1-\xi}{1-\nu }.
	\]
	Thus, 
	\[
		\pr(X \geq \nu  N)\geq 1- \frac{1-\xi}{1-\nu }=\frac{\xi-\nu }{1-\nu } \geq \xi-\nu ,
	\]
	which implies that the number of $Q \in \Q$ with at least $\nu N$ extensions is at least $(\xi-\nu )|\Q| \geq (\xi-\nu )\kappa N^k$, as desired.
\end{proof}

By using a stability version of Jensen's inequality, we can obtain the promised stability variant of Lemma \ref{xilemma}, which is stated below. The proof is given in the appendix. 

\begin{lem}\label{stablexilemma}
	Let $k \geq 3$. Then, for every $\varepsilon_0>0$, there exists $\delta_0>0$ such that, for any $x_1,\ldots,x_k \in [0,1]$ with $|x_j- \frac 12| \geq \varepsilon_0$ for some $j$,
	\[
		\prod_{i=1}^k x_i +\frac 1k \sum_{i=1}^k (1-x_i)^k \geq 2^{1-k}+\delta_0.
	\]
\end{lem}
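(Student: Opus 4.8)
The plan is to deduce Lemma~\ref{stablexilemma} from Lemma~\ref{xilemma} by a compactness argument combined with a case analysis according to whether $\prod_i x_i$ itself is bounded away from $2^{-k}$. Write $f(x_1,\dots,x_k) = \prod_{i=1}^k x_i + \frac1k\sum_{i=1}^k (1-x_i)^k$ and let $K = \{(x_1,\dots,x_k)\in[0,1]^k : |x_j-\tfrac12|\ge\varepsilon_0 \text{ for some } j\}$. This $K$ is a closed, hence compact, subset of $[0,1]^k$, and $f$ is continuous on it, so $f$ attains a minimum on $K$, say at some point $p$. By Lemma~\ref{xilemma} we have $f(p)\ge 2^{1-k}$. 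The entire statement therefore reduces to showing that $f(p) > 2^{1-k}$ strictly, and then one simply takes $\delta_0 = f(p) - 2^{1-k} > 0$.

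So the heart of the matter is: the only points of $[0,1]^k$ at which $f$ attains the value $2^{1-k}$ are those with all $x_i = \tfrac12$; equivalently, equality in Lemma~\ref{xilemma} forces $x_1 = \dots = x_k = \tfrac12$, and this point lies outside $K$. To see this I would revisit the proof of Lemma~\ref{xilemma} and track the equality case. The standard proof (``essentially \cite[Lemma~8]{Conlon}'') goes via convexity: one bounds $\prod x_i$ below using that $t\mapsto t^k$ — or rather an appropriate substitution — is convex, and applies Jensen's inequality to the quantities $1-x_i$, so that $\frac1k\sum(1-x_i)^k \ge \big(\frac1k\sum(1-x_i)\big)^k = (1-\bar x)^k$ where $\bar x = \frac1k\sum x_i$; combined with AM--GM $\prod x_i \le \bar x^k$ run in the other direction one reduces to the single-variable inequality $g(t) := \text{(something)} \ge 2^{1-k}$, whose minimum is at $t=\tfrac12$. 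Equality in Jensen's inequality for the strictly convex function $t\mapsto t^k$ (on $[0,1]$, $k\ge 2$, it is strictly convex away from degenerate cases) forces all $1-x_i$ equal, i.e. all $x_i$ equal to a common value $t$; equality in the one-variable inequality then forces $t = \tfrac12$. Hence $f = 2^{1-k}$ only at $(\tfrac12,\dots,\tfrac12)$.

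Concretely, I would phrase it as follows. Suppose for contradiction that there is a sequence $x^{(m)} \in K$ with $f(x^{(m)}) \to 2^{1-k}$. By compactness pass to a convergent subsequence $x^{(m)} \to p \in K$; by continuity $f(p) = 2^{1-k}$. By the equality analysis of Lemma~\ref{xilemma}, $p = (\tfrac12,\dots,\tfrac12)$, contradicting $p\in K$ (since $|\tfrac12 - \tfrac12| = 0 < \varepsilon_0$). Therefore $\inf_{K} f > 2^{1-k}$, and since the infimum is attained it equals $2^{1-k} + \delta_0$ for some $\delta_0 > 0$, which is exactly the claim. Note this produces a $\delta_0$ depending on $\varepsilon_0$ and $k$ but gives no explicit rate — that is fine for the application, which only needs positivity.

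The main obstacle is pinning down the equality case of Lemma~\ref{xilemma} cleanly, since the lemma as stated is only an inequality and the equality analysis depends on exactly which convexity argument one uses. If the cleanest route is not to dissect the existing proof but instead to argue directly, one can alternatively observe: if some $x_j \ne \tfrac12$ then perturbing $x_j$ toward $\tfrac12$ strictly decreases $f$ provided the other coordinates are held fixed — because $\partial f/\partial x_j = \prod_{i\ne j} x_i - (1-x_j)^{k-1}$, and at a point where $f = 2^{1-k}$ one can show this partial derivative is nonzero unless $x_j = \tfrac12$ — hence $(\tfrac12,\dots,\tfrac12)$ is the unique minimizer; this is a routine but slightly fiddly Lagrange-type computation. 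Since the paper defers the proof to the appendix, either route is acceptable there; I would present the compactness reduction in the main text (as above) and relegate the equality/monotonicity computation to the appendix alongside the proof of Lemma~\ref{xilemma}, reusing as much of that proof's structure as possible.
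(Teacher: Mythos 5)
Your compactness reduction is fine and is in fact how the paper's proof begins: since $K=\{x\in[0,1]^k:\ |x_j-\tfrac12|\ge\varepsilon_0\ \text{for some } j\}$ is compact and $F(x)=\prod_i x_i+\frac1k\sum_i(1-x_i)^k$ is continuous, the lemma (which only asks for some $\delta_0>0$, with no rate) reduces to showing that $(\tfrac12,\dots,\tfrac12)$ is the \emph{unique} point of $[0,1]^k$ where $F$ attains the value $2^{1-k}$. But that uniqueness is the entire content of the lemma, and neither of your two routes establishes it. Route (a) rests on a misremembered proof of Lemma~\ref{xilemma}: Jensen gives $\frac1k\sum_i(1-x_i)^k\ge(1-\bar x)^k$, but AM--GM gives $\prod_i x_i\le\bar x^k$, an upper bound, which cannot be combined with it to lower bound $F$; and the one-variable inequality your sketch would reduce to, namely $F(x)\ge\bar x^k+(1-\bar x)^k$, is simply false --- for $k=3$ and $(x_1,x_2,x_3)=(0.75,0.8,0.85)$ one has $F\approx0.519$ while $\bar x^3+(1-\bar x)^3=0.52$. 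The actual proof of Lemma~\ref{xilemma} fixes $z=\prod_i x_i$ and applies Jensen multiplicatively to $\varphi(y)=(1-e^y)^k$, whose convexity holds only when all $x_i\in(\tfrac1k,1)$; so even extracting an equality case from that proof requires separate treatment of the regimes $x_j\le\tfrac1k$ and $x_j$ near $1$, and of $k=3,4$, where the crude bound $\frac1k(1-\tfrac1k)^k>2^{1-k}$ fails.

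Route (b) asserts exactly the missing claim without proof. By Lemma~\ref{xilemma}, any point with $F=2^{1-k}$ is a global minimizer, so if it is interior then \emph{all} partial derivatives vanish there; hence the statement ``$\partial F/\partial x_j\ne0$ unless $x_j=\tfrac12$'' amounts to classifying all critical points of $F$ on the cube (boundary included) attaining the value $2^{1-k}$. This is not a routine computation: the critical equations $\prod_{i\ne j}x_i=(1-x_j)^{k-1}$ only force $x_j(1-x_j)^{k-1}$ to equal a common value $z$, so the coordinates may a priori take two distinct values, and ruling out such mixed critical points is precisely the case analysis the paper carries out explicitly for $k=3$ and $k=4$ (already fairly involved for $k=4$, with extra boundary cases). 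For $k\ge5$ the paper deliberately avoids this classification: it disposes of the regions $x_j\le\frac{1+\xi}{k}$ and $x_j\ge1-\xi'$ directly (the latter via the sign of $\partial F/\partial x_k$ at $x_k=1$ together with compactness of the boundary and continuity), and on the remaining box it applies H\"older's defect formula --- a quantitative stability version of Jensen --- to $\varphi(y)=(1-e^y)^k$, whose second derivative is uniformly bounded below there. So the concrete gap is this: you have correctly reduced the lemma to uniqueness of the minimizer, but your argument for that uniqueness is either based on a false intermediate inequality or is an assertion of the very fact to be proved.
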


\begin{rem}
	This lemma is actually false for $k=2$, since 
	the minimum value of $\frac 12$ is attained everywhere on the line $x_1+x_2=1$. Note also that the precise numerical dependence between $\delta_0$ and $\varepsilon_0$ depends in a complicated way on $k$, but it is of the form $\delta_0=\Omega_k(\varepsilon_0^2)$.
\end{rem}

Using Lemma \ref{stablexilemma}, we can prove the strengthening of Lemma \ref{lem:goodconfigsuffices} alluded to earlier, which says that a good configuration whose blue density to the rest of the graph is not close to $\frac 12$ actually yields a monochromatic book with substantially more than $2^{-k}N$ pages.

\begin{lem}\label{densityhalf}
	Let $0<\varepsilon_0<\frac 14$ and let $\delta_0=\delta_0(\varepsilon_0)$ be the parameter from Lemma \ref{stablexilemma}. Suppose $\delta\leq \delta_0 \varepsilon_0/2$,  $\eta\leq \delta^{2k^2}$, and $C_1,\ldots,C_k$ is a $(k,\eta,\delta)$-good configuration in a $2$-coloring of $K_N$. Define
	\[
		B_i=\{v\in K_N: |d_B(v,C_i)-\tfrac 12| \geq \varepsilon_0\}.
	\]
	If $|B_i| \geq \varepsilon_0 N$ for some $i$, then the coloring contains a monochromatic book $B_{(2^{-k}+c)N}\up k$, where $c=\delta_0 \varepsilon_0/4$. Moreover, if $|C_i| \geq \alpha N$ for all $i$ and some $\alpha>0$, then there exists some $0<c_1<c$ depending on $\varepsilon_0$, $\alpha$, and $\delta$ such that the coloring contains at least $c_1 N^k$ monochromatic $K_k$, each of which has at least $(2^{-k}+c_1) N$ monochromatic extensions.
\end{lem}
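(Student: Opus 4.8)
The plan is to follow the proof of Lemma~\ref{lem:goodconfigsuffices} almost verbatim, feeding in the stability estimate Lemma~\ref{stablexilemma} wherever Lemma~\ref{xilemma} was used (note $k\ge 3$ here, since Lemma~\ref{stablexilemma} requires it, and recall $\delta\le\delta_0\varepsilon_0/2<\tfrac14$ since $\delta_0<2$ and $\varepsilon_0<\tfrac14$). For a vertex $v$ and $i\in[k]$ set $x_i(v)=d_B(v,C_i)$ and let $j$ be an index with $|B_j|\ge\varepsilon_0 N$. Applying Lemma~\ref{xilemma} for every $v$ and the sharper Lemma~\ref{stablexilemma} for the $v\in B_j$ (for which $|x_j(v)-\tfrac12|\ge\varepsilon_0$) and then summing over all $v$, I get
\[
	\sum_{v}\prod_{i=1}^{k}x_i(v)+\frac1k\sum_{i=1}^{k}\sum_{v}(1-x_i(v))^{k}\ \ge\ 2^{1-k}N+\delta_0\varepsilon_0 N .
\]
Hence one of the two left-hand summands is at least $(2^{-k}+\tfrac12\delta_0\varepsilon_0)N$, which splits the argument into the same two cases as in Lemma~\ref{lem:goodconfigsuffices}.

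In the first case, $\sum_v\prod_i x_i(v)\ge(2^{-k}+\tfrac12\delta_0\varepsilon_0)N$, and I apply Corollary~\ref{cor:randomclique} in the blue graph to the $k$-tuple $(C_1,\dots,C_k)$: every pair $(C_i,C_j)$ (including $i=j$) is $\eta$-regular and $\prod_{i<j}d_B(C_i,C_j)\ge\delta^{\binom k2}$, while $\eta\le\delta^{2k^2}$ together with $\delta<\tfrac14$ ensures $\eta\le(\delta^{\binom k2})^3/k^2$. Summing the bound of Corollary~\ref{cor:randomclique} over all $v$, a uniformly random blue $K_k$ spanned by $(C_1,\dots,C_k)$ has in expectation at least
\[
	\sum_v\Bigl(\prod_i x_i(v)-4\delta^{\binom k2}\Bigr)\ \ge\ \bigl(2^{-k}+\tfrac12\delta_0\varepsilon_0-4\delta^{\binom k2}\bigr)N\ \ge\ (2^{-k}+c)N
\]
blue extensions, where $c=\delta_0\varepsilon_0/4$ (using $\binom k2\ge3$ and $\delta\le\delta_0\varepsilon_0/2$ to swallow the error term). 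Thus some blue $K_k$ has at least $(2^{-k}+c)N$ blue extensions, i.e.\ a blue $B_{(2^{-k}+c)N}\up k$. In the second case, some $i$ satisfies $\sum_v(1-x_i(v))^k\ge(2^{-k}+\tfrac12\delta_0\varepsilon_0)N$, and the identical computation, with Corollary~\ref{cor:randomclique} applied in the red graph to $k$ copies of $C_i$ (using $d_R(C_i)\ge\delta$, and absorbing the $O(1)$ loss from $d_R(v,C_i)\ne 1-x_i(v)$ when $v\in C_i$ into the error, exactly as in the footnote to Lemma~\ref{lem:goodconfigsuffices}), gives a red $K_k$ inside $C_i$ with at least $(2^{-k}+c)N$ red extensions. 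This establishes the first assertion.

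For the ``moreover'' part I would additionally use the counting lemma, Lemma~\ref{countinglemma}, together with Lemma~\ref{lem:markov-consequence}. Since now $|C_i|\ge\alpha N$ for all $i$ and $\eta\le\delta^{2k^2}$ with $\delta<\tfrac14$, Lemma~\ref{countinglemma} shows that in the first case the set $\Q$ of blue $K_k$ spanned by $(C_1,\dots,C_k)$ has size at least $(\delta^{\binom k2}-\eta\binom k2)(\alpha N)^k\ge\tfrac12\delta^{\binom k2}\alpha^k N^k=:\kappa N^k$, while by the previous paragraph a random element of $\Q$ has at least $(2^{-k}+c)N$ monochromatic extensions in expectation. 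Lemma~\ref{lem:markov-consequence} with $\xi=2^{-k}+c$ and $\nu=2^{-k}+c/2$ then produces at least $\tfrac c2\kappa N^k$ monochromatic $K_k$, each with at least $(2^{-k}+\tfrac c2)N$ monochromatic extensions; taking $c_1=\tfrac c4\delta^{\binom k2}\alpha^k$, which is positive, strictly less than $c$, and a function of $\varepsilon_0,\alpha,\delta$ (and $k$), yields the claim. The second case is handled identically, with $\Q$ the set of at least $\tfrac12\delta^{\binom k2}\alpha^k N^k$ red $K_k$ inside $C_i$.

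I expect the only genuine work to be the parameter bookkeeping: checking that $\eta\le\delta^{2k^2}$ is small enough both to invoke Corollary~\ref{cor:randomclique} (whose density floor here is $\delta^{\binom k2}$, not $\delta$, since a good---not great---configuration only guarantees pairwise densities $\ge\delta$) and to make $\eta\binom k2$ negligible in Lemma~\ref{countinglemma}, and that the $4\delta^{\binom k2}$ error in Corollary~\ref{cor:randomclique} is dominated by the stability gain $\tfrac12\delta_0\varepsilon_0$. These all reduce to $\delta$ being small, which holds since $\delta\le\delta_0\varepsilon_0/2<\tfrac14$; everything else is a direct transcription of the proof of Lemma~\ref{lem:goodconfigsuffices}.
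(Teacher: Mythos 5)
Your proof is correct and follows the paper's argument essentially verbatim: the same use of Lemma~\ref{xilemma} for general $v$ and Lemma~\ref{stablexilemma} for $v\in B_j$, the same split into two cases, the same application of Corollary~\ref{cor:randomclique} with density floor $\delta^{\binom k2}$, and the same counting-lemma-plus-Lemma~\ref{lem:markov-consequence} argument for the ``moreover'' part. The parameter bookkeeping (including the constant $c_1=\tfrac c4\delta^{\binom k2}\alpha^k$, which equals the paper's $c\kappa/2$ with $\kappa=\tfrac12\delta^{\binom k2}\alpha^k$) also matches.
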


\begin{proof}
	First, as in the proof of Lemma \ref{lem:goodconfigsuffices}, observe that by the counting lemma, Lemma \ref{countinglemma}, the number of blue $K_k$ with one vertex in each $C_i$ is at least
	\begin{align*}
		\left( \prod_{1\leq i<j \leq k}d_B(C_i,C_j)-\eta \binom k2 \right) \prod_{i=1}^k |C_i|&\geq \left( \delta^{\binom k2}-\eta \binom k2 \right) \prod_{i=1}^k|C_i| >0,
	\end{align*}
	so there is at least one blue $K_k$ spanning $C_1,\ldots,C_k$. Similarly, the number of red $K_k$ inside a given $C_i$ is at least
	\[
		\left( d_R(C_i)^{\binom k2}-\eta \binom k2 \right)|C_i|^k \geq \left( \delta^{\binom k2}-\eta \binom k2 \right) |C_i|^k >0,
	\]
	so each $C_i$ contains at least one red $K_k$.

	Suppose, without loss of generality, that $|B_1| \geq \varepsilon_0 N$. For every $v \in V$, let $x_i(v)=d_B(v,C_i)$. By Lemma \ref{stablexilemma}, for $v \in B_1$, we have
	\[
		\prod_{i=1}^k x_i(v)+\frac 1k \sum_{i=1}^k (1-x_i(v))^k \geq 2^{1-k}+\delta_0,
	\]
	where $\delta_0>0$ depends on $\varepsilon_0$. Additionally, for every $v \in V \setminus B_1$, we know, by Lemma \ref{xilemma}, that
	\[
		\prod_{i=1}^k x_i(v)+ \frac 1k \sum_{i=1}^k (1-x_i(v))^k \geq 2^{1-k}.
	\]
	Summing these two inequalities over all vertices, we get that
	\[
		\sum_{v \in V} \prod_{i=1}^k x_i(v)+\frac 1k \sum_{i=1}^k \sum_{v \in V} (1-x_i(v))^k \geq 2^{1-k}N+\delta_0 |B_1|\geq (2^{1-k}+\delta_0 \varepsilon_0)N.
	\]
	Now, we argue as in the proof of Lemma \ref{lem:goodconfigsuffices}. One of the two summands above must be at least $(2^{-k}+\frac {\delta_0 \varepsilon_0} 2)N$. If it is the first, we apply Corollary \ref{cor:randomclique}, using the fact that $\prod_{i<j} d_B(C_i,C_j) \geq \delta^{\binom k2}$ and $\eta \leq \delta^{2k^2} \leq (\delta^{\binom k2})^3/k^2$. Then summing the result of Corollary \ref{cor:randomclique} over all $v \in V$ implies that a random blue $K_k$ spanning $C_1,\ldots,C_k$ will have in expectation at least
	\begin{align*}
		\sum_{v \in V} \left( \prod_{i=1}^k x_i(v)-4 \delta^{\binom k2} \right) \geq \left(2^{-k}+\frac{\delta_0 \varepsilon_0}2- 4 \delta^{\binom k2}\right)N \geq (2^{-k}+c)N
	\end{align*}
	blue extensions, since $c=\delta_0 \varepsilon_0/4$. On the other hand, if the second summand is the larger one, then we find that a random red $K_k$ inside some $C_i$ will have in expectation at least\footnote{As in the proof of Lemma \ref{lem:goodconfigsuffices}, $d_R(v,C_i)$ is not quite the same as $1-d_B(v,C_i)$ if $v \in C_i$, but this discrepancy can be absorbed into the error term.} 
	\[
		\sum_{v \in V} \left( (1-x_i(v))^k-4 \delta^{\binom k2} \right) \geq \left(2^{-k}+\frac{\delta_0 \varepsilon_0}2- 4 \delta^{\binom k2}\right)N \geq (2^{-k}+c)N
	\]
	red extensions.

	To prove the last statement in the lemma, that we can in fact find $c_1 N^k$ monochromatic cliques each with at least $(2^{-k}+c_1) N$ monochromatic extensions, we apply Lemma \ref{lem:markov-consequence}, using the fact that the argument just presented actually finds a set of cliques with at least $(2^{-k}+c)N$ extensions in expectation. To apply Lemma \ref{lem:markov-consequence}, we need only check that the sets of cliques in question are large, of size at least $\Omega(N^k)$. But this again follows from the counting lemma, Lemma \ref{countinglemma}. Specifically, since $|C_i| \geq \alpha N$ for all $i$, Lemma \ref{countinglemma} implies that the number of blue $K_k$ spanning $C_1,\ldots,C_k$ is at least
	\[
		\left( \prod_{1 \leq i<j\leq k} d_B(C_i,C_j)-\eta \binom k2 \right) \prod_{i=1}^k |C_i| \geq \left( \delta^{\binom k2}-\eta \binom k2 \right) \alpha^k N^k \geq \frac 12 \delta^{\binom{k}{2}} \alpha^k N^k
	\]
	and similarly for the number of red $K_k$ inside $C_i$. 
	Thus, if we define $\kappa = \frac 12 \delta^{\binom{k}{2}} \alpha^k$,
	then we find that all the sets of cliques we considered in the above argument contain at least $\kappa N^k$ cliques. Applying Lemma \ref{lem:markov-consequence} with this choice of $\kappa$, $\xi=2^{-k}+c$, and $\nu=2^{-k}+c/2$, we see that the coloring contains at least $(\xi-\nu)\kappa N^k$ monochromatic cliques, each with at least $\nu N$ monochromatic extensions. Taking $c_1=\min\{(\xi-\nu)\kappa,\nu-2^{-k}\} = c\kappa/2$ gives the result. 
\end{proof}

The previous lemma shows that if our coloring has no monochromatic $B\up k_{(2^{-k}+c)N}$, then, for every good configuration $C_1,\ldots,C_k$, most vertices have blue density into $C_i$ that is close to $\frac 12$. The next lemma strengthens this, saying that, in fact, the good configuration is regular to the rest of the graph. We say that a pair $(X,Y)$ of vertex sets in a graph is \emph{$(p,\varepsilon)$-regular} if, for every $X' \subseteq X$, $Y' \subseteq Y$ with $|X'| \geq \varepsilon |X|$, $|Y'| \geq \varepsilon |Y|$, we have
\[
	|d(X',Y')-p| \leq \varepsilon.
\]
Saying that $(X,Y)$ is $(p,\varepsilon)$-regular is essentially equivalent to saying that $(X,Y)$ is $\varepsilon$-regular and has edge density $p \pm \varepsilon$, 
so it is not strictly necessary to make this new definition. However, the next lemma is most conveniently stated in the language of $(p,\varepsilon)$-regularity, rather than that
of $\varepsilon$-regularity.

\begin{lem}\label{lem:regularityofgoodconfig}
	Fix $0<\varepsilon_1<\frac 14$, let $\varepsilon_0=\varepsilon_1^2/2$, and let $\delta_0=\delta_0(\varepsilon_0)$ be the parameter from Lemma \ref{stablexilemma}. Let $0<\delta\leq \delta_0 \varepsilon_0/4=O_k(\varepsilon_1^6)$ and $0<\eta\leq  2^{-2k^2}\varepsilon_1 \delta^{2k^2}$ be other parameters and suppose that $C_1,\ldots,C_k$ is a $(k,\eta,\delta)$-good configuration in a $2$-coloring of $K_N$. If there exists some $i \in [k]$ such that the pair $(C_i,V)$ is not $(\frac 12,\varepsilon_1)$-regular, then the coloring contains a monochromatic $B\up k_{(2^{-k}+c)N}$ for $c=\delta_0 \varepsilon_0/4$. Moreover, if $|C_i| \geq \alpha N$ for all $i$ and some $\alpha>0$, then the coloring contains at least $c_1 N^k$ monochromatic $K_k$, each with at least $(2^{-k}+c_1)N$ extensions, for some $0<c_1<c$ depending on $\varepsilon_1$, $\delta$, and $\alpha$.
\end{lem}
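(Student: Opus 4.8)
The plan is to reduce Lemma~\ref{lem:regularityofgoodconfig} to Lemma~\ref{densityhalf}, using the general principle that ``$\varepsilon$-regularity'' and ``the density on all large subsets is close to the global density'' are quantitatively equivalent. Suppose $(C_i,V)$ is not $(\tfrac12,\varepsilon_1)$-regular for some $i$. Then by definition there are subsets $C_i'\subseteq C_i$ and $V'\subseteq V$ with $|C_i'|\geq\varepsilon_1|C_i|$, $|V'|\geq\varepsilon_1 N$, and $|d(C_i',V')-\tfrac12|>\varepsilon_1$; say, without loss of generality, $d(C_i',V')>\tfrac12+\varepsilon_1$ (the other case is symmetric, interchanging the roles of red and blue, or equivalently looking at the red density). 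I would first pass from the pair $(C_i',V')$ to a statement about single vertices: since $d(C_i',V')=\tfrac{1}{|C_i'|}\sum_{v\in V'}d(v,C_i')>\tfrac12+\varepsilon_1$, an averaging argument shows that a positive fraction of the vertices $v\in V'$ have $d(v,C_i')>\tfrac12+\tfrac{\varepsilon_1}{2}$. Concretely, if fewer than $\tfrac{\varepsilon_1}{2}|V'|$ vertices had density exceeding $\tfrac12+\tfrac{\varepsilon_1}{2}$, the average would be at most $\tfrac{\varepsilon_1}{2}\cdot 1+(1-\tfrac{\varepsilon_1}{2})(\tfrac12+\tfrac{\varepsilon_1}{2})<\tfrac12+\varepsilon_1$, a contradiction. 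Hence there is a set of at least $\tfrac{\varepsilon_1}{2}|V'|\geq\tfrac{\varepsilon_1^2}{2}N=\varepsilon_0 N$ vertices $v$ with $d_B(v,C_i')>\tfrac12+\varepsilon_0$ (after recalling $d_B(v,C_i')=d(v,C_i')$ in the blue graph).

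The next step is to upgrade ``high blue density into $C_i'$'' to ``blue density bounded away from $\tfrac12$ into $C_i$'', so that we can feed the configuration into Lemma~\ref{densityhalf}. Here is where the $\eta$-regularity of $C_i$ is used: since $|C_i'|\geq\varepsilon_1|C_i|$ and $(C_i,C_i)$ is $\eta$-regular, for any vertex $v$ the densities $d_B(v,C_i)$ and $d_B(v,C_i')$ cannot differ by much on average. More carefully, I would argue that the set $C_i'$ is a large subset of $C_i$, so that $d_B(v, C_i) \geq \frac{|C_i'|}{|C_i|} d_B(v, C_i') \geq \varepsilon_1 d_B(v,C_i')$ trivially, but that crude bound is not enough — instead one wants that for most $v$ with $d_B(v,C_i')$ large, also $d_B(v,C_i)$ is large. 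This follows by another averaging step combined with the regularity of $C_i$: the set of $v\in C_i$ (and similarly $v\in V$, using that $C_i$ is $\eta$-regular hence its density profile is ``flat'') for which $d_B(v,C_i)$ deviates from $d_B(v,C_i')$ by more than, say, $\varepsilon_0/2$ is small — at most $O(\eta/\varepsilon_0)N$ or so. Subtracting off this small set, we still have at least $\varepsilon_0 N - O(\eta/\varepsilon_0) N \geq \tfrac12\varepsilon_0 N$ vertices $v$ with $|d_B(v,C_i)-\tfrac12|\geq\varepsilon_0/2$; after harmlessly rescaling $\varepsilon_0$ by a constant factor (or just being slightly more generous in the initial choices), this is precisely the hypothesis $|B_i|\geq\varepsilon_0 N$ of Lemma~\ref{densityhalf}, with the constant $\delta$ chosen small enough ($\delta\leq\delta_0\varepsilon_0/4$ as in the statement, perhaps shrinking $\delta_0$ to accommodate the constant loss) that the quantitative requirements $\delta\leq\delta_0\varepsilon_0/2$ and $\eta\leq\delta^{2k^2}$ of that lemma are met; the bound $\eta\leq 2^{-2k^2}\varepsilon_1\delta^{2k^2}$ in the present statement is exactly what provides the slack for the $O(\eta/\varepsilon_0)$ error above.

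Once the hypotheses of Lemma~\ref{densityhalf} are verified, both conclusions follow immediately: the first part gives a monochromatic $B^{(k)}_{(2^{-k}+c)N}$ with $c=\delta_0\varepsilon_0/4$, and the second part — under the additional assumption $|C_i|\geq\alpha N$ for all $i$ — gives at least $c_1 N^k$ monochromatic $K_k$ each with at least $(2^{-k}+c_1)N$ monochromatic extensions, with $c_1=c_1(\varepsilon_0,\alpha,\delta)$, hence $c_1=c_1(\varepsilon_1,\alpha,\delta)$ since $\varepsilon_0=\varepsilon_1^2/2$. The main obstacle I anticipate is the bookkeeping in the second step: carefully quantifying, via the $\eta$-regularity of $C_i$, how closely the ``local'' densities $d_B(v,C_i')$ track the ``global'' densities $d_B(v,C_i)$ for all but a negligible set of vertices $v$, and then tracking all the constant-factor losses (in $\varepsilon_0$, $\delta$, and $\eta$) so that the explicit hypotheses of Lemma~\ref{densityhalf} come out exactly as stated. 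The actual regularity and counting inputs are entirely standard; the delicacy is purely in arranging the chain of inequalities among the parameters $\varepsilon_1,\varepsilon_0,\delta_0,\delta,\eta$ so that nothing is circular and every invocation is licensed.
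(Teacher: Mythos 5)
Your first averaging step is fine and matches the paper, but the second step---upgrading ``$d_B(v,C_i')$ far from $\tfrac12$'' to ``$d_B(v,C_i)$ far from $\tfrac12$'' for most $v$---is not a bookkeeping issue: it is false. The $\eta$-regularity of the pair $(C_i,C_i)$ only constrains edges \emph{inside} $C_i$; it gives no control whatsoever over the bipartite graph between $V\setminus C_i$ and $C_i$, which is exactly the graph whose irregularity you are trying to exploit. Concretely, color the edges so that every vertex of $D_2$ is blue to all of a fixed subset $C_i'$ of half the vertices of $C_i$ and red to the other half, while the edges inside $C_i$ and between the $C_j$'s are random-like with the required densities. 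Then $C_1,\ldots,C_k$ is a perfectly good configuration, $(C_i,V)$ is badly non-$(\tfrac12,\varepsilon_1)$-regular (witnessed by $(C_i',D_2)$), yet $d_B(v,C_i)=\tfrac12$ exactly for every $v\in D_2$, so the set $B_i$ of Lemma \ref{densityhalf} for the \emph{original} configuration can be empty and your reduction has nothing to apply the lemma to. (Your ``similarly $v\in V$'' parenthesis is where this breaks; the analogous claim for $v\in C_i$ can be salvaged via regularity of large sets, but those vertices are not the relevant ones.)

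The paper sidesteps this by never transferring the deviation back to $C_i$: it simply replaces $C_i$ by the witnessing subset $C_i'$ and applies Lemma \ref{densityhalf} to the new collection $C_1',C_2,\ldots,C_k$ (after relabeling so $i=1$), which by the hereditary property of regularity is a $(k,\eta',\delta')$-good configuration with $\eta'=\eta/\varepsilon_1$ and $\delta'=\delta-\eta$, and for which the set $D_2$ is by construction a legitimate $B_1$ with $|D_2|\geq\tfrac{\varepsilon_1}2|D|\geq\varepsilon_0 N$. This is also the actual purpose of the hypothesis $\eta\leq 2^{-2k^2}\varepsilon_1\delta^{2k^2}$: it guarantees $\eta'\leq(\delta')^{2k^2}$, i.e., that the degraded parameters still satisfy the hypotheses of Lemma \ref{densityhalf}, not the $O(\eta/\varepsilon_0)$ error term you invoke. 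For the second conclusion, one uses $|C_1'|\geq\alpha\varepsilon_1 N$ so the ``moreover'' part of Lemma \ref{densityhalf} applies with $\alpha$ replaced by $\alpha\varepsilon_1$. If you rewrite your argument with this substitution (and treat the case $d_B(C_i',D)\leq\tfrac12-\varepsilon_1$ symmetrically), the rest of your outline goes through.
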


\begin{proof}
	Without loss of generality, suppose that $(C_1,V)$ is not $(\frac 12, \varepsilon_1)$-regular. Then there exist $C_1' \subseteq C_1$ and $D \subseteq V$ such that
	\[
		\left|d_B(C_1',D)-\frac 12 \right| \geq \varepsilon_1,
	\]
	where $|C_1'| \geq \varepsilon_1 |C_1|$ and $|D| \geq \varepsilon_1 N$. Suppose first that $d_B(C_1',D) \geq \frac 12+\varepsilon_1$. Let $D_1 \subseteq D$ denote the set of vertices $v \in D$ with $d_B(v,C_1')< \frac 12 +\frac{\varepsilon_1}{2}$ and $D_2=D \setminus D_1$. Then
	\[
		\left(\frac 12+\varepsilon_1\right) |C_1'||D| \leq \sum_{v \in D_1} e_B(v,C_1')+\sum_{v \in D_2} e_B(v,C_1') \leq \left( \frac 12+ \frac {\varepsilon_1}2 \right) |C_1'||D|+|C_1'||D_2|,
	\]
	which implies that $|D_2| \geq \frac{\varepsilon_1}{2}|D|$.

	Now consider the collection of sets $C_1',C_2,C_3,\ldots,C_k$. By the hereditary property of regularity, this is a $(k,\eta',\delta')$-good configuration, where $\eta'=\eta/\varepsilon_1$ and $\delta'=\delta- \eta$. Moreover, we have a set $D_2 \subseteq V$ with $|d_B(v,C_1') - \frac 12| \geq \varepsilon_1/2$ for all $v \in D_2$ and
	\[
		|D_2| \geq \frac{\varepsilon_1}{2}|D| \geq \frac{\varepsilon_1^2}{2}N,
	\]
	so we may apply Lemma \ref{densityhalf} with $B_1=D_2$ and $\varepsilon_0=\varepsilon_1^2/2$ to conclude that our coloring contains a monochromatic $B\up k_{(2^{-k}+c)N}$ for some $c$ depending on $\varepsilon_0$. If we assume that $|C_i| \geq \alpha N$ for all $i$, then we also get that $|C_1'| \geq \alpha \varepsilon_1 N$, so the second part of Lemma \ref{densityhalf} implies that our coloring contains at least $c_1 N^k$ monochromatic $K_k$, each of which has at least $(2^{-k}+c_1)N$ monochromatic extensions, for some $c_1<c$ depending on $\varepsilon_1$, $\alpha,$ and $\delta$.

	The above argument worked under the assumption that $d_B(C_1',D) \geq \frac 12 +\varepsilon_1$, so we now need to deal with the case where $d_B(C_1',D) \leq \frac 12- \varepsilon_1$. However, this case is similar: we first find a large subset $D_2 \subseteq D$ with $d_B(v,C_1') \leq \frac 12 - \frac{\varepsilon_1}{2}$ for all $v \in D_2$ and then the remainder of the argument is identical. 
\end{proof}

The next technical lemma we need is the following, which spells out the inductive step of the procedure outlined earlier, wherein we repeatedly pull out subsets of our coloring that are regular to the remainder of the graph. 

\begin{lem}\label{lem:pull-out-regular}
	Fix $0<\varepsilon\leq 1/25 k$ and consider a $2$-coloring of $K_N$ with vertex set $V$, where $N$ is sufficiently large in terms of $\varepsilon$. Suppose that $A_1,\ldots,A_\ell$ are disjoint subsets of $V$ such that $(A_i,V)$ is $(\frac 12, \varepsilon^2)$-regular for all $i$. Let $W= V \setminus (A_1 \cup \dotsb \cup A_\ell)$ and suppose that $|W| \geq \varepsilon N$. Then either there is some non-empty $A_{\ell+1} \subseteq W$ such that $(A_{\ell+1},V)$ is $(\frac 12, \varepsilon^2)$-regular or the coloring contains at least $c_1 N^k$ monochromatic $K_k$, each with at least $(2^{-k}+c_1) N$ monochromatic extensions, where $c_1>0$ depends only on $\varepsilon$ and $k$. 
\end{lem}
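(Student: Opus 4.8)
The plan is to try to locate a good configuration inside $W$ whose parts are a fixed positive fraction of $N$, since such an object is exactly what Lemma~\ref{lem:regularityofgoodconfig} is designed to exploit. Concretely, I would run the reduced‑graph argument from Section~\ref{sec:simplifiedproof} on the coloring restricted to $W$: apply Lemma~\ref{reglem} to the red graph on $W$, with a regularity parameter depending only on $\varepsilon$ and $k$, so that $W$ is partitioned into $m$ parts, each $\eta$‑regular and with all but a small fraction of pairs regular; assume without loss of generality that at least $m/2$ of the parts are red‑majority, and form the reduced graph on those parts by joining two of them exactly when the pair is regular with blue density at least $\delta$. If this reduced graph contains a copy of $K_k$, then the corresponding parts $C_1,\dots,C_k\subseteq W$ form a $(k,\eta,\delta)$‑good configuration with $|C_i|\ge |W|/m \ge (\varepsilon/m)N$. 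Applying Lemma~\ref{lem:regularityofgoodconfig} with $\varepsilon_1=\varepsilon^2$ and $\alpha=\varepsilon/m$ (legitimate once $\delta$ and $\eta$ are chosen to be suitably small polynomials in $\varepsilon$), we conclude that either some pair $(C_i,V)$ fails to be $(\tfrac12,\varepsilon^2)$‑regular — in which case we already obtain the required $c_1N^k$ monochromatic $K_k$, each with at least $(2^{-k}+c_1)N$ extensions — or every $(C_i,V)$ is $(\tfrac12,\varepsilon^2)$‑regular, in which case $A_{\ell+1}:=C_1\subseteq W$ is the new regular piece we were looking for.

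It remains to handle the case where the reduced graph of $W$ has no $K_k$. Here I would iterate the observation from Section~\ref{sec:simplifiedproof} that a $K_k$‑free reduced graph has a vertex of large non‑degree, in order to extract from $W$ a ``red‑biased'' object: a set $P\subseteq W$ with $|P|\ge\frac{|W|}{2(k-1)}\ge\varepsilon^2N$ (using $\tfrac1{2(k-1)}>\varepsilon$) that is essentially monochromatic red, so that the counting lemma, Lemma~\ref{countinglemma}, produces $\Omega_{\varepsilon,k}(N^k)$ red copies of $K_k$ inside $P$, each with $(1-o(1))|P|$ red extensions within $P$. To count the red extensions of these cliques across the $A_i$, I would exploit that, since $|P|\ge\varepsilon^2N$, the $(\tfrac12,\varepsilon^2)$‑regularity of $(A_i,V)$ forces $d_R(A_i,P)=\tfrac12\pm\varepsilon^2$; averaging over the cliques in $P$ and using convexity of $x\mapsto x^k$, a random red $K_k$ in $P$ has at least $(\tfrac12-\varepsilon^2)^k|A_i|$ red extensions in $A_i$ on average. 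Summing the two contributions, a random red $K_k$ in $P$ has, in expectation, at least
\[
	(1-o(1))|P|+\bigl(2^{-k}-o(1)\bigr)\bigl(N-|W|\bigr)\;\ge\;2^{-k}N+\Bigl(\tfrac1{2(k-1)}-2^{-k}\Bigr)|W|-o(N)\;\ge\;(2^{-k}+c_1)N
\]
red extensions, where the last inequality uses $|W|\ge\varepsilon N$, the fact that $\tfrac1{2(k-1)}>2^{-k}$, and the bound $\varepsilon\le1/25k$, which makes $\varepsilon$ small enough to absorb the deficit created by only knowing $d_R(A_i,P)$ up to $\pm\varepsilon^2$. Since there are $\Omega_{\varepsilon,k}(N^k)$ such cliques, Lemma~\ref{lem:markov-consequence} upgrades this expectation bound to the desired conclusion.

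The crux of the argument is this second case, and specifically the structural extraction: one must show that ``no $K_k$ in the reduced graph of $W$'' really does yield a near‑monochromatic set of size at least $\varepsilon^2N$ — rather than, say, only a single tiny part of the regularity partition — since it is precisely the lower bound $|P|\ge\varepsilon^2N$ that allows the regularity of the $A_i$ to be brought to bear on the edges between $P$ and $\bigcup_i A_i$. One must also carefully balance the two extension counts: the gain $\bigl(\tfrac1{2(k-1)}-2^{-k}\bigr)|W|$ coming from the extensions inside $P$ has to dominate both the $o(N)$ error terms and the small loss from replacing $d_R(A_i,P)$ by $\tfrac12\pm\varepsilon^2$, and it is in this comparison that the hypothesis $\varepsilon\le1/25k$ is used. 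Everything else — the two appeals to the counting lemma, the convexity step, and the passage from an expectation bound to $\Omega(N^k)$ good cliques via Lemma~\ref{lem:markov-consequence} — is routine.
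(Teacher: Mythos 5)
Your first case (the reduced graph of $W$ contains a $K_k$, giving a good configuration with parts of size $\Omega_{\varepsilon,k}(N)$, then Lemma \ref{lem:regularityofgoodconfig}) is essentially the paper's argument and is fine. The gap is in your second case. The observation you invoke --- that a $K_k$-free reduced graph has a vertex of non-degree at least roughly $m'/(k-1)$ --- only produces a \emph{star}: a single part $W_j$ together with a union $U$ of about $m'/(k-1)$ parts, each with red density at least $1-\delta$ to $W_j$. Nothing controls the colors inside $U$. For instance, for $k=3$ take the reduced graph to be a blow-up of $C_5$: the non-neighborhood of any vertex consists of its own group plus the two groups not adjacent to it, and those two groups are adjacent to each other, so the union of the non-neighborhood can have internal blue density bounded away from $0$. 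Hence ``iterating the observation'' does not yield your set $P$, and the assertion that $K_k$-freeness of the reduced graph forces a near-monochromatic red $P\subseteq W$ with $|P|\ge |W|/(2(k-1))$ is exactly the unproved crux; it is a nontrivial local-density-type statement, made harder by the quantitative requirement hidden in your ``$(1-o(1))|P|$'': for the gain $(\tfrac{1}{2(k-1)}-2^{-k})|W|\ge c_k\varepsilon N$ to survive, the blue density inside $P$ must be $O(\varepsilon)$, not merely $o(1)$. A secondary issue is that converting ``a random red $K_k$ in $P$'' into extension probabilities \`a la Corollary \ref{cor:randomclique} needs the host of the spine to be a regular set, and a union of $\Theta(m)$ parts is not a priori regular; the paper always keeps the spine inside a single part $W_j$, which Lemma \ref{reglem} guarantees is self-regular.

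The paper never looks for a near-monochromatic set and its case split is different from yours: it distinguishes whether fewer than or at least $\varepsilon m$ vertices of $G'$ have degree below $(1-2^{1-k}-2c'-2\eta)m'$. If fewer, Tur\'an already yields a $K_k$ in $G'$ (so your ``no $K_k$'' case automatically falls into the ``many low-degree parts'' case). If at least $\varepsilon m$ parts are low-degree, each such $W_j$ contains $\kappa N^k$ red $K_k$, each with about $(1-2k\delta)(2^{-k}+c')|W|$ red extensions into its red-dense union $U\subseteq W$, and --- this is the step your proposal is missing --- the interface with the $A_i$ is handled by averaging over a uniformly random bad part: the union $T$ of the bad parts satisfies $|T|\ge \varepsilon|W|\ge \varepsilon^2 N$, so the $(\tfrac12,\varepsilon^2)$-regularity of $(A_i,V)$ gives $\E_j[d_B(W_j,A_i)]=\tfrac12\pm\varepsilon^2$ for a random bad $j$; Jensen and Corollary \ref{cor:randomclique} then show a random red $K_k$ in a random bad part has at least $(2^{-k}+c)N$ expected red extensions, an averaging step fixes a single $j$, and Lemma \ref{lem:markov-consequence} finishes. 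This sidesteps entirely the structural extraction on which your second case relies; as written, that case is a genuine gap.
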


\begin{proof}
	The structure of this proof is very similar to the structure of Section \ref{sec:simplifiedproof}, where we proved Theorem \ref{thm:conlon}. Let $\varepsilon_1=\varepsilon^2$, $\varepsilon_0=\varepsilon_1^2/2$, and $\delta_0=\delta_0(\varepsilon_0)=\Omega_k(\varepsilon^8)$ be the parameter from Lemma \ref{stablexilemma}. Next, fix $\delta=\delta_0 \varepsilon_0/4$, $\eta= 2^{-2k^2}\varepsilon^2 \delta^{2k^2}$, $c=2^{-k}k \varepsilon^2$, and $c'=4 \varepsilon$ to be other parameters depending on $\varepsilon$ and $k$. We apply Lemma \ref{reglem} to the subgraph induced on $W$, with parameters $\eta$ and $M_0=1/\eta$, to obtain an equitable partition $W=W_1 \sqcup \dotsb \sqcup W_m$, where $M_0 \leq m \leq M=M(\eta,M_0)$. Without loss of generality, we may assume that the parts $W_1,\ldots,W_{m'}$ have internal red density at least $\frac 12$, where $m' \geq m/2$. We build a reduced graph $G$ with vertex set $w_1,\ldots,w_m$, by making $\{w_{j_1}, w_{j_2}\}$ an edge if $(W_{j_1},W_{j_2})$ is $\eta$-regular and $d_B(W_{j_1},W_{j_2}) \geq \delta$. We also set $G'$ to be the subgraph of $G$ induced by $w_1,\ldots,w_{m'}$. 

	We will now show that $G'$ is quite dense and, in fact, that it has few vertices with low degree. Concretely, suppose first that $w_1$ has fewer than $(1-2^{1-k}-2c'-2 \eta)m'$ neighbors in $G'$. Since $w_1$ has at most $\eta m \leq 2 \eta m'$ non-neighbors coming from irregular pairs, this means that there are at least $(2^{1-k}+2c')m'$ parts $W_j$ with $2 \leq j \leq m'$ such that $(W_1,W_j)$ is $\eta$-regular and $d_R(W_1,W_j) \geq 1-\delta$. Let $J$ be the set of these indices $j$ and set $U=\bigcup_{j \in J} W_j$.
	By the counting lemma, Lemma \ref{countinglemma}, $W_1$ contains at least  $\left(2^{-\binom k2}- \eta  \binom k2\right)|W_1|^k$ red copies of $K_k$ and
	\[
		\left(2^{-\binom k2}-\eta  \binom k2\right)|W_1|^k \geq 2^{-k^2} \left( \frac{|W|}{M} \right) ^k \geq \left(\frac{ \varepsilon N}{2^k M}\right)^k,
	\]
	where we used that $\eta\leq \delta^{2k^2} \leq \delta^{\binom k2}/\binom k2$ and $2^{-\binom k2}-\delta^{\binom k2} >2^{-k^2}$, 
	along with our assumption that $|W| \geq \varepsilon N$. If we set $\kappa=( \varepsilon/2^k M)^k$, then this implies that $W_1$ contains at least $\kappa N^k$ red $K_k$. Moreover, $d_R(W_1)^{\binom k2} \geq 2^{-\binom k2} \geq \delta^k/4$. We pick a random such red $K_k$ and apply Corollary \ref{cor:randomclique} with parameters $\eta$ and $\delta^k/4$, which we may do since $\eta \leq (\delta^k/4)^3/k^2$. Then Corollary \ref{cor:randomclique} implies that the expected number of red extensions of this random clique inside $U$ is at least
	\begin{align*}
		\sum_{u \in U} \left(d_R(u,W_1)^k-4 \left(\frac {\delta^k} 4\right)\right) \geq \left( (1- \delta)^k- \delta^k \right) |U| \geq (1-2k \delta) |U|,
	\end{align*}
	where we first used Jensen's inequality applied to the convex function $x \mapsto x^k$ to lower bound $\sum_u d_R(u,W_1)^k$ by $(1- \delta)^k|U|$ and then used that $(1- \delta)^k \geq 1-k \delta$ and $\delta^k \leq k \delta$. Since we assumed that $J$ was large, and since the partition is equitable, we find that
	\[
		|U| \geq (2^{1-k}+2c')m' |W_j|\geq (2^{-k}+c')|W|.
	\]
	Thus, a random red $K_k$ inside $W_1$ has, on average, at least $(1-2k \delta)(2^{-k}+c')|W|$ red extensions in $W$. 

	Now suppose that instead of just $w_1$ having low degree in $G'$, we have a set of at least $\varepsilon m$ vertices $w_j \in V(G')$, each with fewer than $(1-2^{1-k}-2c'-2 \eta)m'$ neighbors in $G'$. Let $S$ be the set of these $j$ and $T=\bigcup_{j \in S} W_j$. By the above argument, for every $j\in S$, we have that $W_j$ contains at least $\kappa N^k$ red $K_k$, each with at least $(1-2k \delta)(2^{-k}+c')|W|$ red extensions on average. Moreover, we have that 
	\[
		|T|=|S||W_j| \geq (\varepsilon m) \frac{|W|}{m}=\varepsilon |W| \geq \varepsilon^2 |V|.
	\]
	So we may apply the $(\frac12,\varepsilon^2)$ regularity of $(A_i, V)$ to conclude that $d_B(T,A_i)=\frac 12 \pm \varepsilon^2$ for all $i$. Thus, if we pick $j \in S$ randomly, then $\E[d_B(W_j,A_i)]=\frac 12 \pm \varepsilon^2$. Therefore, if we first sample $j \in S$ randomly and then pick a random red $K_k$ inside $W_j$, then Corollary \ref{cor:randomclique} implies that this random red $K_k$ will have in expectation at least 
	\begin{align*}
		\sum_{a \in A_i} \left( d_R(a,W_j)^k-4 \left( \frac {\delta^k}4 \right)  \right) \geq \left( \left( \frac 12- \varepsilon^2 \right) ^k- \delta^k \right) |A_i| \geq 2^{-k}(1- 3k \varepsilon^2)|A_i|
	\end{align*}
	red extensions into $A_i$, again by Jensen's inequality. This implies that this random $K_k$ has in expectation at least $(1 -3k \varepsilon^2) 2^{-k}|A_1 \cup \dotsb \cup A_\ell|$ extensions into $A_1 \cup \dotsb \cup A_\ell$. Adding up the extensions into this set and into $W$, its complement, shows that this random red $K_k$ has in expectation at least $\xi N$ red extensions, where $\xi$ is a weighted average of $(1-3k \varepsilon^2)2^{-k}$ and $(1-2k \delta)(2^{-k}+c')$ with the latter quantity receiving weight at least $\varepsilon$, since $|W| \geq \varepsilon N$. Thus,
	\begin{align*}
		\xi &\geq (1- \varepsilon)(1-3k \varepsilon^2) 2^{-k}+\varepsilon(1-2k \delta)(2^{-k}+c')
		\\
		&\geq (1-3k \varepsilon^2- \varepsilon)2^{-k}+\varepsilon(1-2k \delta)(1+2^k c')2^{-k}.
	\end{align*}
	We claim that $2k \delta < 2^k c'/(2(1+2^k c'))$. Indeed, if $2^k c' \geq 1$, then this follows from $2k \delta < 2k \varepsilon < \frac 14$, whereas if $2^k c'<1$, then this follows from $2k \delta < 2^{k} \varepsilon = 2^k c'/4< 2^k c'/(2(1+2^k c'))$. Therefore,
	\[
	    (1- 2k \delta)(1+2^k c') > \left(1 - \frac{2^k c'}{2(1+2^k c')}\right)(1+2^k c') = 1+2^{k-1}c'.
	\]
    Continuing the above computation, we thus have that
    \begin{align*}
        \xi &\geq (1-3k \varepsilon^2- \varepsilon)2^{-k}+\varepsilon(1-2k \delta)(1+2^k c')2^{-k}\\
        &\geq 2^{-k}(1 - 3k \varepsilon^2 +2^{k-1} c' \varepsilon)\\
        &\geq 2^{-k} (1- 3k \varepsilon^2 + 2^{k+1} \varepsilon^2)\\
        &\geq 2^{-k} + c,
    \end{align*}
	where in the last step we used that $2^{k+1} \geq 4k$ for $k \geq 2$, as well as the definition of $c=2^{-k} k \varepsilon^2$.
	
	Recall that we picked this random red $K_k$ by first uniformly sampling a random $j \in S$ and then picking a uniformly random red $K_k$ in $W_j$. By the above, this random red $K_k$ has at least $(2^{-k}+c)N$ red extensions in expectation. Therefore, there must exist some $j \in S$ such that a random red $K_k$ in $W_j$ has at least $(2^{-k}+c)N$ red extensions in expectation. If we let $\Q$ denote the set of red $K_k$ in $W_j$, then $|\Q| \geq \kappa N^k$ by our earlier computation.
	Therefore, by Lemma \ref{lem:markov-consequence}, we can find at least $c_1 N^k$ red $K_k$s, each with at least $(2^{-k}+c_1)N$ red extensions, for some $c_1<c$ depending 
	only on $\varepsilon$ and $k$. 

	Hence, we may assume that at most $\varepsilon m \leq 2 \varepsilon m'$ vertices of $G'$ have degree less than $(1-2^{1-k}-2c'-2 \eta)m'$. Therefore, the average degree in $G'$ is at least
	\[
		(1-2^{1-k}-2c'-2 \eta)(1-2 \varepsilon) \geq 1-2^{1-k}-2 c'-2 \eta-2 \varepsilon \geq 1-2^{1-k}-12 \varepsilon,
	\]
	by our choice of $c'=4 \varepsilon$ and $\eta< \varepsilon$. Since $\varepsilon < \frac{1}{25k}$, we have that $12 \varepsilon < \frac 1{2k}$. Therefore, $1-2^{1-k}-12 \varepsilon >1-1/(k-1)$. Thus, by Tur\'an's theorem, $G'$ contains a $K_k$. Let $w_{j_1},\ldots,w_{j_k}$ be the vertices of this $K_k$ in $G'$. We claim that $(C_1,\ldots,C_k)=(W_{j_1},\ldots,W_{j_k})$ is a $(k,\eta,\delta)$-good configuration in our original coloring of $K_N$. Indeed, by our definition of the $W_j$, we know that each of them is an $\eta$-regular set with red density at least $\frac 12\geq \delta$ and, by the definition of the reduced graph $G$, we know that if $\{w_j,w_{j'}\}$ is an edge of $G$, then $(W_j,W_{j'})$ is an $\eta$-regular pair with $d_B(W_j,W_{j'}) \geq \delta$. 
	
	Moreover, we know that $|C_j|\geq \alpha N$, where $\alpha=\varepsilon/M$ and $M$ depends only on $\eta$ and, thus, only on $\varepsilon$ and $k$. Therefore, by Lemma \ref{lem:regularityofgoodconfig}, we know that either our coloring contains at least $c_1 N^k$ monochromatic $K_k$, each with at least $(2^{-k}+c_1) N$ monochromatic extensions, or $(C_j,V)$ is $(\frac 12, \varepsilon^2)$-regular for all $j$, where $c_1$ again depends only on $\varepsilon$ and $k$. In particular, we may set $A_{\ell+1}=C_1$ (or any other $C_j$) and obtain the desired conclusion.
\end{proof}

The previous lemma shows that if we assume our coloring does not contain $c_1 N^k$ monochromatic $K_k$, each with at least $(2^{-k}+c_1) N$ monochromatic extensions, then we may inductively pull out subsets that are $(\frac 12, \varepsilon^2)$-regular with the whole vertex set and we may keep doing so until the remainder of the graph becomes too small (namely, until it contains only an $\varepsilon$-fraction of the vertices). At the end of this process, we will have almost partitioned our vertex set into parts which are not necessarily $\varepsilon$-regular with each other, but which satisfy a more global regularity condition (for comparison, this notion is a bit stronger than the Frieze--Kannan notion of weak regularity \cite{FrKa96,FrKa99}). Our final technical lemma shows that this global regularity is enough to conclude that our coloring is $\theta$-quasirandom.

\begin{lem}\label{lem:partition-implies-quasirandomness}
	Let $\varepsilon\leq \theta/2$. Suppose there is a partition
	\[
		V(K_N)=A_1 \sqcup \dotsb \sqcup A_\ell \sqcup A_{\ell+1},
	\]
	where, for each $i\leq \ell$, $(A_i,V)$ is $(\frac 12,\varepsilon)$-regular and $|A_{\ell+1}| \leq \varepsilon N$. Then the coloring is $\theta$-quasirandom.
\end{lem}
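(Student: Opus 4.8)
The plan is to verify the defining inequality of $\theta$-quasirandomness directly for an arbitrary pair of disjoint sets $X,Y \subseteq V(K_N)$, using the given partition to break the blue edge count $e_B(X,Y)$ into pieces that are each governed either by the $(\tfrac12,\varepsilon)$-regularity of one of the pairs $(A_i,V)$ or by a trivial bound.

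First I would dispose of a degenerate range. If $|Y| < \varepsilon N$, then $0 \le e_B(X,Y) \le |X||Y|$ forces $\bigl|e_B(X,Y) - \tfrac12|X||Y|\bigr| \le \tfrac12|X||Y| \le \tfrac12\varepsilon N^2 \le \theta N^2$, so there is nothing to prove; hence I may assume $|Y| \ge \varepsilon N$. (No separate smallness hypothesis on $|X|$ is needed.)

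Next, set $X_i := X \cap A_i$ for $1 \le i \le \ell+1$, so that $X = X_1 \sqcup \dotsb \sqcup X_{\ell+1}$ and $e_B(X,Y) = \sum_i e_B(X_i,Y)$, whence
\[
    e_B(X,Y) - \tfrac12|X||Y| = \sum_{i=1}^{\ell+1}\Bigl(e_B(X_i,Y) - \tfrac12|X_i||Y|\Bigr).
\]
For an index $i \le \ell$ with $|X_i| \ge \varepsilon|A_i|$, the pair $X_i \subseteq A_i$, $Y \subseteq V$ (recall $|Y| \ge \varepsilon N = \varepsilon|V|$) lies in the scope of $(\tfrac12,\varepsilon)$-regularity of $(A_i,V)$, so $\bigl|d_B(X_i,Y) - \tfrac12\bigr| \le \varepsilon$, i.e.\ $\bigl|e_B(X_i,Y) - \tfrac12|X_i||Y|\bigr| \le \varepsilon|X_i||Y|$. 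For every remaining index --- namely $i = \ell+1$ together with those $i \le \ell$ for which $|X_i| < \varepsilon|A_i|$ --- I use the trivial bound $\bigl|e_B(X_i,Y) - \tfrac12|X_i||Y|\bigr| \le \tfrac12|X_i||Y|$, valid since $e_B(X_i,Y) \in [0,|X_i||Y|]$. Summing the terms of the first type contributes at most $\varepsilon|Y|\sum_{i}|X_i| \le \varepsilon N^2$, while summing those of the second type contributes at most $\tfrac12|Y|\bigl(|A_{\ell+1}| + \sum_{i\le\ell}\varepsilon|A_i|\bigr) \le \tfrac12 N(\varepsilon N + \varepsilon N) = \varepsilon N^2$, using $|A_{\ell+1}| \le \varepsilon N$ and $\sum_{i\le\ell}|A_i| \le N$. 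Altogether $\bigl|e_B(X,Y) - \tfrac12|X||Y|\bigr| \le 2\varepsilon N^2 \le \theta N^2$, since $\varepsilon \le \theta/2$, which is exactly the required bound.

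There is no substantive obstacle here; the two points that require care are that $(\tfrac12,\varepsilon)$-regularity of $(A_i,V)$ may only be invoked when the small-side test set $X_i$ is not too small (which forces the case split on $|X_i|$ versus $\varepsilon|A_i|$), and that the accumulated error coming from the many ``leftover'' pieces must be controlled via $|A_{\ell+1}| \le \varepsilon N$ and $\sum_{i\le\ell}|A_i| \le N$ so that it does not grow with $\ell$. The constants are tight, and the hypothesis $\varepsilon \le \theta/2$ is precisely the budget that makes the two $\varepsilon N^2$ contributions add up to at most $\theta N^2$.
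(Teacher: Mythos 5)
Your proof is correct and is essentially identical to the paper's: the same initial case split on $|Y|\ge \varepsilon N$, the same decomposition $X_i = X\cap A_i$ with the dichotomy $|X_i|\ge \varepsilon|A_i|$ versus not, the regularity bound on the large pieces and the trivial bound on the small ones plus $A_{\ell+1}$, summing to $2\varepsilon N^2\le\theta N^2$. Nothing further is needed.
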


\begin{proof}
	Fix disjoint $X,Y \subseteq V(K_N)$. We need to check that
	\[
		\left|e_B(X,Y)- \frac 12 |X||Y| \right| \leq \theta N^2.
	\]
	First, observe that if $|Y| \leq \varepsilon N$, then 
	\[
		\left|e_B(X,Y)- \frac 12 |X||Y| \right| \leq \frac 12 |X| |Y| \leq \frac \varepsilon 2 N^2 \leq \theta N^2.
	\]
	So, from now on, we may assume that $|Y| \geq \varepsilon N$. For $1 \leq i \leq \ell+1$, let $X_i=A_i \cap X$ and define $I_X=\{1 \leq i \leq \ell:|X_i| \geq \varepsilon |A_i|\}$. Then  
	\[
		\sum_{i \notin I_X} |X _i| \leq |A_{\ell+1}|+ \varepsilon \sum_{i=1}^\ell |A_i| \leq 2\varepsilon N.
	\]
	We now write
	\[
		e_B(X,Y)-\frac 12 |X||Y|=\sum_{i=1}^{\ell+1} \left( e_B(X_i,Y)-\frac 12 |X_i||Y| \right) .
	\]
	We split this sum into two parts, depending on whether $i \in I_X$ or not. First, suppose that $i \in I_X$. Then $|X_i| \geq \varepsilon |A_i|$ and $|Y| \geq \varepsilon |V|$ by our assumption that $|Y| \geq \varepsilon N$, so we may apply the $(\frac 12, \varepsilon)$-regularity of $(A_i,V)$ to conclude that
	\[
		\sum_{i \in I_X}\left|e_B(X_i,Y)-\frac 12 |X_i||Y|\right|=\sum_{i \in I_X}\left|d_B(X_i,Y)-\frac 12\right| |X_i||Y| \leq \sum_{i \in I_X}\varepsilon |X_i||Y| \leq \varepsilon |X||Y| \leq \varepsilon N^2.
	\]
	On the other hand, we know by our earlier discussions that
	\[
		\sum_{i \notin I_X} \left| e_B(X_i,Y)- \frac 12 |X_i||Y| \right| \leq \frac 12 |Y|\sum_{i \notin I_X} |X_i| \leq \frac 12 |Y| (2 \varepsilon N) \leq \varepsilon N^2.
	\]
	Adding these up, we conclude that
	\[
		\left| e_B(X,Y) -\frac 12 |X||Y|\right| \leq 2 \varepsilon N^2 \leq \theta N^2,
	\]
	as desired. 
\end{proof}

\begin{rem}
	The output of Lemma \ref{lem:pull-out-regular} is a collection of sets, each of which is $(\frac 12,\varepsilon^2)$-regular to $V$. However, all that Lemma \ref{lem:partition-implies-quasirandomness} requires is $(\frac 12, \varepsilon)$-regularity, which is substantially weaker. The reason for the discrepancy is that Lemma \ref{lem:pull-out-regular} requires a quadratic dependence between the level of regularity and the size of the remainder set $W$.
\end{rem}

With this collection of technical lemmas, we can finally prove the main result of this section.

\begin{proof}[Proof of Theorem \ref{thm:strongquasirandom} for $k \geq 3$]\label{proof-of-thm:strongquasirandom}
	Fix $k \geq 3$ and $0<\theta<\frac 12$, let $\varepsilon=\min\{\theta/2,1/25k\}$, and fix a $2$-coloring of $K_N$. Let $c_1>0$ be the constant from Lemma \ref{lem:pull-out-regular}, depending only on $\varepsilon$ and $k$ and, thus, only on $\theta$ and $k$. If our coloring does not contain at least $c_1 N^k$ monochromatic $K_k$, each with at least $(2^{-k}+c_1)N$ monochromatic extensions, then we may repeatedly apply Lemma \ref{lem:pull-out-regular}. At each step, we find a new set $A_i \subseteq V$ such that $(A_i,V)$ is $(\frac 12, \varepsilon^2)$-regular, as long as the remainder of the graph has cardinality at least $\varepsilon N$. When this is no longer the case, we stop the iteration and apply Lemma \ref{lem:partition-implies-quasirandomness} with $A_1,\ldots,A_\ell$ the sets we pulled out using Lemma \ref{lem:pull-out-regular} and $A_{\ell+1}$ the remainder set of cardinality at most $\varepsilon N$. 
	This implies that the coloring is $\theta$-quasirandom, as desired. 
\end{proof}

\begin{rem}
	The dependence between $c_1$ and $\theta$ in this proof is of tower type, because, in the proof of Lemma \ref{lem:pull-out-regular}, we assumed that $N$ was sufficiently large to apply Lemma \ref{reglem}, which gives a tower-type bound.
	In principle, it should also be possible to obtain a proof
	of Theorem \ref{thm:strongquasirandom} avoiding Lemma \ref{reglem} and only using Lemma \ref{lem:partition}, as in the proof of Theorem \ref{thm:tripleexp} in Section \ref{sec:basicresult}. Doing so would likely give a tighter dependence between $\theta$ and $c_1$ in Theorem \ref{thm:strongquasirandom}, since Lemma \ref{lem:partition} never invokes tower-type dependencies. However, we chose not to pursue this further, because the proof of Theorem \ref{thm:tripleexp} is already substantially more involved than that of Theorem \ref{thm:conlon} (for instance, we have to split into two cases depending on the number of blue $K_k$) and obtaining a stability version would inevitably add further complications.
\end{rem}

\subsection{The \texorpdfstring{$k=2$}{k=2} case}\label{subsec:proof-of-k2-case}

As mentioned previously, our proof of Theorem \ref{thm:strongquasirandom} fails for $k=2$, because Lemma \ref{stablexilemma} is false in that case. However, even though our proof fails, the result is still true, since the following simple variant of the argument in \cite{NiRoSc} applies.

\begin{proof}[Proof of Theorem \ref{thm:strongquasirandom} for $k=2$]
	Consider a red/blue coloring of the edges of $K_N$ in which there are at most $\varepsilon \binom N 2$ edges that are in at least $(\frac{1}{4}+\varepsilon)(N-2)$ monochromatic triangles, where $\varepsilon>1/N$. We may suppose without loss of generality that the red edge density is at least $1/2$. Let $\codeg_R(u,v)$ and $\codeg_B(u,v)$ denote the number of common red and blue neighbors, respectively, of the vertices $u$ and $v$, and let $$S:=\sum_{(u,v)~\textrm{red}} \left(\codeg_R(u,v)-\frac{N-2}{4}\right)_+ + \sum_{(u,v)~\textrm{blue}} \left(\codeg_B(u,v)-\frac{N-2}{4}\right)_+,$$
	where the nonnegative part $x_+$ is given by $x_+=x$ if $x \geq 0$ and $x_+=0$ if $x<0$. 
	The number $S$ is a measure of the monochromatic book excess over the random bound. Since the excess is at most $\varepsilon(N-2)$ for all but $\varepsilon\binom {N}{2}$ edges (whose excess is at most $\frac{3}{4}(N-2)$), we obtain 
	\begin{equation}\label{equation1} S \leq \varepsilon\binom{N} {2}\cdot \frac{3}{4}(N-2)+(1-\varepsilon)\binom{N} {2}\cdot \varepsilon (N-2) < 6\varepsilon \binom{N} {3}.
	\end{equation}
	Let $M$ denote the number of monochromatic triangles in the coloring. Observe that 
	\begin{equation}\label{equation2}
	\frac S3 \geq M-\frac{1}{4}\binom {N} {3},
	\end{equation} 
	as the right-hand side is just one third of the sum that defines $S$ taken without nonnegative parts.  Goodman's formula \cite{Goodman} for the number of monochromatic triangles in a coloring is 
	\[
		M= \frac 12 \left( \sum_{v \in V}\left(\binom{\deg_B(v)}2+\binom{\deg_R(v)}2\right)-\binom N3 \right). 
	\]
	This identity is equivalent to $$M-\frac{1}{4}\binom{N} {3} = -\frac{1}{4}\binom{N} {2}+\frac{1}{2}\sum_{v \in V} \left(\deg_R(v)-\frac{N-1}{2}\right)^2.$$
	From this identity, the inequalities (\ref{equation1}) and  (\ref{equation2}),
	 and the Cauchy--Schwarz inequality, we obtain 
	$$\sum_{v \in V} \left|\deg_R(v)-\frac{N-1}{2}\right|\leq \varepsilon^{1/2} N^2,$$
	where we used the bound $\varepsilon>1/N$.

	By the inclusion-exclusion principle, we have
	 $$\codeg_B(u,v) \geq N-2-\deg_R(u)-\deg_R(v)+\codeg_R(u,v),$$ from which it follows that
	$$\codeg_B(u,v)-\frac{N-2}{4} \geq \codeg_R(u,v)-\frac{N-2}{4} - \left|\deg_R(u)-\frac{N-1}{2}\right|-\left|\deg_R(v)-\frac{N-1}{2}\right|-1.$$
	Summing over all pairs $(u,v)$, we get 
	$$ S \geq \sum_{(u,v)} \left(\codeg_R(u,v)-\frac{N-2}{4}\right)_+ - (N-1)\sum_{u}\left|\deg_R(u)-\frac{N-1}{2}\right|-\binom{N} {2}.$$
	Since the red density is at least $1/2$, convexity implies that the average value of $\codeg_R(u,v)$ is at least $\frac{N-3}{4}$, so
	$$ \sum_{(u,v)}\left(\codeg_R(u,v)-\frac{N-3}{4}\right)_+ \geq \sum_{(u,v)} \frac{1}{2}\left|\codeg_R(u,v)-\frac{N-3}{4}\right|.$$
	Moreover,
	\[
	    \sum_{(u,v)} \left(\codeg_R(u,v)-\frac{N-2}{4}\right)_+-\sum_{(u,v)}\left(\codeg_R(u,v)-\frac{N-3}{4}\right)_+\geq \sum_{(u,v)} -\frac 14\geq -\frac{N^2}8.
	\]
	Putting all this together, we get 
	$$\sum_{(u,v)} \left|\codeg_R(u,v)-\frac{N-3}{4}\right| \leq \frac{N^2}4+ 2S+2(N-1)\varepsilon^{1/2}N^2+N(N-1) \leq \left(4\varepsilon+2\varepsilon^{1/2}\right)N^3.$$
	Chung, Graham, and Wilson \cite{ChGrWi} proved that if an $N$-vertex graph with density at least $1/2$ has average codegree $N/4+o(N)$, then it is $o(1)$-quasirandom. Since the red density is at least $1/2$, we therefore get that the coloring is $\theta$-quasirandom for some $\theta$ depending on $\varepsilon$. 
\end{proof}

\subsection{The converse result}

As mentioned in the introduction, a converse to Theorem \ref{thm:strongquasirandom} is also true, meaning that the condition in Theorem \ref{thm:strongquasirandom} is an equivalent characterization of quasirandomness (up to a change in parameters).

\begin{thm}\label{thm:quasirandomconverse}
	For any $k \geq 2$ and any $c_2 > 0$, there is some $\theta > 0$ such that if a $2$-coloring of the edges of $K_N$ is $\theta$-quasirandom, then the number of monochromatic $K_k$ with at least $(2^{-k}+c_2)N$ monochromatic extensions is at most $c_2 N^k$. 
\end{thm}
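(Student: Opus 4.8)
The plan is to show that in a sufficiently quasirandom coloring, essentially every monochromatic $K_k$ has close to $2^{-k}N$ extensions of each color, so that the number with an extra $c_2 N$ extensions is negligible. The natural tool is the counting lemma, but applied to neighborhoods rather than to the partition classes directly. Concretely, fix a monochromatic $K_k$, say a red one, on vertices $v_1,\dots,v_k$, and let $N_i$ be the red neighborhood of $v_i$; a vertex $u$ extends this $K_k$ to a red $K_{k+1}$ precisely when $u \in N_1 \cap \dots \cap N_k$. So the number of red extensions is $|N_1 \cap \dots \cap N_k|$, and we must bound the number of $K_k$ for which this is at least $(2^{-k}+c_2)N$, together with the analogous blue statement. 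Quasirandomness should force $d_R(v,Y) = \tfrac12 \pm o(1)$ for all but a few vertices $v$ and all reasonably large $Y$, and more generally should pin down the size of iterated neighborhood intersections.

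The key steps, in order, are as follows. First, from $\theta$-quasirandomness, deduce the standard consequence that all but $O(\sqrt\theta)N$ vertices $v$ have red degree $(\tfrac12 \pm O(\theta^{1/4}))N$ (by a Cauchy--Schwarz / second-moment argument, or by applying the defining inequality with $X=\{v\}$ against a carefully chosen $Y$); call these vertices \emph{typical}. Second, prove by induction on $j$ that for a quasirandom coloring and most choices of $v_1,\dots,v_j$, the set $N_1 \cap \dots \cap N_j$ (intersection of red neighborhoods, say) has size $(2^{-j} \pm \epsilon_j)N$, where $\epsilon_j \to 0$ as $\theta \to 0$; the inductive step applies the defining quasirandomness inequality with $X = \{v_{j+1}\}$ and $Y = N_1 \cap \dots \cap N_j$, using that $Y$ is large (of size $\approx 2^{-j}N \geq 2^{-k}N$, which is a constant fraction of $N$) so that the error $\theta N^2$ is a genuinely small fraction of $|X||Y| \cdot N$-scale quantities after dividing. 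Third, since the number of ordered $K_k$-tuples (monochromatic or not) with all coordinates typical and with every initial-segment neighborhood intersection of the ``correct'' size is $(1-o(1))N^k$, and each monochromatic $K_k$ is counted $k!$ times, we conclude that all but $o(N^k)$ monochromatic $K_k$ have both their red-extension count and their blue-extension count within $o(N)$ of $2^{-k}N$. Choosing $\theta$ small enough that this $o(N^k)$ is below $c_2 N^k$ and the $o(N)$ error is below $c_2 N$ completes the argument. One should handle the two colors and the possibility that a $K_k$ is (say) red but we are counting blue extensions (then the relevant intersection is of blue neighborhoods of red-clique vertices, which is still governed by the same neighborhood-intersection estimate since quasirandomness is color-symmetric), as well as the minor point that $u$ ranges over $V \setminus \{v_1,\dots,v_k\}$, an $O(k)$ correction absorbed into the error.

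I expect the main obstacle to be the inductive neighborhood-intersection estimate in the second step: one must be careful that the error terms $\epsilon_j$ do not blow up as $j$ grows, and that at each stage the set $N_1 \cap \dots \cap N_j$ remains large enough (bounded below by a positive constant times $N$, which holds because we only ever care about $j \leq k$ and the intersection stays near $2^{-j}N$) for the quasirandomness inequality to give a useful bound; this requires tracking the ``bad'' set of tuples where the estimate fails and showing it has density $o(1)$ at each step, so that the union over the $\le k$ steps is still $o(1)$. The rest is a routine averaging and union-bound bookkeeping argument, and no regularity lemma or counting lemma beyond elementary manipulations of the definition of quasirandomness is needed.
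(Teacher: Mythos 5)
Your route is genuinely different from the paper's and is viable. The paper proves the converse by a variance identity: writing $E=\sum_Q(\ext(Q)-2^{-k}N)^2$ over monochromatic $K_k$'s $Q$, it expresses $E$ exactly in terms of the monochromatic counts of $K_k$, $K_{k+1}$ and $K_{k+2}-e$, imports the Chung--Graham--Wilson fact that a $\theta$-quasirandom coloring has the correct count of each of these, and deduces $E\leq 2k\delta N^{k+2}$, which is incompatible with $c_2N^k$ cliques each having excess $c_2N$. You instead control $\ext(Q)$ pointwise for almost every $k$-tuple by iterating the discrepancy condition on neighborhood intersections; this avoids quoting the subgraph-counting equivalences (you are in effect re-deriving the relevant codegree-type consequence of discrepancy by hand), at the cost of a somewhat longer bad-set bookkeeping argument, while the paper's proof is shorter given the counting black box. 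Two simplifications to your plan: you never need the $(1-o(1))N^k$ count of good tuples or the $k!$ accounting, since an upper bound of $o(N^k)\leq c_2N^k$ on bad tuples already bounds the bad monochromatic cliques; and for $k\geq 2$ a blue extension of a red $K_k$ never yields a monochromatic $K_{k+1}$, so only the same-color neighborhood intersection matters.

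One step, however, is wrong as literally stated and must be repaired. Applying the defining inequality with $X=\{v_{j+1}\}$ and $Y=N_1\cap\dots\cap N_j$ gives nothing: the left side $|e_B(X,Y)-\tfrac12|X||Y||$ is at most $N$, while the allowed error is $\theta N^2$, so the inequality is vacuous for singleton (indeed for any $o(N)$-sized) $X$, no matter how large $Y$ is. The correct implementation of your own ``bad set'' idea is to aggregate before applying discrepancy: for a fixed good $j$-tuple with $Y=N_1\cap\dots\cap N_j$, let $B^+$ be the set of all vertices $v$ with $e_R(v,Y)>\tfrac12|Y|+\gamma N$; then $e_R(B^+,Y)\geq|B^+|(\tfrac12|Y|+\gamma N)$, while discrepancy gives $e_R(B^+,Y)\leq\tfrac12|B^+||Y|+O(\theta N^2+N)$, so $|B^+|=O((\theta/\gamma)N)$, and similarly for $B^-$. (Here you also need the standard extension of the paper's disjoint-sets condition to possibly overlapping sets, obtained by splitting into disjoint pieces and randomly halving the overlap, which costs only a constant factor in $\theta$ plus $O(N)$.) With this fix the induction closes: the error satisfies roughly $\varepsilon_{j+1}\leq\varepsilon_j/2+\gamma$ and the bad-tuple density grows by $O(\theta/\gamma)$ per step, so choosing $\gamma$ and then $\theta$ small in terms of $c_2$ and $k$ completes your argument. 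The same aggregation trick is what makes your first step (degree concentration) work; the singleton-$X$ variant you mention there fails for the same reason.
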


\begin{proof}
	We will use the standard result of Chung, Graham, and Wilson that a quasirandom coloring contains roughly the correct count of any fixed monochromatic subgraph. Specifically, for every $\delta>0$, there is some $\theta>0$ such that, in any $\theta$-quasirandom coloring, 
	\begin{align*}
		M(K_k)&=\#(\text{monochromatic }K_k)=2^{1-\binom k2} \binom Nk \pm \delta N^k,\\
		M(K_{k+1})&=\#(\text{monochromatic }K_{k+1})=2^{1-\binom{k+1}2} \binom N{k+1} \pm \delta N^{k+1},\\
		M(K_{k+2}-e)&=\#(\text{monochromatic }K_{k+2}-e)=2^{2-\binom{k+2}2}\binom N{k+2} \binom{k+2}2 \pm \delta N^{k+2},
	\end{align*}
	where $K_{k+2}-e$ is the graph formed by deleting one edge from $K_{k+2}$. Note that for this latter count we have an extra factor of $\binom {k+2}2$, which accounts for the fact that the graph is not vertex transitive. On the other hand, we observe that every monochromatic copy of $K_{k+2}-e$ corresponds to two distinct extensions of a single monochromatic $K_k$ to a monochromatic $K_{k+1}$. Therefore,
	\[
		M(K_{k+2}-e)=\sum_{Q} \binom{\#(\text{monochromatic extensions of }Q)}2,
	\]
	where the sum is over all monochromatic $K_k$. Let $\ext(Q)$ denote the number of monochromatic extensions of $Q$. Then we also have that $\sum_Q \ext(Q)$ counts the total number of ways of extending a monochromatic $K_k$ to a monochromatic $K_{k+1}$, which is precisely $(k+1)M(K_{k+1})$, since each monochromatic $K_{k+1}$ contributes exactly $k+1$ terms to the sum.
	
	Note that we may assume that $N$ is sufficiently large by replacing $c_2$ by a smaller value. Concretely, we assume henceforth that $N\geq \max\{k^2+k, 1/\delta\}$. We now consider the quantity 
	\[
		E=\sum_{Q}
		(\ext(Q)-2^{-k}N)^2.
	\]
	On the one hand, we have that
	\begin{align*}
		E&=\sum_Q \ext(Q)^2 -2^{1-k}N \sum_Q \ext(Q)+\sum_Q 2^{-2k} N^2\\
		&=\left(2 \sum_Q \binom{\ext(Q)}2+\sum_Q \ext(Q) \right)-2^{1-k} N \sum_Q \ext(Q)+2^{-2k} N^2 M(K_k)\\
		&=2 M(K_{k+2}-e)+(1-2^{1-k}N)(k+1)M(K_{k+1})+2^{-2k}N^2 M(K_k)\\
		&\leq 2^{3-\binom{k+2}2} \binom N{k+2} \binom{k+2}2-2^{2-k-\binom{k+1}2}N(k+1) \binom N{k+1}+2^{1-2k-\binom k2} N^2 \binom Nk +2k \delta N^{k+2}\\
		&=2^{2-\binom{k+2}2}\binom Nk (-N +k^2+k)+2k\delta N^{k+2} \\
		&\leq 2k \delta N^{k+2}.
	\end{align*}
	On the other hand, suppose there were at least $c_2 N^k$ monochromatic $K_k$ with at least $(2^{-k}+c_2)N$ monochromatic extensions. Then, by only keeping these cliques in the sum defining $E$, we would have that
	\begin{align*}
		E&=\sum_Q (\ext(Q)-2^{-k}N)^2 \geq c_2 N^k (c_2 N)^2=c_2^3 N^{k+2}.
	\end{align*}
	Therefore, if $\theta$ is small enough that $\delta< c_2^3/(2k)$, we have a contradiction.
\end{proof}

\section{Conclusion}

The outstanding open problem that remains is Thomason's conjecture~\cite{Thomason82}, that 
\[
	r(B_n \up k) \leq 2^k(n+k-2)+2.
\]
There are, in fact, two problems here, the problem of proving this conjecture for all $n$ and $k$ and the problem of proving it when $k$ is fixed and $n$ is sufficiently large. The first of these problems seems bewilderingly hard, not least because it would immediately yield an exponential improvement for the classical Ramsey number $r(K_r)$, as outlined in the introduction. The second problem may be more approachable, but we have not found a way of leveraging our stability result, saying that near extremal colorings are quasirandom, to obtain this more precise statement. 

\paragraph{Acknowledgements.} We would like to thank Freddie Illingworth for pointing out an error in an earlier draft of this paper. We would also like to thank the anonymous referees for their careful reviews and helpful suggestions.

% \bibliographystyle{yuval}
% \bibliography{books}

\appendix

\section{Proofs of technical lemmas}\label{appendix}
In this appendix, we collect those proofs which did not fit neatly into the structure of the paper itself. We begin with the proof of Lemma \ref{lem:random-subgraph-reg}, which says that a random subset of a regular set is still regular, as long as the subset is not too small.
\begin{proof}[Proof of Lemma \ref{lem:random-subgraph-reg}]
	Say that a graph $G$ of density $d$ is \emph{$\varepsilon$-homogeneous} if
	\[
		\max_{S,T \subseteq V(G)}|e(S,T)-d|S||T|| \leq \varepsilon |V|^2.
	\]
	It is easy to check \cite[Exercise~9.6]{Lovasz} that $\varepsilon$-regularity implies $\varepsilon$-homogeneity, which in turn implies $\varepsilon^{1/3}$-regularity.

	Given two edge-weighted graphs $G$ and $H$ on the same set $V$ of vertices, their \emph{cut distance} is defined by
	\[
		d_\square(G,H)=\frac{1}{|V|^2}\max_{S,T \subseteq V} |e_G(S,T)-e_{H}(S,T)|,
	\]
	where $e_G$ and $e_{H}$ denote the total weight of edges between $S$ and $T$ in $G$ and $H$, respectively. Note that if $G$ has density $d$ and $H_d$ denotes the complete graph with loops on vertex set $V$ where every edge receives weight $d$, then $\varepsilon$-homogeneity is equivalent to the statement that $d_\square(G,H_d) \leq \varepsilon$. Finally, we will need the First Sampling Lemma \cite[Lemma 10.5]{Lovasz}, which says that if $U$ is chosen uniformly at random from $\binom Vt$, then, with probability at least $1-4 e^{-\sqrt t/10}$,
	\[
		\left|d_\square(G[U],H[U])-d_\square(G,H)\right| \leq \frac{8}{t^{1/4}}.
	\]
	Now suppose, as in the Lemma statement, that $G$ is $\varepsilon$-regular and let $d$ be its density. Then $G$ is $\varepsilon$-homogeneous, so $d_\square(G,H_d) \leq \varepsilon$. Let $U$ be chosen uniformly at random from $\binom Vt$ and let $d'$ denote the density of $G[U]$. Then we have that
	\begin{align}
		d_\square(G[U],H_{d'}[U])&=(d_\square(G[U],H_{d'}[U])-d_\square(G[U],H_d[U]))\notag\\
		&\hspace{1cm}+(d_\square(G[U],H_d[U])-d_\square(G,H_d))+d_\square(G,H_d)\label{eq:cutnorm-bound}
	\end{align}
	and we can bound each of these terms in turn. Since the triangle inequality holds for the cut distance, we know that
	\[
	|d_\square(G[U],H_{d'}[U])-d_\square(G[U],H_d[U])| \leq d_\square (H_d[U],H_{d'}[U]) = |d-d'|.
	\]
	To bound this, suppose we reveal the vertices in the random subset $U$ one at a time and let $Z_0,\ldots,Z_t$ be the martingale where $Z_i$ is the expected value of $e(G[U])$ conditioned on the first $i$ vertices revealed. Then $|Z_i-Z_{i-1}| \leq t-1$, since each new vertex can affect the total number of edges in $U$ by at most $t-1$, and $Z_0=\E[e(G[U])]=d\binom t2$. Therefore, by Azuma's inequality, we see that for any $\lambda>0$,
	\[
		\pr(|Z_t-Z_0| >\lambda) \leq 2e^{-\lambda^2/(2t(t-1)^2)}\leq 2 e^{-\lambda^2/(4t \binom t2)}.
	\]
	If we set $\lambda=\delta\binom t2$ for some $\delta>0$, then we see that
	\[
		\pr(|d-d'|>\delta) \leq 2 e^{-\delta^2 \binom t2/4t}=2e^{-\delta^2 (t-1)/8}.
	\]
	For the next term in (\ref{eq:cutnorm-bound}), we use the First Sampling Lemma, which implies that with probability at least $1-4e^{-\sqrt t/10}$,
	\[
		d_\square(G[U],H_d[U])-d_\square(G,H_d) \leq \frac{8}{t^{1/4}}.
	\]
	Finally, by our assumption that $G$ is $\varepsilon$-regular, we know that $d_\square(G,H_d) \leq \varepsilon$. Putting this all together, we see that for any $\delta>0$, with probability at least $1-4e^{-\sqrt t/10}-2e^{-\delta^2 (t-1)/8}$, we have that
	\[
		d_\square(G[U],H_{d'}[U]) \leq \delta+\frac{8}{t^{1/4}}+\varepsilon.
	\]
	Plugging in $\delta=t^{-1/4}$ and using our assumption that $t \geq \varepsilon^{-4}$, we find that
	\[
		d_\square(G[U],H_{d'}[U]) \leq \varepsilon+8 \varepsilon+\varepsilon=10 \varepsilon
	\]
	with probability at least $1-6 e^{-\sqrt t/10} \geq 1-6 e^{-\varepsilon^{-2}/10}$. By our assumption that $\varepsilon<1/5$, this quantity is strictly positive, so there exists some subset $U$ satisfying $d_\square(G[U],H_{d'}[U]) \leq 10 \varepsilon$. This means that $G[U]$ is $10 \varepsilon$-homogeneous, so it must also be $(10 \varepsilon)^{1/3}$-regular, as desired.
\end{proof}

The rest of the section consists of proofs of the various analytic results used throughout the paper. First, we need the following simple inequality.

\begin{lem}[Multiplicative Jensen inequality]\label{multjensen}
	Suppose $0<a<b$ 
	%are real numbers 
	and $x_1,\ldots,x_k \in (a,b)$. Let $f: (a,b) \to \R$ be a function such that $y \mapsto f(e^y)$ is strictly convex on the interval $(\log a,\log b)$. Then, for any $z \in (a^k,b^k)$, subject to the constraint $\prod_{i=1}^k x_i=z$,
	\[
		\frac 1k \sum_{i=1}^k f(x_i)
	\]
	is minimized when all the $x_i$ are equal (and thus equal to $z^{1/k}$).
\end{lem}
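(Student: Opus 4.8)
The plan is to reduce the claim to the ordinary (strict) Jensen inequality by the logarithmic substitution $y_i=\log x_i$. Concretely, define $g\colon(\log a,\log b)\to\R$ by $g(y):=f(e^y)$; by hypothesis $g$ is strictly convex on $(\log a,\log b)$. For $x_1,\dots,x_k\in(a,b)$ set $y_i=\log x_i\in(\log a,\log b)$. Then the constraint $\prod_{i=1}^k x_i=z$ becomes the linear constraint $\sum_{i=1}^k y_i=\log z$, while $\frac1k\sum_{i=1}^k f(x_i)=\frac1k\sum_{i=1}^k g(y_i)$, so minimizing the left-hand side under a multiplicative constraint is exactly minimizing a convex average under an additive constraint.

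First I would check that the candidate minimizer is admissible: since $z\in(a^k,b^k)$ we have $z^{1/k}\in(a,b)$, so the point $x_1=\cdots=x_k=z^{1/k}$, equivalently $y_1=\cdots=y_k=\tfrac{\log z}{k}$, lies in the domain. Next, applying Jensen's inequality to the convex function $g$ and the points $y_1,\dots,y_k$ each with weight $\tfrac1k$ gives
\[
	\frac1k\sum_{i=1}^k g(y_i)\ \geq\ g\!\left(\frac1k\sum_{i=1}^k y_i\right)\ =\ g\!\left(\frac{\log z}{k}\right)\ =\ f\!\left(z^{1/k}\right),
\]
so $\frac1k\sum_{i=1}^k f(x_i)\geq f(z^{1/k})$, and this lower bound is attained at the all-equal point. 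Finally, since $g$ is \emph{strictly} convex, equality in Jensen's inequality forces $y_1=\cdots=y_k$, hence $x_1=\cdots=x_k=z^{1/k}$; thus the minimum is attained only there, which is the assertion of the lemma.

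There is essentially no obstacle in this argument: the only points needing a word of care are that $z^{1/k}\in(a,b)$ so the minimizer is feasible, and that strict convexity of $g$ is what pins down the equality case. The entire content of the lemma is the observation that composing with $\exp$ turns the multiplicative side condition into an additive one, after which the standard strict Jensen inequality applies verbatim.
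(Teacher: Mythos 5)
Your proof is correct and follows the same route as the paper's: substitute $y_i=\log x_i$ to turn the multiplicative constraint into an additive one, then apply Jensen's inequality to the strictly convex function $f\circ\exp$. Your version just spells out the feasibility of $z^{1/k}$ and the equality case in slightly more detail.
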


\begin{proof}
	Define new variables $y_1,\ldots,y_k$ by $y_i=\log x_i$, so that $\sum y_i=\log z$. We now apply Jensen's inequality to the strictly convex function $f \circ {\exp}$, which says that subject to the constraint $\sum y_i=\log z$, $\sum_{i=1}^k f(e^{y_i})$ is minimized when all the $y_i$ are equal. This is equivalent to the desired result.
\end{proof}

\begin{proof}[Proof of Lemma \ref{xilemma}]
	Our proof follows the proof of \cite[Lemma 8]{Conlon}, though we give considerably more detail, particularly for small values of $k$. Set $z=\prod_{i=1}^k x_i$ and assume for the moment that every $x_i$ is in $(\frac 1k, 1)$. We will show that for every fixed $z$, the inequality is true. For this, we first claim that $\varphi: y \mapsto (1-e^y)^k$ is strictly convex on $(\log \frac 1k,0)$. To see this, note that
	\begin{align*}
		\varphi''(y)&
		=ke^y(1-e^y)^{k-2}(ke^y-1).
	\end{align*}
	For $y \in (\log \frac 1k,0)$, we have that $e^y \in (\frac 1k,1)$, so that $\varphi''(y)$ is strictly positive and $\varphi$ is strictly convex. 
	
	Therefore, by Lemma \ref{multjensen}, we get that subject to the constraint $\prod_{i=1}^k x_i =z$, the function $\frac 1k \sum_{i=1}^k (1-x_i)^k$ is minimized when all the $x_i$ are equal to $z^{1/k}$. Thus, 
	\begin{align*}
		\prod_{i=1}^k x_i + \frac 1k \sum_{i=1}^k (1-x_i)^k \geq z+(1-z^{1/k})^k=:\psi(z).
	\end{align*}
	We now claim that for all $z \in (0,1)$, $\psi(z) \geq 2^{1-k}$. To see this, note that
	\begin{align*}
		\psi'(z)&=1-(1-z^{1/k})^{k-1}z^{-\frac{k-1}{k}}.
	\end{align*}
	Setting this equal to zero and taking $(k-1)$th roots, we get that $\psi'(z)=0$ implies that
	\[
		1-z^{1/k}=z^{1/k}.
	\]
	Thus, the only critical point of $\psi$ in $(0,1)$ is at $z^{1/k}=\frac 12$ or, equivalently, $z=2^{-k}$, where we have $\psi(z)=2^{1-k}$. On the other hand, $\psi(0)=\psi(1)=1$, so this critical point must be the minimum of $\psi$ on the interval $(0,1)$, proving the desired claim.

	Thus, we have proven the lemma under the assumption that $x_i \in (\frac 1k, 1)$ for all $i$. By continuity, we can get the same result under the assumption that $x_i \in [\frac 1k, 1]$ for all $i$. So now suppose that $0 \leq x_j< \frac 1k$ for some $j$. If $k \geq 5$, then 
	\[
		\frac 1k \sum_{i=1}^k (1-x_i)^k \geq \frac 1k {(1-x_j)^k}\geq \frac 1k \left( 1- \frac 1k \right) ^k >2^{1-k},
	\]
	so we only need to check the result in the cases $k=2$, $3$, and $4$.
	
	\vspace{3mm}
	\noindent
		\textbf{Case $k=2$:} We may write
		\begin{align*}
			x_1 x_2+ \frac12 ({(1-x_1)^2+(1-x_2)^2})&=\frac 12+ \frac 12\left( 1-2x_1-2 x_2+2x_1 x_2+x_1^2+x_2^2 \right) \\
			&=\frac 12 + \frac{1}{2}(1-x_1-x_2)^2 \\
			& \geq \frac 12=2^{1-k}.
		\end{align*}
		
        \vspace{3mm}
	    \noindent
		\textbf{Case $k=3$:} The function we are trying to minimize is
		\[
			F(x_1,x_2,x_3)=x_1 x_2 x_3+ \frac 13 ((1-x_1)^3+(1-x_2)^3+(1-x_3)^3).
		\]
		To minimize this function, we will find its critical points. Its partial derivatives are
		\begin{align*}
			\pardiff F{x_i}&= \prod_{j \neq i} x_j-(1-x_i)^2.
		\end{align*}
		If we set each of these three equations equal to zero, it is tedious but straightforward to verify that the only solution in $(0,1)^3$ is at $(\frac 12, \frac 12, \frac 12)$. Thus, $F$ is either minimized at that point (where its value is $1/4=2^{1-k}$) or else on the boundary of the cube. So we may assume that one of the $x_i$, say $x_3$, is in $\{0,1\}$. If $x_3 = 0$, we have that
		\[
			F(x_1,x_2,0)=\frac 13 (1+(1-x_1)^3+(1-x_2)^3)
		\]
		and since this function is monotonically decreasing in both $x_1$ and $x_2$, it is minimized when $x_1=x_2=1$, where its value is $1/3$, which is larger than $1/4$. For the other case, where $x_3=1$, we write
		\begin{align*}
			G(x_1,x_2)=F(x_1,x_2,1)&=x_1 x_2+\frac 13 ((1-x_1)^3+(1-x_2)^3).
		\end{align*}
		The partial derivatives of $G$ are 
		\[
			\pardiff G{x_1}=x_2-(1-x_1)^2 \qquad \qquad \pardiff{G}{x_2}=x_1-(1-x_2)^2
		\]
		and setting these both equal to zero, we find that the only critical point of $G$ in $(0,1)^2$ is when $x_1=x_2=\frac 12 (3-\sqrt 5)$. But at this point the value of $G$ is approximately $0.303$, which is larger than $1/4$. So it again suffices to check the boundary case, when one of the variables, say $x_2$, is in $\{0,1\}$. But if $x_2=0$, then
		\[
			G(x_1,0)=\frac 13(1+(1-x_1)^3) \geq \frac 13
		\]
		and if $x_2=1$, then
		\[
			G(x_1,1)=x_1+\frac13 (1-x_1)^3=\frac 13+x_1^2- \frac{x_1^3}{3}\geq \frac 13+\frac{2 x_1^2}{3} \geq \frac 13,
		\]
		where we used that $x_1^3 \leq x_1^2$ for $x_1 \in [0,1]$. 

		\vspace{3mm}
	    \noindent
		\textbf{Case $k=4$:} Here, our function is
		\[
			F(x_1,x_2,x_3,x_4)=x_1 x_2 x_3 x_4+\frac 14 ((1-x_1)^4+(1-x_2)^4+(1-x_3)^4+(1-x_4)^4).
		\]
		To minimize, we again consider the partial derivatives
		\begin{align*}
			\pardiff F{x_i}= \prod_{j \neq i} x_j - (1-x_i)^3.
		\end{align*}
		Setting each equation equal to zero and multiplying by its $x_i$, we get that
		\[
			x_1(1-x_1)^3=x_2(1-x_2)^3=x_3(1-x_3)^3=x_4(1-x_4)^3=x_1 x_2 x_3 x_4.
		\]
		Setting $z=x_1 x_2 x_3 x_4$, this system of equations tells us that, for all $i$,
		\[
			x_i(1-x_i)^3=z.
		\]
		Observe that the function $x \mapsto x(1-x)^3$ is $0$ at both $0$ and $1$ and has a unique local maximum in the interval $(0,1)$. This implies that, for any fixed $z$, the equation $x(1-x)^3=z$ has at most two solutions for $x \in [0,1]$. Thus, we find that all four $x_i$ can take on at most two values at the critical point. Call these values $a$ and $b$. We now split into cases depending on how many $x_i$ take on each value. 

		If all the $x_i$ are equal, say to $a$, then we are done. Indeed, in that case $z=a^4$ and all our equations become
		\[
			a(1-a)^3=a^4,
		\]
		which implies that $a=1-a$, i.e.,\ $a=\frac 12$, and the only critical point with all its coordinates equal is $(\frac 12, \frac 12, \frac 12, \frac 12)$. 

		Next, suppose that $x_1=a$, while $x_2=x_3=x_4=b$. Then $z=ab^3$ and our equations become
		\[
			a(1-a)^3=ab^3=b(1-b)^3.
		\]
		From the first equation, we find that $(1-a)^3=b^3$ and thus that $a+b=1$. Plugging this into the second equation, we find that 
		\[
			ab^3=b(1-b)^3=ba^3
		\]
		and thus $a^2=b^2$, so that $a=b$. Combining this with $a+b=1$, we find that $a=b=\frac 12$, returning us to the previous case. 

		Finally, the remaining case is where $x_1=x_2=a$, while $x_3=x_4=b$. Then $z=a^2 b^2$ and our equations are
		\[
			a(1-a)^3=a^2b^2=b(1-b)^3.
		\]
		Multiplying the two equations together and dividing out by $ab$ gives
		\[
			(1-a)^3(1-b)^3=a^3 b^3
		\]
		and thus $(1-a)(1-b)=ab$. Expanding out the left-hand side and rearranging tells us that $a+b=1$. Therefore, our equations become
		\[
			ab^3=a^2 b^2=ba^3,
		\]
		which implies that $a=b$. This and $a+b=1$ imply that $a=b=\frac 12$, again yielding our previously found critical point. Therefore, the unique critical point of $F$ in $(0,1)^4$ is at $(\frac 12, \frac 12, \frac 12, \frac 12)$.
		
		To conclude, we again need to check what happens on the boundary. So suppose that some variable, say $x_4$, is in $\{0,1\}$. If $x_4=0$, then
		\[
		    F(x_1,x_2,x_3,0)=\frac 14 (1+(1-x_1)^4+(1-x_2)^4+(1-x_3)^4).
		\]
		This function is monotonically decreasing in $x_1,x_2,x_3$, so it is minimized when $x_1=x_2=x_3=1$, at which point we have
		\[
		    F(1,1,1,0)=\frac 14(1+0+0+0)=\frac 14>\frac 18=F \left(\frac 12, \frac12, \frac 12, \frac 12\right).
		\]
		On the other hand, if $x_4=1$, then we can define
		\[
	    G(x_1,x_2,x_3)=F(x_1,x_2,x_3,1)=x_1 x_2 x_3 +\frac 14((1-x_1)^4+(1-x_2)^4+(1-x_3)^4).
		\]
		Solving for where the gradient of $G$ is $0$, we find that the only such point in $(0,1)^3$ is approximately $(0.43,0.43,0.43)$, where the value of $G$ is approximately $0.159$, which is more than $1/8$. So the only way the value of $F$ can be smaller than $1/8$ on the boundary is if another variable, say $x_3$, is also on the boundary. If $x_3=0$, then we have
		\[
		F(x_1,x_2,0,1)=\frac 14(1+(1-x_1)^4+(1-x_2)^4),
		\]
		which is minimized when $x_1=x_2=1$, where its value is $1/4>1/8$. So we may suppose $x_3=x_4=1$ and define
		\[
		H(x_1,x_2)=F(x_1,x_2,1,1)=x_1 x_2+\frac 14((1-x_1)^4+(1-x_2)^4).
		\]
		Its gradient equals zero in $(0,1)^2$ only at approximately $(0.32,0.32)$, where its value is approximately $0.209>1/8$. So the only remaining case is the boundary, when $x_2 \in \{0,1\}$. As above, if $x_2=0$, then $H(x_1,0)\geq 1/4>1/8$, so this case is not a problem. If $x_2=x_3=x_4=1$, then
		\[
		F(x_1,1,1,1)=x_1+\frac{(1-x_1)^4}4,
		\]
		which is minimized at $x_1=0$, where its value is $1/4>1/8$. Thus, having checked the boundary of $[0,1]^4$, we can conclude that the unique minimum of $F$ on $[0,1]^4$ is at $(\frac 12, \frac 12,\frac 12,\frac 12)$.
		\end{proof}
	
In order to prove Lemma \ref{stablexilemma}, we will use H\"older's defect inequality, which can be thought of as a stability version of Jensen's inequality. The statement is as follows.
\begin{thm}[H\"older's Defect Formula, {\cite[Problem 6.5]{Steele}}]\label{holderdefect}
	Suppose $\varphi:[a,b] \to \R$ is a twice-differentiable function with $\varphi''(y) \geq m > 0$ for all $y \in (a,b)$. For any $y_1,\ldots,y_k \in [a,b]$, let
	\[
		\mu = \frac 1k \sum_{i=1}^k y_i \qquad \text{ and } \qquad \sigma^2 = \frac 1k \sum_{i=1}^k (y_i-\mu)^2
	\]
	be the empirical mean and variance of $\{y_1,\ldots,y_k\}$. Then
	\[
		\frac 1k \sum_{i=1}^k \varphi(y_i) - \varphi (\mu) \geq \frac{m\sigma^2}{2}.
	\]
\end{thm}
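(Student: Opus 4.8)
The plan is to deduce this stability statement from the ordinary Jensen inequality by subtracting off a suitable quadratic. Set $\psi(y) = \varphi(y) - \frac m2 y^2$ on $[a,b]$. Since $\varphi$ is twice differentiable with $\varphi''(y) \geq m$ for all $y \in (a,b)$, the function $\psi$ is twice differentiable with $\psi''(y) = \varphi''(y) - m \geq 0$ there, so $\psi$ is convex on the closed interval $[a,b]$.

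Next I would apply the finite form of Jensen's inequality to $\psi$ with uniform weights $1/k$ on the points $y_1,\dots,y_k \in [a,b]$, whose average $\mu$ again lies in $[a,b]$. This gives $\frac 1k \sum_{i=1}^k \psi(y_i) \geq \psi(\mu)$, which, after unwinding the definition of $\psi$, becomes
\[
	\frac 1k \sum_{i=1}^k \varphi(y_i) - \varphi(\mu) \geq \frac m2\left( \frac 1k \sum_{i=1}^k y_i^2 - \mu^2 \right).
\]
Finally, I would invoke the elementary identity $\frac 1k \sum_{i=1}^k y_i^2 - \mu^2 = \frac 1k \sum_{i=1}^k (y_i-\mu)^2 = \sigma^2$, obtained by expanding the square and using $\frac 1k \sum_i y_i = \mu$, to conclude that the left-hand side is at least $\frac{m\sigma^2}{2}$, as desired.

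There is no real obstacle here; the argument is two lines once the reformulation is in place. The only points requiring any care are that Jensen's inequality is being used in its finite equal-weights form (immediate by induction from the two-point inequality defining convexity) and that $\varphi''$ is assumed bounded below only on the open interval $(a,b)$, which is nonetheless enough to force $\psi'' \geq 0$ on $(a,b)$ and hence convexity of $\psi$ on $[a,b]$ by continuity. An equally short alternative would be to Taylor-expand $\varphi$ about $\mu$ with Lagrange remainder, $\varphi(y_i) = \varphi(\mu) + \varphi'(\mu)(y_i-\mu) + \tfrac12 \varphi''(\xi_i)(y_i-\mu)^2$ with $\xi_i$ between $\mu$ and $y_i$, bound $\varphi''(\xi_i) \geq m$, average over $i$, and observe that the linear term vanishes in the average; I would present whichever of the two is cleaner in context.
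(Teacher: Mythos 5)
Your proof is correct. Note that the paper does not prove this statement at all: it is quoted as a known result with a citation to Steele's book, so there is no in-paper argument to compare against. Your first argument---observing that $\psi(y)=\varphi(y)-\tfrac m2 y^2$ has $\psi''\geq 0$ on $(a,b)$, hence is convex on $[a,b]$ by continuity, and then applying finite Jensen and the identity $\tfrac1k\sum_i y_i^2-\mu^2=\sigma^2$---is exactly the standard proof of H\"older's defect formula, and your Taylor-expansion alternative is equally sound (the intermediate points $\xi_i$ lie strictly between $\mu$ and $y_i$, hence in $(a,b)$ whenever $y_i\neq\mu$, and the terms with $y_i=\mu$ contribute nothing, so the bound $\varphi''(\xi_i)\geq m$ is always legitimately applied). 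Either version would serve as a complete, self-contained proof of the cited result.
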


\begin{proof}[Proof of Lemma \ref{stablexilemma}]
	To prove this, it suffices to show that for every $k \geq 3$, the point $(\frac 12, \ldots,\frac 12)$ is the unique global minimum of the function
	\[
		F(x_1,\ldots,x_k)=\prod_{i=1}^k x_i+\frac 1k \sum_{i=1}^k (1-x_i)^k.
	\]
	For $k=3$ and $4$, we explicitly checked this in the proof of Lemma \ref{xilemma}. So from now on, we may assume that $k \geq 5$. 

	To apply Theorem \ref{holderdefect}, we need to get a uniform lower bound on $\varphi''(y)$, where $\varphi(y)=(1-e^y)^k$, as in the proof of Lemma \ref{xilemma}. To obtain this uniform lower bound we will need to restrict to a subinterval, and specifically a subinterval of $(\log \frac 1k,0)$, where $\varphi''(y) > 0$. We first deal with the case where one of the variables is not in such a subinterval. 

	Suppose first that $x_j \leq \frac {1+\xi}k$ for some $j$, where $\xi>0$ is some small constant. Then  
	\[
		\prod_{i=1}^k x_i +\frac 1k \sum_{i=1}^k (1-x_i)^k \geq \frac 1k (1-x_j)^k \geq \frac 1k \left( 1-\frac {1+\xi}k \right) ^k.
	\]
	For $k \geq 5$ and $\xi$ sufficiently small, this last term is larger than $2^{1-k}$. Therefore, as long as 
	\[
		\delta_0 \leq \frac 1k \left( 1-\frac {1+\xi}k \right) ^k -2^{1-k},
	\]
	we get the desired result whenever one of the $x_j$ is at most $\frac{1+\xi}k$. Thus, from now on, we may assume that $x_j \geq \frac{1+\xi}k$ for all $j$.
	
	Next, suppose that one of the $x_j$ is equal to $1$, say $x_k=1$. We can compute
	\[
	    \left.\pardiff F{x_k}\right|_{x_k=1}=\left[\prod_{i=1}^{k-1} x_i - (1-x_k)^{k-1}\right]_{x_k=1}=\prod_{i=1}^{k-1} x_i\geq \left( \frac{1+\xi}{k}\right)^{k-1}>0.
	\]
	Therefore, in a neighborhood of the region where $x_k=1$, we have that $F$ is a strictly increasing function of $x_k$. Now suppose that there exist some $x_1,\ldots,x_{k-1} \geq \frac{1+\xi}{k}$ so that $F(x_1,\ldots,x_{k-1},1)=2^{1-k}$. Then, by decreasing $x_k$ from $1$ to some number slightly smaller than $1$, we could decrease the value of $F$. This would then allow us to decrease the value of $F$ to below $2^{1-k}$, contradicting Lemma \ref{xilemma}. Therefore, whenever $x_k=1$, the value of $F$ must be strictly larger than $2^{1-k}$. By symmetry, the same holds if any coordinate is $1$. Since the boundary of the cube is compact, we find that $F(x_1,\ldots,x_k) \geq 2^{1-k}+2 \delta_0$, for some $\delta_0>0$, whenever one of the $x_j$ equals $1$. By continuity of $F$, this also implies a lower bound of $2^{1-k}+\delta_0$ in some neighborhood of the boundary. Thus, for $\xi'$ sufficiently small, we get the desired result if any of the $x_j$ is larger than $1-\xi'$.

	We have therefore established the desired bound if some $x_j$ is either less than $\frac{1+\xi}{k}$ or greater than $1-\xi'$, for some constants $\xi$ and $\xi'$ depending only on $k$. Inside the interval $(\frac{1+\xi}{k},1- \xi')$, we have a uniform lower bound on $\varphi''$, so, by the H\"older defect formula, Theorem \ref{holderdefect}, we also obtain the desired result in this interval. 
\end{proof}

\end{document}